\documentclass[a4paper, 11pt]{article}

\usepackage[small]{titlesec}
\usepackage{tikz-cd}
\usepackage{booktabs}
\usepackage[T1]{fontenc}
\usepackage[utf8]{inputenc}
\usepackage[english]{babel}
\usepackage[a4paper,top=4cm,bottom=3cm,left=2.5cm,right=2.5cm]{geometry}
\usepackage{amsfonts}
\usepackage{mathtools}
\usepackage{amsmath}
\usepackage{xcolor}

\usepackage{lmodern}
\usepackage{empheq}
\usepackage{amsthm}
\usepackage{amssymb}
\usepackage{booktabs}
\usepackage{caption}
\usepackage{siunitx}
\usepackage{float}
\usepackage{dsfont}
\usepackage{amssymb}
\usepackage{graphicx}
\usepackage{stmaryrd}

\usepackage{color}
\usepackage{pifont}%
\usepackage{enumerate}
\usepackage{enumitem}
\setlist[description]{leftmargin=\parindent,labelindent=\parindent}
\usepackage{listings} 
\usepackage{amsmath}
\usepackage{amscd}
\usepackage{braket}
\usepackage{tikz}
\usepackage{calc}
\usepackage[hidelinks]{hyperref}
\usepackage{resmes}
\usepackage{mathtools}

\newcommand{\R}{\mathbb{R}}
\newcommand{\sph}{\mathbb{S}}
\newcommand{\K}{\mathcal{K}}

\newcommand{\haus}{\mathcal{H}}

\newcommand{\tra}{\mathrm{tr}}
\newcommand{\Id}{\mathrm{Id}}
\newcommand{\rhobar}{\overline{\rho}}

\newtheorem{theorem}{Theorem}[section]
\newtheorem*{theorem*}{Theorem}
\newtheorem{corollary}[theorem]{Corollary}
\newtheorem{proposition}[theorem]{Proposition}
\newtheorem{lemma}[theorem]{Lemma}

\theoremstyle{definition}

\newcommand*\sq{\mathbin{\vcenter{\hbox{\rule{.3ex}{.3ex}}}}}

\usepackage{pgfplots}
\pgfplotsset{compat=1.17}

\newcommand{\Conv}{\mathrm{Conv}}
\newcommand{\vol}{\mathrm{vol}}

\newcommand{\epi}{\mathrm{epi}}

\newcommand{\di}{\, \mathrm{d}}

\newcommand{\F}{\mathcal{S}}

\usepackage{tikz-cd} 

\newcommand\blfootnote[1]{%
	\begingroup
	\renewcommand\thefootnote{}\footnote{#1}%
	\addtocounter{footnote}{-1}%
	\endgroup
}
\title{A new infinitesimal form of the Pr\'ekopa--Leindler inequality with multiplicative structure and applications}
\author{Sotiris Armeniakos and Jacopo Ulivelli}
\date{}

\begin{document}
	\maketitle
	
	\begin{abstract}
		By differentiating a concavity principle arising from the Prékopa--Leindler inequality, we obtain a statement simultaneously strengthening the weighted boundary Poincar\'e inequality and the Brascamp--Lieb variance inequality. The resulting inequality possesses a multiplicative structure, which we exploit to develop an alternative to the (by now classical) $L_2$ method in the study of geometric and analytic inequalities. We apply this approach to derive a stability estimate for the weighted Poincar\'e inequality and to investigate the dimensional Brunn--Minkowski conjecture. In particular, in the latter setting, we obtain new reformulations together with several partial results.
		\blfootnote{
			MSC 2020 Classification: 52A40, 52A20, 28C20, 46N20, 47F10.\\
			Keywords: Pr\'ekopa--Leindler inequality, convex bodies, Wulff shapes, log-concave measures, elliptic PDEs.}
	\end{abstract}

\section{Introduction}

Our investigation begins with the celebrated Pr\'ekopa--Leindler inequality \cite{Leindler, Prekopa}. Consider three integrable functions $f,g,h: \R^n \to [0,\infty)$ such that, for every $t \in [0,1]$, 
\begin{equation*}
    h((1-t)x+ty) \geq f(x)^{(1-t)}g(y)^t \quad \text{ for every } x,y \in \R^n.
\end{equation*}
Then, 
\begin{equation}\label{eq:PL_1}
    \int_{\R^n} h \di x \geq \left( \int_{\R^n} f \di x \right)^{(1-t)} \left( \int_{\R^n} g \di x \right)^t \quad \text{ for every } t \in [0,1],
\end{equation}
where integration in $\di x$ is understood with respect to the Lebesgue measure. This inequality is deeply connected to the Brunn--Minkowski inequality, of which it can be considered the functional analogue. See Gardner's survey \cite{Gardner_BM} for more details on these two inequalities, and Schneider's monograph \cite{SchneiderConvexBodiesBrunn2013} for a thorough introduction to the Brunn--Minkowski theory. 

The first immediate corollary of \eqref{eq:PL_1} is that, thanks to a well-known classification result by Borell \cite{Borell}, a measure $\mu$ on $\R^n$ such that $\di \mu(x)=e^{-u(x)}\di x$, where $u:\R^n \to \R \cup \{\infty\}$ is a lower semi-continuous convex function, is log-concave. That is, \begin{equation}\label{eq:log_concave_measure} \mu((1-t)K+tL) \geq \mu(K)^{(1-t)}\mu(L)^t \end{equation} for every non-empty and compact convex sets $K,L \in \R^n$ and $t \in [0,1]$. The set-operation $+$ denotes the usual Minkowski addition (see, for example, \cite[Section 3]{SchneiderConvexBodiesBrunn2013}). If $\mu$ is chosen to be the Lebesgue measure on $\R^n$, \eqref{eq:log_concave_measure} is an equivalent form of the Brunn--Minkowski inequality.

A deeper consequence of the log-concavity property \eqref{eq:log_concave_measure} is the validity of certain Poincar\'e-type inequalities on the boundaries of convex sets. As before, suppose that $\di \mu(x)=e^{-u(x)}\di x$ where $u:\R^n \to \R \cup \{\infty\}$ is a lower semi-continuous convex function. Assume in addition that $u\in C^1(\R^n)$ and consider a compact convex set $K \subset \R^n$ whose boundary $\partial K$ is a strictly convex $C^2$ hypersurface. We shall say, with a slight abuse of terminology, that $\partial K$ is strictly convex whenever its second fundamental form ${\rm II}$ is positive definite. Then, for  every locally Lipschitz function $\rho:\partial K\to\R$, the following inequality holds:
\begin{equation}\label{eq:Poincare}
    \int_{\partial K} {\rm H}_\mu \rho^2 \di \mu-\frac{1}{\mu(K)}\left(\int_{\partial K} \rho \di \mu\right)^2\leq\int_{\partial K} \langle {\rm II}^{-1} \nabla_{\partial K} \rho, \nabla_{\partial K} \rho \rangle \di \mu .
\end{equation}
Here, $\nabla_{\partial K}$ denotes the tangential gradient, and $\langle\cdot,\cdot\rangle$ the restriction of the Euclidean scalar product to the tangent bundle of $\partial K$. With a slight abuse of notation, we use the same symbol for the scalar product on the whole space. The weighted mean curvature is given by ${\rm H}_\mu=\tra({\rm II})-\langle \nabla u,\nu_K\rangle$, where $\nu_K$ denotes the outer unit normal to $\partial K$. Integration on $\partial K$ with respect to $\mu$ is understood as integration against the $(n-1)$-dimensional Hausdorff measure weighted by $e^{-u}$. An equivalent form of \eqref{eq:Poincare} was proved by Colesanti \cite{Colesanti_poincare} in the case of the Lebesgue measure, and extended to a much more general setting by Kolesnikov and Milman \cite{Kol_Mil_P}.

Another consequence of \eqref{eq:PL_1} we are interested in is the eponymous variance inequality proved by Brascamp and Lieb in \cite{Brascamp_Lieb_76}, which we later refer to as Brascamp--Lieb inequality, for short. Consider $u \in C^2(\R^n)$ such that the Hessian matrix of $u$, which we denote by $\nabla^2 u$, is positive definite. Then, for every locally Lipschitz function $\varphi: \R^n \to \R$ and compact convex set $K \subset \R^n$ with non-empty interior, the inequality 
\begin{equation}\label{eq:Brascamp_Lieb}
    \int_K \varphi^2 \di \mu - \frac{1}{\mu(K)}\left( \int_K \varphi \di \mu\right)^2 \leq \int_K \langle (\nabla^2 u)^{-1} \nabla \varphi, \nabla \varphi\rangle \di \mu
\end{equation}
holds. A far-reaching generalization of \eqref{eq:Brascamp_Lieb} was obtained by Kolesnikov and Milman in \cite{Kol_Mil_BL}. 

\medskip

Both \eqref{eq:Poincare} and \eqref{eq:Brascamp_Lieb} can be obtained from \eqref{eq:PL_1} as infinitesimal forms. This process can be summarized as follows: Consider a function $\Phi: \R^n \times [0,1] \to \R \cup \{ \infty\}$ such that it is jointly convex. Then, \eqref{eq:PL_1} implies that  \[ \F(t)=\log \left(\int_{\R^n} e^{-\Phi(t,x)} \di x \right) \quad \text{ is concave for } t \in [0,1]. \] Statements of this kind are known as concavity principles for marginals. For more details on the topic, see, for example, the recent work of Cordero-Erausquin and Eskenazis \cite{cordero2025concavity}, which was of great inspiration in the preparation of this manuscript. If one proves that $\F$ is twice differentiable as $t \to 0^+$, the concavity of $\F$ can be equivalently stated as $\F''(0)\leq 0$. Writing explicitly $\F''(0)$ gives, for appropriate choices of $\Phi$, the inequalities \eqref{eq:Poincare} and \eqref{eq:Brascamp_Lieb}.

\medskip

In this work, after deducing a new infinitesimal form for \eqref{eq:PL_1}, we will apply this machinery to obtain a stability estimate for \eqref{eq:Poincare}. In turn, with the tools thus developed, we will show a series of applications in the direction of the dimensional Brunn--Minkowski conjecture (see later \eqref{eq:dim_BM}).

\paragraph{A new infinitesimal form of the Pr\'ekopa--Leindler inequality.} We have summarized above how the Pr\'ekopa--Leindler inequality can be differentiated to obtain two kinds of Poincar\'e-type inequalities. The first contribution of this paper is showing that \eqref{eq:Poincare} and \eqref{eq:Brascamp_Lieb} appear simultaneously (together with some new terms!) by differentiation of an appropriate choice of $\Phi$ in $\F$. Before stating this result, we introduce some new notation. Consider two Lipschitz functions $\rho_0, \rho_1 : \partial K \to \R$. Then, for $K$ and $u$ fixed and satisfying the previous hypotheses, \eqref{eq:Poincare} implies that the symmetric bilinear form 
\begin{equation}\label{eq:P}
     \langle \rho_0,\rho_1 \rangle_{\rm P}= \int_{\partial K} \langle {\rm II}^{-1} \nabla_{\partial K} \rho_0, \nabla_{\partial K} \rho_1 \rangle \di \mu - \int_{\partial K} {\rm H}_\mu \rho_0 \rho_1 \di \mu + \frac{1}{\mu(K)}\left(\int_{\partial K} \rho_0 \di \mu\right)\left(\int_{\partial K} \rho_1 \di \mu\right)
     \tag{P}
\end{equation}
is positive semi-definite. Similarly, for two locally Lipschitz functions $\varphi_0, \varphi_1: \R^n \to \R$, \eqref{eq:Brascamp_Lieb} implies that the symmetric bilinear form 
\begin{equation}\label{eq:BL}
    \langle \varphi_0, \varphi_1 \rangle_{\rm BL}= \int_K \langle (\nabla^2 u)^{-1} \nabla \varphi_0, \nabla \varphi_1\rangle \di \mu - \int_K \varphi_0 \varphi_1 \di \mu + \frac{1}{\mu(K)}\left(\int_K \varphi_0 \di \mu \right)\left( \int_K \varphi_1 \di \mu \right)
    \tag{BL}
\end{equation}
is positive semi-definite. 

Before presenting our first main result, we need a further bilinear form. Consider a Lipschitz function $\rho: \partial K \to \R$ and a locally Lipschitz function $\varphi: \R^n \to \R$. Under the regularity assumptions on $u$ and $K$ needed for \eqref{eq:Poincare} and \eqref{eq:Brascamp_Lieb}, we define 
\begin{equation}\label{eq:I}
    \langle \rho, \varphi \rangle_{\rm I} = \int_{\partial K} \rho \varphi \di \mu - \frac{1}{\mu(K)}\left( \int_{\partial K} \rho \di \mu \right) \left( \int_K \varphi \di \mu \right),
    \tag{I}
\end{equation}
which can be understood as an interaction term between \eqref{eq:Poincare} and \eqref{eq:Brascamp_Lieb} (justifying the choice of the subscript). Our first result, which takes the form of a Cauchy--Schwarz-type inequality, reads as follows.
\begin{theorem}\label{thm:multiplicative_form}
    Consider a non-empty compact convex set $K \subset \R^n$ such that $\partial K$ is a strictly convex manifold of class $C^2$, and a convex function $u \in C^2(\R^n)$ such that its Hessian matrix is positive definite. Then for every Lipschitz function $\rho: \partial K \to \R$ and every locally Lipschitz function $\varphi: \R^n \to \R$ the inequality 
    \begin{equation}\label{eq:multiplicative_form}
        \langle \rho, \varphi \rangle_{\rm I}^2 \leq \langle \rho, \rho \rangle_{\rm P} \langle \varphi, \varphi \rangle_{\rm BL}
    \end{equation}
    holds.
\end{theorem}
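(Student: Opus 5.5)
Since $\langle\cdot,\cdot\rangle_{\rm P}$ and $\langle\cdot,\cdot\rangle_{\rm BL}$ are positive semi-definite, \eqref{eq:multiplicative_form} is equivalent to the positive semi-definiteness of the quadratic form
\begin{equation*}
Q(\rho,\varphi)=\langle \rho,\rho\rangle_{\rm P}-2\langle \rho,\varphi\rangle_{\rm I}+\langle \varphi,\varphi\rangle_{\rm BL}
\end{equation*}
on pairs $(\rho,\varphi)$. Indeed, if $Q\ge 0$, replacing $(\rho,\varphi)$ by $(s\rho,s^{-1}\varphi)$ for $s>0$ (and, if needed, $\varphi$ by $-\varphi$) and optimizing in $s$ gives $|\langle\rho,\varphi\rangle_{\rm I}|\le \sqrt{\langle \rho,\rho\rangle_{\rm P}\langle\varphi,\varphi\rangle_{\rm BL}}$. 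The degenerate cases where one of the two forms vanishes follow by letting $s\to 0$ or $s\to\infty$. So the task reduces to showing $Q\ge0$, which I will obtain as $-\F''(0)\ge 0$ for a jointly convex $\Phi$ encoding both a boundary perturbation of $K$ and a bulk perturbation of $u$.

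The natural choice is to perturb simultaneously the set and the potential:
\begin{equation*}
\Phi(t,x)=u(x)-t\varphi(x)+\tfrac{t^2}{2}\psi(x)+ I_{K_t}(x),
\end{equation*}
where $K_t$ is a convex body whose support function is $h_K+t\,\rho\circ\nu_K^{-1}+O(t^2)$ (a Wulff-shape-type deformation), and $I_{K_t}$ is the convex indicator. For small $t$, joint convexity of $\{(t,x):x\in K_t\}$ holds by taking $K_t$ as the $t$-slices of a convex set in $\R^{n+1}$. This is possible for smooth $\rho$ since ${\rm II}>0$ and $\partial K\in C^2$; one can add a large multiple of $t^2$ to the support function, which does not affect the relevant first-order terms. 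Joint convexity of $u-t\varphi+\frac{t^2}{2}\psi$ holds on $K$ once $\psi=\langle(\nabla^2u)^{-1}\nabla\varphi,\nabla\varphi\rangle$ (plus $\varepsilon$), by the Schur complement of the $(n+1)\times(n+1)$ Hessian. By a first reduction I will assume $\rho,\varphi$ smooth and use density, since all forms are continuous in $W^{1,2}$-type norms. The concavity principle then gives $\F''(0)\le 0$ for $\F(t)=\log\int_{K_t}e^{-u+t\varphi-t^2\psi/2}\di x$.

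Next comes the computation of $\F''(0)=\frac{V''}{V}-\left(\frac{V'}{V}\right)^2$ with $V(t)=\int_{K_t}e^{-\Phi}$. The first variation gives $V'(0)=\int_{\partial K}\rho\di\mu+\int_K\varphi\di\mu$. The second variation splits into three pieces. The pure boundary part $\int_{\partial K}{\rm H}_\mu\rho^2-\langle{\rm II}^{-1}\nabla\rho,\nabla\rho\rangle$ arises exactly as in Colesanti's derivation of \eqref{eq:Poincare}, using the second-order term of the parametrization. The pure bulk part is $\int_K\varphi^2-\psi$, as in Brascamp--Lieb. The cross term $2\int_{\partial K}\rho\varphi\di\mu$ comes from the derivative of $e^{t\varphi}$ integrated against the moving boundary. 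Subtracting $(V')^2/V$ and multiplying by $-\mu(K)^{\,0}$ appropriately gives precisely $-\F''(0)\mu(K)=Q(\rho,\varphi)$, the cross term producing $-2\langle\rho,\varphi\rangle_{\rm I}$.

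I expect the main obstacle to be the rigorous construction of the jointly convex family $K_t$ with the prescribed normal velocity $\rho$, together with the second-order expansion of $\int_{K_t}$ to identify the boundary terms with the ${\rm II}^{-1}$ gradient and ${\rm H}_\mu$. Here I would work in support-function/Gauss-map coordinates, where $h_t=h_K+t\rho\circ\nu_K^{-1}+Ct^2$ is convex in $(t,x)$-homogeneous form for large $C$, and the extra $Ct^2$ term contributes $C\int_{\partial K}\di\mu$-type terms to be controlled. To remove them I would instead use the Legendre-type trick in Cordero-Erausquin--Eskenazis: choose the quadratic correction as the optimal one, in the same way $\psi$ was chosen, so the residual terms cancel exactly. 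The alternative is to let $C$ enter as an $\varepsilon$ after optimizing.
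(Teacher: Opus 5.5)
Your reduction to $Q\ge 0$ (this is Lemma~\ref{lemma:bilinear}) and your bulk perturbation are fine. The step you flag as the main obstacle, the construction of $K_t$, is resolved incorrectly, and as written the argument does not close.

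\textbf{Why the $t^2$-correction fails.} The slices $K_t$ form a convex subset of $\R^{n+1}$ if and only if $K_{(1-\lambda)t_0+\lambda t_1}\supseteq(1-\lambda)K_{t_0}+\lambda K_{t_1}$. In terms of support functions, this says $t\mapsto h_{K_t}(\xi)$ is \emph{concave} for every $\xi$. Adding $+Ct^2$ with $C$ large does the opposite. Take $K$ the unit ball and $\rho\equiv 0$, so $K_t=(1+Ct^2)K$. The points $(\pm\delta,(1+C\delta^2)e_1)$ lie in $\{(t,x):x\in K_t\}$, but their midpoint $(0,(1+C\delta^2)e_1)$ does not. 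So Pr\'ekopa's principle cannot be invoked.

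Moreover, the $O(t^2)$ part of $h_{K_t}$ is not harmless. A term $t^2g$ leaves $V'(0)$ unchanged but adds $2\int_{\partial K}g(\nu_K)\di\mu$ to $V''(0)$. With the sign that does restore convexity, $-Ct^2$, you would only get $Q(\rho,\varphi)\ge -2C\int_{\partial K}\di\mu$, which is empty for large $C$. ``Letting $C$ play the role of $\varepsilon$'' contradicts the need for $C$ large, and the appeal to a Legendre-type trick is not an argument.

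\textbf{The missing observation: no correction is needed.} Take $f=\rho\circ\nu_K^{-1}\in C^2(\sph^{n-1})$. The set $\{(t,x):\langle x,\xi\rangle\le h_K(\xi)+tf(\xi)\ \forall\,\xi\in\sph^{n-1}\}$ is an intersection of half-spaces of $\R^{n+1}$. Equivalently, $I_{[h_K+tf]}=(h_K+tf)^*$ is a supremum of functions affine in $(t,x)$; this is exactly how Lemma~\ref{lemma:concavity_principle_PL} proceeds. Because ${\rm II}>0$, $h_K+tf$ is itself a support function for small $|t|$. Hence $K_t=[h_K+tf]$ has normal velocity exactly $\rho$, and the second variation along it (Lemma~\ref{lemma:malaka_vector_field}, Proposition~\ref{prop:explicit_derivatives}) is exactly $\int_{\partial K}({\rm H}_\mu\rho^2-\langle{\rm II}^{-1}\nabla_{\partial K}\rho,\nabla_{\partial K}\rho\rangle)\di\mu$, with no residual term.

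\textbf{Comparison once fixed.} With this correction your proof goes through and differs from the paper only in the bulk. You use $u-t\varphi+\frac{t^2}{2}\psi$, with $\psi$ smooth and slightly above $\langle(\nabla^2u)^{-1}\nabla\varphi,\nabla\varphi\rangle$; smoothing is needed because $u$ is only $C^2$. Joint convexity for small $|t|$ then follows from the Schur complement on a compact neighbourhood of $K$, and the extra $\varepsilon\mu(K)$ is sent to $0$. This needs only local Hessian bounds. The paper instead uses $(u^*+t\psi)^*$. It is jointly convex for free and agrees with your quadratic to second order when $\varphi=\psi\circ\nabla u$, at the price of the global pinching $k_1\Id\le\nabla^2u\le k_2\Id$ in the reduction step.
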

We will infer \eqref{eq:multiplicative_form} from another statement (see Theorem \ref{thm:mean_form}) which, under the same assumptions of Theorem \ref{thm:multiplicative_form}, reads as \[ \langle \rho, \varphi \rangle_{\rm I} \leq \frac{\langle \rho, \rho \rangle_{\rm P} + \langle \varphi, \varphi \rangle_{\rm BL}}{2}.\] From this statement it is immediate to derive \eqref{eq:Poincare} and \eqref{eq:Brascamp_Lieb} choosing $\varphi \equiv 0$ and $\rho \equiv 0$, respectively. The inequality \eqref{eq:multiplicative_form}, instead, does not look like a direct generalization of \eqref{eq:Poincare} and \eqref{eq:Brascamp_Lieb}. However, it yields a sharper lower bound. Indeed, once we know that $\langle \cdot ,\cdot \rangle_{\rm P} \geq 0 $ we obtain, for example, $\langle\varphi,\varphi\rangle_{\rm BL} \geq \langle \rho,\varphi\rangle_{\rm I}^2/\langle \rho,\rho\rangle_{\rm P}$ for every choice of $\rho$ such that $\langle \rho,\rho\rangle _{\rm P} \neq 0$, which improves the non-negativity implied by \eqref{eq:Brascamp_Lieb}. We shall see in the sequel that any $\rho\not\equiv 0$ does satisfy $\langle \rho,\rho \rangle_{\rm P} >0$. That is, $\langle \cdot , \cdot \rangle_{\rm P}$ is positive definite. On the other hand, Livshyts \cite{GalynaStability} showed that when the convex set $K$ in \eqref{eq:Brascamp_Lieb} is compact, $\langle \varphi, \varphi \rangle_{\rm BL} =0$ if and only if $\varphi$ is constant, and we may infer a similar improvement of the Poincar\'e inequality testing \eqref{eq:multiplicative_form} against any non-constant function $\varphi$. We do not investigate the equality cases of \eqref{eq:multiplicative_form}. Nevertheless, in Proposition~\ref{prop:equality}  we show that this new inequality admits additional equality cases beyond those implied by \eqref{eq:Poincare} and \eqref{eq:Brascamp_Lieb}.

The proof of Theorem \ref{thm:multiplicative_form} is based on a perturbative argument developed by the second-named author in \cite{Ulivelli_Wulff}, exploiting the connection between Wulff shapes of convex bodies and the Fenchel--Legendre transform of convex functions. The perturbations thus obtained allow the simultaneous treatment of \eqref{eq:Poincare} and \eqref{eq:Brascamp_Lieb}, highlighting the similarities between the two inequalities. This method has already found applications to geometric inequalities in \cite{MussnigUlivelli_inequalities}, by Mussnig and the second-named author.

\paragraph{Stability estimates and the dimensional Brunn--Minkowski conjecture.}After obtaining Theorem \ref{thm:multiplicative_form}, in the remainder of this manuscript, we focus on a series of applications. First, we shall prove a stability estimate for \eqref{eq:Poincare}. In turn, the results thus obtained will provide the background for some surprising applications to the dimensional Brunn--Minkowski conjecture. Before proceeding further with the exposition, we report some brief historical context for the topic. Further details can be found in the references in the following paragraph.

\medskip 

The Poincar\'e inequality \eqref{eq:Poincare} and the Brascamp--Lieb inequality \eqref{eq:Brascamp_Lieb}, together with suitable variants, have been the subject of intensive research in the last couple of decades. As mentioned above, Colesanti \cite{Colesanti_poincare} obtained a non-weighted version of \eqref{eq:Poincare}. In the same period, Bobkov and Ledoux \cite{Bobkvo_Ledoux, Bobkov_Ledoux_weight} generalized \eqref{eq:Brascamp_Lieb} to $\beta$-concave functions computing an infinitesimal form of the Borell--Brascamp--Lieb inequality (compare \cite{Borell, Brascamp_Lieb_76}), a generalization of the Pr\'ekopa--Leindler inequality \eqref{eq:PL_1}. We point out that our methods can be easily extended to obtain a generalization of Theorem \ref{thm:multiplicative_form} stemming from the Borell--Brascamp--Lieb inequality. Nonetheless, we decided to focus on the log-concave case for a cleaner and more accessible exposition. Around the same time, Cordero-Erausquin, Fradelizi, and Maurey \cite{Cordero-BGaussian} used \eqref{eq:Brascamp_Lieb} to prove the (B)-conjecture for the Gaussian measure, pioneering the so-called $L_2$ method in the context of geometric and analytic inequalities. Almost a decade later, Kolesnikov and Milman \cite{Kol_Mil_BL, Kol_Mil_P} extended this approach to the setting of weighted Riemannian manifolds under suitable curvature assumptions, reigniting interest in the topic. With similar techniques, Nguyen \cite{Nguyen_BL} extended the results of \cite{Bobkvo_Ledoux}. Afterwards, the application of the $L_2$ method has found several applications in the extension of the (B)-conjecture to even log-concave probability measures \cite{cordero2025concavity, B-conjecture, rot_B} (which we will not treat in this work) and to the dimensional Brunn--Minkowski inequality \cite{cordero2025concavity, rot_B, dim_gauss, ChadGalynaL_p,Kol_Liv_Gardner-Zvavitch, GalynaDimensional_BM} (which we present later in \eqref{eq:dim_BM}). Both conjectures, already interesting on their own, have acquired further relevance in relation to the logarithmic Brunn--Minkowski conjecture, stated in \cite{log_in} by B\"or\"oczky, Lutwak, Yang, and Zhang, and are intimately connected to the logarithmic Minkowski problem, introduced by the same authors in \cite{log_Mink}. Thanks to the works of Saroglou \cite{Sar} and Livshyts, Marsiglietti, Nayar, and Zvavitch \cite{misc}, it is known that the logarithmic Brunn--Minkowski conjecture implies the dimensional Brunn--Minkowski conjecture and the (B)-conjecture. Further details on the development of these open problems can be found in \cite{cordero2025concavity}.

\medskip

Given a compact convex set $K\subset\R^n$ with non-empty interior and a measure $\mu$ with sufficiently regular density with respect to the Lebesgue measure, we denote by $H^1(\partial K,\mu)$ the weighted Sobolev space of real-valued functions on $\partial K$ with respect to $\mu$. The associated Sobolev norm is denoted by $\|\cdot\|_{H^1(\partial K,\mu)}$. Similarly, one defines $H^1(K,\mu)$ and the corresponding norm; see Section~3 for precise definitions. These spaces provide a natural extension for the choice of $\rho$ and $\varphi$ in Theorem \ref{thm:multiplicative_form}, where the two functions appear in a completely decoupled manner. Extending the validity of \eqref{eq:multiplicative_form} to the Sobolev spaces above justifies treating $\varphi$ as a parameter, allowing one to derive sharper inequalities for a fixed function $\rho\in H^1(\partial K,\mu)$. By exploiting a suitable Sobolev extension of $\rho$ to the whole set $K$, we obtain an interpolation-type inequality (see Proposition~\ref{prop:Boundary Sobolev form}) which, in turn, yields the following stability result for the weighted Poincar\'e inequality. 
\begin{theorem}\label{thm:stability_intro}
    Consider a non-empty compact convex set $K \subset \R^n$ such that $\partial K$ is a strictly convex manifold of class $C^2$. Let $u \in C^2(\R^n)$ be strictly convex and consider the measure $\mu$ with density $\di \mu(x) = e^{-u(x)}\di x$. If $\rho \in H^1(\partial K,\mu)$ is such that 
\begin{equation}\label{eq:stability_intro}
\int_{\partial K} \left\langle {\rm{II}}^{-1}\nabla_{\partial K}\rho, \nabla_{\partial K} \rho \right\rangle \di \mu \leq \int_{\partial K}\mathrm{H}_{\mu}\rho^2\di \mu  -\frac{1}{\mu(K)}\left( \int_{\partial K}\rho \di \mu  \right)^2 + \varepsilon
\end{equation}
for some $\varepsilon \in (0,1)$, then
\[\|\rho\|_{H^{1}(\partial K,\mu)} \leq C\varepsilon^{1/2}\]
for a constant $C$ depending only on $K$ and $\mu$.\\
In particular, equality is attained in \eqref{eq:Poincare} if and only if $\rho \equiv 0$.
\end{theorem}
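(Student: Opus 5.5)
Hypothesis \eqref{eq:stability_intro} says exactly that $\langle\rho,\rho\rangle_{\rm P}\le\varepsilon$. The plan is to use Theorem \ref{thm:multiplicative_form} (extended by density to $\rho\in H^1(\partial K,\mu)$ and $\varphi\in H^1(K,\mu)$) with a well-chosen test function $\varphi$, which turns this smallness into control of $\rho$ itself. Concretely, \eqref{eq:multiplicative_form} gives
\[
\langle\rho,\varphi\rangle_{\rm I}^2\le \varepsilon\,\langle\varphi,\varphi\rangle_{\rm BL}
\]
for every admissible $\varphi$. We therefore want a $\varphi$ for which $\langle\rho,\varphi\rangle_{\rm I}$ is comparable to $\|\rho\|^2$, while $\langle\varphi,\varphi\rangle_{\rm BL}$ is at most of order $\|\rho\|^2$.

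\textbf{Step 1 (density).} Both sides of \eqref{eq:multiplicative_form} are continuous in the respective Sobolev norms. This holds because $K$ is compact, $e^{-u}$ is bounded above and below on $K$, ${\rm II}^{-1}$ and ${\rm H}_\mu$ are bounded on $\partial K$, $(\nabla^2u)^{-1}$ is bounded on $K$, and the trace $H^1(K)\to L^2(\partial K)$ is continuous. Approximating by Lipschitz functions then extends the inequality to the Sobolev setting.

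\textbf{Step 2 (choice of $\varphi$).} Take $\varphi=E\rho$, a bounded Sobolev extension of $\rho$ into $K$. For instance, let $\varphi$ be the harmonic (or weighted-harmonic) extension, or $\rho(\pi(x))\chi(\mathrm{dist}(x,\partial K))$ in a collar, with $\chi$ a cutoff. This gives $\|\varphi\|_{H^1(K,\mu)}\le C\|\rho\|_{H^{1/2}(\partial K)}\le C\|\rho\|_{H^1(\partial K,\mu)}$. Then
\[
\langle\rho,\varphi\rangle_{\rm I}=\int_{\partial K}\rho^2\di\mu-\frac{1}{\mu(K)}\int_{\partial K}\rho\di\mu\int_K\varphi\di\mu ,
\]
and $\langle\varphi,\varphi\rangle_{\rm BL}\le C\|\rho\|^2_{H^1}$. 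The resulting bound is an interpolation-type inequality of the form $\big(\|\rho\|^2_{L^2}-\text{mean term}\big)^2\le C\varepsilon\|\rho\|_{H^1}^2$, presumably Proposition~\ref{prop:Boundary Sobolev form}.

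The mean term must be removed. Since $\langle\cdot,\cdot\rangle_{\rm I}$ vanishes when $\rho$ is constant, one approach is to split $\rho=\bar\rho+\rho_0$ with $\int_{\partial K}\rho_0\di\mu=0$. For $\rho_0$ the mean term disappears and we get $\|\rho_0\|^2_{L^2}\le C\varepsilon^{1/2}\|\rho\|_{H^1}$. The constant part is handled by testing $\langle\cdot,\cdot\rangle_{\rm P}$ on constants. We compute $\langle 1,1\rangle_{\rm P}=\mu(\partial K)^2/\mu(K)-\int_{\partial K}{\rm H}_\mu\di\mu$. Using $\int_{\partial K}{\rm H}_\mu\di\mu=\int_K(\Delta u-|\nabla u|^2)\ldots$, or more directly using strict positivity together with the Brascamp--Lieb gap of Livshyts through \eqref{eq:multiplicative_form} with a non-constant $\varphi$, this quantity is strictly positive.

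\textbf{Step 3 (closing).} Combine three ingredients: (a) $\langle\rho,\rho\rangle_{\rm P}\le\varepsilon$; (b) the $L^2$ control from Step 2; and (c) the identity
\[
\int\langle{\rm II}^{-1}\nabla\rho,\nabla\rho\rangle\di\mu\le \int{\rm H}_\mu\rho^2\di\mu+\varepsilon\le C\|\rho\|^2_{L^2}+\varepsilon ,
\]
which bounds the gradient part of the $H^1$ norm by the $L^2$ norm, since ${\rm II}^{-1}\ge c\,\Id$. Writing $X=\|\rho\|_{H^1}$, these give $X^2\le C\|\rho\|_{L^2}^2+\varepsilon$ and $\|\rho\|^2_{L^2}\le C\varepsilon^{1/2}X+(\text{constant-part terms})$. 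Hence $X^2\le C\varepsilon^{1/2}X+C\varepsilon$, so $X\le C\varepsilon^{1/2}$. Because $\varepsilon$ is arbitrary, equality in \eqref{eq:Poincare} corresponds to $\varepsilon\to0$ and forces $\rho\equiv0$.

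\textbf{Main obstacle.} The hardest step is handling the constant/mean component, i.e.\ showing that $\langle\cdot,\cdot\rangle_{\rm P}$ is coercive in the direction of constants. Equivalently, the choice of $\varphi$ must make $\langle\rho,\varphi\rangle_{\rm I}$ control all of $\|\rho\|_{L^2}^2$, including $\bar\rho$. My first attempt would be to use $\varphi=E\rho+\lambda\psi$, with $\psi$ a fixed non-constant function such as a linear function, and tune $\lambda$ so that the mean term cancels. Here I would exploit Livshyts' result that $\langle\psi,\psi\rangle_{\rm BL}>0$ only for non-constant $\psi$, together with $\int_{\partial K}\psi\di\mu-\frac{\mu(\partial K)}{\mu(K)}\int_K\psi\di\mu\ne0$ for a suitable $\psi$.
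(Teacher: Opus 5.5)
\textbf{Where the proof breaks.} Your overall architecture is the paper's. You test \eqref{eq:multiplicative_form} against an extension of $\rho$, obtain $\|\rho\|_{L^2(\partial K,\mu)}^2\le C\langle\rho,\rho\rangle_{\rm P}^{1/2}\|\rho\|_{H^1(\partial K,\mu)}$, bound the gradient via $\|\rho\|_{H^1}^2\le C\|\rho\|_{L^2}^2+\varepsilon$, and close a quadratic inequality. However, the step that disposes of the mean term in $\langle\rho,\varphi\rangle_{\rm I}$ has a genuine gap.

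Splitting $\rho=\bar\rho+\rho_0$ in the first slot does not work. To get $\|\rho_0\|_{L^2}^2\le C\varepsilon^{1/2}\|\rho\|_{H^1}$ from \eqref{eq:multiplicative_form} you would need $\langle\rho_0,\rho_0\rangle_{\rm P}\lesssim\varepsilon$. Only $\langle\rho,\rho\rangle_{\rm P}\le\varepsilon$ is given, and $\langle\rho,\rho\rangle_{\rm P}=\langle\rho_0,\rho_0\rangle_{\rm P}+2\bar\rho\langle 1,\rho_0\rangle_{\rm P}+\bar\rho^2\langle 1,1\rangle_{\rm P}$. The cross term $\langle 1,\rho_0\rangle_{\rm P}=-\int_{\partial K}{\rm H}_\mu\rho_0\di\mu$ is nonzero in general. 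Keeping $\rho$ in the first slot and taking $\varphi=E\rho_0$ does not help either, since the term $\frac{1}{\mu(K)}\int_{\partial K}\rho\di\mu\int_K E\rho_0\di\mu$ survives. For the same reason, knowing $\langle 1,1\rangle_{\rm P}>0$ gives no quantitative control of $\bar\rho$. So the ``constant-part terms'' in Step 3 are never bounded.

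Your fallback $\varphi=E\rho+\lambda\psi$ with $\psi$ linear also fails, because $\psi$ has nonzero trace. If you choose $\lambda$ so that $\int_K\varphi\di\mu=0$, you pick up the extra boundary term $\lambda\int_{\partial K}\rho\psi\di\mu$. This term is not small compared with $\|\rho\|_{L^2}^2$ and can cancel it. If instead you balance $\lambda$ against $\langle\rho,\psi\rangle_{\rm I}$, you divide by a $\rho$-dependent quantity that may vanish.

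\textbf{The missing idea.} Remove the mean term on the $\varphi$ side without changing the trace. Fix once and for all $g\in C^1(K)$ with $g|_{\partial K}=0$ and $\int_K g\di\mu\neq0$, and set $\tilde\varphi=E\rho-\big(\int_K E\rho\di\mu/\int_K g\di\mu\big)g$. Then:
\begin{itemize}
\item $\tilde\varphi=\rho$ on $\partial K$ and $\int_K\tilde\varphi\di\mu=0$;
\item hence $\langle\rho,\tilde\varphi\rangle_{\rm I}=\|\rho\|^2_{L^2(\partial K,\mu)}$ exactly, constants included;
\item and $\|\tilde\varphi\|_{H^1(K,\mu)}\le C\|E\rho\|_{H^1(K,\mu)}\le C\|\rho\|_{H^1(\partial K,\mu)}$.
\end{itemize}
Combined with \eqref{eq:continuity_BL}, this is Proposition~\ref{prop:Boundary Sobolev form} for every $\rho$, and your Step 3 then closes as written.

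Equivalently, keep your Step 2 with $\varphi=E\rho$ and also test \eqref{eq:multiplicative_form} with $\varphi=g$. Since $\langle\rho,g\rangle_{\rm I}=-\frac{1}{\mu(K)}\int_{\partial K}\rho\di\mu\int_K g\di\mu$, this gives $|\int_{\partial K}\rho\di\mu|\le C\varepsilon^{1/2}$. That bounds your leftover mean term and is exactly the control ``in the direction of constants'' you were looking for. Your intuition to use a second test function was right, but it must have zero trace rather than be linear.

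Neither coercivity of $\langle\cdot,\cdot\rangle_{\rm P}$ nor Livshyts' equality result is needed. Only the upper bound $\langle\varphi,\varphi\rangle_{\rm BL}\le C\|\varphi\|^2_{H^1(K,\mu)}$ enters.
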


\medskip

Let us now shortly present the dimensional Brunn--Minkowski conjecture. We say that $A \subset \R^n$ is origin-symmetric if $A=-A=\{-x \colon x \in A \}$, and that a measure $\mu$ is even if $\mu(B)=\mu(-B)$ for every $\mu$-measurable set $B \subset \R^n$. The question is the following: Given an even log-concave measure $\mu$ on $\R^n$, is it true that, for every pair of origin-symmetric non-empty compact convex sets $K,L \subset \R^n$, the inequality 
\begin{equation}\label{eq:dim_BM}
    \mu((1-t)K+tL)^{\frac{1}{n}}\geq (1-t)\mu(K)^{\frac{1}{n}}+t\mu(L)^{\frac{1}{n}}
\end{equation}
holds for every $t \in [0,1]$? In other words, does every even log-concave measure enjoy the same dimensional concavity as the Lebesgue measure when restricted to the class of origin-symmetric convex sets?

This conjecture was put forth by Gardner and Zvavitch for the Gaussian measure
\cite{gardner2010gaussian} and was resolved by Eskenazis and Moschidis \cite{dim_gauss}.
Cordero-Erausquin and Rotem \cite{rot_B} gave an affirmative answer for sufficiently
regular and rotationally invariant log-concave measures, while Kolesnikov and Livshyts \cite{Kol_Liv_Gardner-Zvavitch} established the conjecture for hereditarily
convex log-concave measures. For general even log-concave measures, Livshyts
\cite{GalynaDimensional_BM} proved \eqref{eq:dim_BM} with exponent $n^{-(4+o(1))}$,
which currently constitutes the best known universal bound. 

To study the dimensional Brunn--Minkowski conjecture, Livshyts \cite{GalynaStability} proposed the following local approach. Given a log-concave measure $\mu$ on $\mathbb{R}^n$ and a compact convex set $K \subset \mathbb{R}^n$ with non-empty interior, one seeks the maximal value $p(\mu,K)$ such that
\[\left. \frac{\di^2}{\di t^2} \mu\bigl(K(\rho,t)\bigr)^{p(\mu,K)} \right|_{t=0}\leq 0\quad \text{for every } \rho \in \mathcal{A},\]
where $K(\rho,t)$ denotes a perturbation of $K$ generated by a suitable function $\rho : \partial K \to \mathbb{R}$, and $\mathcal{A}$ is the class of admissible perturbations (see Section~4 for precise definitions). Building upon this idea, we shall reformulate the dimensional Brunn--Minkowski conjecture in a variational fashion, where the energy functional to be minimized turns out to depend crucially on the bilinear form $\langle \cdot, \cdot \rangle_{\rm P}$. With the main theorems we mentioned so far, as well as some preliminary estimates, we prove that not only is \eqref{eq:Poincare} saturated by trivial functions, but that $\langle \cdot, \cdot \rangle_{\rm P}$ is in addition coercive, therefore allowing the application of the Lax--Milgram theorem to $\langle \cdot, \cdot \rangle_{\rm P}$. From this procedure, we deduce the following result, which we formulate here as a statement on the existence and uniqueness of a weak solution to the corresponding Euler--Lagrange equation. Given a vector field
$X \in C^{1}(\partial K,\mathbb{R}^n)$, we denote by
$\nabla_{\partial K} \cdot X$ the tangential divergence of $X$ on $\partial K$.
\begin{theorem}\label{thm:minimal_power}
    Let $K$ and $\mu$ satisfy the assumptions of Theorem~\ref{thm:multiplicative_form}. Then there exists a unique weak solution $\overline{\rho} \in H^{1}(\partial K,\mu)$ to the equation
\begin{equation}\label{eq:PDE_divergence_form_intro}
-\nabla_{\partial K} \cdot ({\rm II}^{-1}\nabla_{\partial K}\rho)+\langle \nabla_{\partial K}u,{\rm II}^{-1}\nabla_{\partial K}{\rho} \rangle -{\rm H}_\mu\,\rho+\frac{1}{\mu(K)}\int_{\partial K}\rho\,\di\mu=1\qquad\text{on }\partial K,
\end{equation}
and it satisfies 
\begin{equation}\label{eq:new_concavity expression_intro}
p(\mu,K) = \frac{\mu(K)}{\int_{\partial K} \overline{\rho}\,\di\mu}.
\end{equation}
\end{theorem}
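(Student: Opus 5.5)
The plan is to recast the equation as a variational problem for the form $\langle\cdot,\cdot\rangle_{\rm P}$ and then apply Lax--Milgram. Integrating by parts on $\partial K$ against the weight $e^{-u}$ gives, for smooth $\psi$,
\[
\int_{\partial K}\Big(-\nabla_{\partial K}\cdot({\rm II}^{-1}\nabla_{\partial K}\rho)+\langle \nabla_{\partial K}u,{\rm II}^{-1}\nabla_{\partial K}\rho\rangle\Big)\psi \di\mu=\int_{\partial K}\langle {\rm II}^{-1}\nabla_{\partial K}\rho,\nabla_{\partial K}\psi\rangle\di\mu .
\]
No boundary terms appear, since $\partial K$ is closed. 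The weak formulation of \eqref{eq:PDE_divergence_form_intro} is therefore
\[
\langle \rho,\psi\rangle_{\rm P}=\int_{\partial K}\psi\di\mu \quad\text{for every }\psi\in H^1(\partial K,\mu).
\]
The right-hand side is a bounded linear functional on $H^1(\partial K,\mu)$, because $\mu$ has finite mass on the compact set $\partial K$.

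\textbf{Step 1: continuity of the form.} Since $\partial K$ is $C^2$, strictly convex and compact, ${\rm II}^{-1}$ is bounded, and ${\rm H}_\mu$ is bounded. Hence $\langle\cdot,\cdot\rangle_{\rm P}$ is continuous on $H^1(\partial K,\mu)$.

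\textbf{Step 2: coercivity, the main obstacle.} We need $\langle\rho,\rho\rangle_{\rm P}\ge c\|\rho\|^2_{H^1(\partial K,\mu)}$. Theorem \ref{thm:stability_intro}, which comes from the multiplicative inequality \eqref{eq:multiplicative_form} via Proposition \ref{prop:Boundary Sobolev form}, gives exactly this. Suppose $\langle\rho,\rho\rangle_{\rm P}=\varepsilon$ with $\|\rho\|_{H^1}=1$. Then \eqref{eq:stability_intro} holds with this $\varepsilon$. Since both sides of the desired inequality are $2$-homogeneous in $\rho$, normalising and applying the theorem for small $\varepsilon$ yields $1\le C\varepsilon^{1/2}$. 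So $\varepsilon\ge C^{-2}$ whenever $\varepsilon<1$, and coercivity follows with $c=\min\{1,C^{-2}\}$. Lax--Milgram then produces a unique weak solution $\overline\rho$.

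\textbf{Step 3: identifying $p(\mu,K)$.} We use the second-variation formula for $K(\rho,t)$ from Section~4, Livshyts' framework. One has $\frac{d}{dt}\mu(K(\rho,t))|_{0}=\int_{\partial K}\rho\di\mu$. For admissible perturbations (linear in support functions), $\frac{d^2}{dt^2}\mu|_0=\int{\rm H}_\mu\rho^2\di\mu-\int\langle{\rm II}^{-1}\nabla\rho,\nabla\rho\rangle\di\mu$. Expanding the second derivative of $\mu^p$, the condition $\frac{d^2}{dt^2}\mu(K(\rho,t))^p\le0$ becomes
\[
\langle\rho,\rho\rangle_{\rm P}\ \ge\ \frac{p}{\mu(K)}\Big(\int_{\partial K}\rho\di\mu\Big)^2 ,
\]
after using $(p-1)+1=p$ to absorb the term $\frac1{\mu(K)}(\int\rho)^2$. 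Hence
\[
p(\mu,K)=\inf_\rho \frac{\mu(K)\langle\rho,\rho\rangle_{\rm P}}{\big(\int\rho\di\mu\big)^2}.
\]
By density of admissible $\rho$ in $H^1$, the infimum may be taken over $H^1$.

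\textbf{Step 4: computing the infimum.} The infimum of $\langle\rho,\rho\rangle_{\rm P}$ subject to $\ell(\rho)=\int\rho\di\mu=1$ is $1/\|\ell\|_*^2$, where $\|\ell\|_*$ is the norm of $\ell$ dual to the inner product ${\rm P}$. The Riesz representative of $\ell$ is $\overline\rho$, so $\|\ell\|_*^2=\langle\overline\rho,\overline\rho\rangle_{\rm P}=\int\overline\rho\di\mu$. Concretely, Cauchy--Schwarz gives
\[
\Big(\int\rho\di\mu\Big)^2=\langle\overline\rho,\rho\rangle_{\rm P}^2\le\langle\overline\rho,\overline\rho\rangle_{\rm P}\langle\rho,\rho\rangle_{\rm P},
\]
with equality at $\rho=\overline\rho$. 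It follows that $p(\mu,K)=\mu(K)/\int\overline\rho\di\mu$. Note $\int\overline\rho\di\mu=\langle\overline\rho,\overline\rho\rangle_{\rm P}>0$, since $\overline\rho\neq0$ by positivity of ${\rm P}$.

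The delicate points are Step 2 and, in Step 3, the justification that the admissible class is dense and that its second variation is given by the stated formula.
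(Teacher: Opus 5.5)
Your proposal is correct. Its skeleton is exactly the paper's: the weak formulation $\langle\rho,\psi\rangle_{\rm P}=\int_{\partial K}\psi\,\di\mu$, Lax--Milgram, and then Cauchy--Schwarz for $\langle\cdot,\cdot\rangle_{\rm P}$ with equality at $\overline{\rho}$.

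The genuine difference is in the coercivity step. The paper first proves the improved Poincar\'e inequality of Proposition~\ref{Prop:Bilinear_vs_gradient norm}, a constant $\lambda_1>1$ obtained by contradiction from Theorem~\ref{thm:Colesanti_Stability}. It then proves Lemma~\ref{lem:poincare_coercivity} by a case split. When $\|\rho\|_{L^2}$ dominates $\|\nabla_{\partial K}\rho\|_{L^2}$ it uses the interpolation inequality of Proposition~\ref{prop:Boundary Sobolev form}; otherwise it uses $\lambda_1>1$.

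You observe instead that the stability theorem is, after rescaling, equivalent to coercivity. This works because its constant is independent of $\rho$, and both $\langle\rho,\rho\rangle_{\rm P}$ and $\|\rho\|^2_{H^1(\partial K,\mu)}$ are $2$-homogeneous. Your route is shorter and bypasses both $\lambda_1$ and the case distinction. The only care needed is the case $\langle\rho,\rho\rangle_{\rm P}=0$: there you must apply the theorem with every $\varepsilon'\in(0,1)$, since its hypothesis requires $\varepsilon>0$. What the paper's route buys is the spectral-gap statement $\lambda_1>1$ as a by-product. That is of independent interest but not needed for Theorem~\ref{thm:minimal_power}.

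In Step 3 you derive $p(\mu,K)=\inf J$ from the second variation; your formula is the $\psi=0$ case of the computation in the proof of Proposition~\ref{prop:explicit_derivatives}. The paper instead quotes this characterization from Livshyts, and both versions rest on the same density argument. Your remark that $\int_{\partial K}\overline{\rho}\,\di\mu=\langle\overline{\rho},\overline{\rho}\rangle_{\rm P}>0$ also makes explicit a point the paper leaves implicit.
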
\noindent
It is worth mentioning that the operator $-\nabla_{\partial K} \cdot ({\rm II}^{-1}\nabla_{\partial K} (\cdot) )+\langle \nabla_{\partial K}u,{\rm II}^{-1}\nabla_{\partial K}(\cdot) \rangle$ is nothing but a weighted Laplacian on $\partial K$ with an appropriate choice of metric and measure. Compare, e.g., \cite[Section 6.5]{Kol_Mil_P}. 

This equation paves the way to several reformulations of the conjecture~\eqref{eq:dim_BM}. For the moment, we highlight the following consequence (see Theorem \ref{thm:reformulation} later), which relies on the bilinear form $\langle \cdot, \cdot \rangle_{\rm I}$ introduced earlier. In particular, the dimensional Brunn--Minkowski conjecture is equivalent to the following local statement: For every origin-symmetric $K$ and even $u$ satisfying the assumptions of Theorem~\ref{thm:minimal_power}, the
inequality
\begin{equation}\label{eq:equivalent_form_intro}
\langle \overline{\rho}, \langle \nabla u, x \rangle \rangle_{\rm I}+\int_K \langle \nabla u, x \rangle\, \di \mu \geq 0
\end{equation}
holds, where $\overline{\rho}$ is the function provided by Theorem~\ref{thm:minimal_power}.  Indeed, $\overline{\rho}$ may be interpreted as an extremal perturbation of $K$, encoding the concavity properties of $K$ while simultaneously incorporating the geometry induced by the measure $\mu$. It is interesting to note that in the presence of symmetry, the minimizer is even. Moreover, we note that estimates similar to \eqref{eq:equivalent_form_intro} were already required in \cite{cordero2025concavity} (see Lemma~11 therein) to establish both the (B)-conjecture and the dimensional Brunn--Minkowski conjecture, together with certain spectral inequalities. By contrast, the Euler--Lagrange approach adopted here appears to bypass the need for such spectral inequalities, at the cost of a more delicate analysis of the PDE \eqref{eq:PDE_divergence_form_intro}.

\medskip

We conclude this manuscript with a result in the same spirit as \cite[Theorem~1.1]{Kol_Liv_Gardner-Zvavitch}, due to Kolesnikov and Livshyts, obtained under the same pinching conditions on the measure $\mu$. We denote the  identity matrix by $\Id$.
\begin{theorem}\label{thm:SG_intro}
Consider an origin-symmetric compact convex set $K \subset \R^n$ such that $\partial K$ is a strictly convex manifold of class $C^2$, and an even strictly convex function $u \in C^2(\R^n)$. Consider the measure $\mu$ with density $\di \mu(x) = e^{-u(x)}\di x$ and suppose, moreover, that $k_{1}\,\mathrm{Id} \leq \nabla^{2}u$ and $ \Delta u \leq k_{2}\,n$ for some $k_2\geq k_1 >0$.  Then
\[  p(\mu,K) \geq \frac{c}{n},\]
where $c=\frac{1}{r+1}$, with $r=\frac{k_{2}}{k_{1}}\geq 1$. \\
In particular, 
\[ \mu((1-t)K+tL)^{\frac{c}{n}}\geq (1-t)\mu(K)^{\frac{c}{n}}+t\mu(L)^{\frac{c}{n}}\] for every origin-symmetric non-empty compact convex sets $K, L \subset \R^n$ and $t \in [0,1]$.
\end{theorem}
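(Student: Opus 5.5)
The plan is to reduce the estimate to a lower bound on the energy $\langle\cdot,\cdot\rangle_{\rm P}$ along even perturbations, and then to produce that lower bound from the Cauchy--Schwarz structure of Theorem \ref{thm:multiplicative_form} with a well-chosen interior test function $\varphi$.

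\textbf{Step 1: variational reformulation.} By Theorem \ref{thm:minimal_power}, $p(\mu,K)=\mu(K)/\int_{\partial K}\rhobar\di\mu$. The weak form of \eqref{eq:PDE_divergence_form_intro} reads $\langle \rhobar,\psi\rangle_{\rm P}=\int_{\partial K}\psi\di\mu$ for every $\psi\in H^1(\partial K,\mu)$. Since $\langle\cdot,\cdot\rangle_{\rm P}$ is positive definite, this gives
\[ \int_{\partial K}\rhobar\di\mu=\langle\rhobar,\rhobar\rangle_{\rm P}=\sup_{\rho\neq 0}\frac{\left(\int_{\partial K}\rho\di\mu\right)^2}{\langle\rho,\rho\rangle_{\rm P}}. \]
Because $K$ is origin-symmetric and $u$ is even, uniqueness in Theorem \ref{thm:minimal_power} forces $\rhobar$ to be even. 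Hence the supremum may be taken over even $\rho$ only. The claim $p(\mu,K)\ge c/n$ is therefore equivalent to
\[ \left(\int_{\partial K}\rho\di\mu\right)^2\le (r+1)\,n\,\mu(K)\,\langle\rho,\rho\rangle_{\rm P}\qquad\text{for every even }\rho. \]
The "in particular" part then follows by the standard local-to-global argument of Livshyts: the bound holds for every admissible $K$, it is applied along the Minkowski segment $(1-t)K+tL$, and the regularity assumptions are removed by approximation.

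\textbf{Step 2: bounding $\langle\rho,\rho\rangle_{\rm P}$ from below.} I would use Theorem \ref{thm:multiplicative_form}, which gives $\langle\rho,\rho\rangle_{\rm P}\ge \langle\rho,\varphi\rangle_{\rm I}^2/\langle\varphi,\varphi\rangle_{\rm BL}$, extended to Sobolev $\varphi$. For a given even $\rho$, I take $\varphi$ to be the (even) solution of the weighted Neumann problem
\[ L\varphi:=\Delta\varphi-\langle\nabla u,\nabla\varphi\rangle=\frac{1}{\mu(K)}\int_{\partial K}\rho\di\mu\ \text{ in }K,\qquad \partial_\nu\varphi=\rho\ \text{ on }\partial K. \]
The compatibility condition holds by the divergence theorem. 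Integrating by parts then yields
\[ \langle\rho,\varphi\rangle_{\rm I}=\int_K|\nabla\varphi|^2\di\mu . \]
For the denominator, $\nabla^2u\ge k_1\Id$ gives $\langle\varphi,\varphi\rangle_{\rm BL}\le k_1^{-1}\int_K|\nabla\varphi|^2\di\mu-\mathrm{Var}_\mu(\varphi)$. Combining the two, it remains to prove
\[ \left(\int_{\partial K}\rho\di\mu\right)^2\left(\frac{1}{k_1}\int_K|\nabla\varphi|^2\di\mu-\mathrm{Var}_\mu(\varphi)\right)\le (r+1)\,n\,\mu(K)\left(\int_K|\nabla\varphi|^2\di\mu\right)^2 . \]

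\textbf{Step 3: closing the estimate (main obstacle).} The remaining inequality relates $\left(\int_{\partial K}\rho\di\mu\right)^2=\mu(K)^2(L\varphi)^2$ to $\int_K|\nabla\varphi|^2\di\mu$, and this is where the upper bound $\Delta u\le k_2 n$ and the symmetry must enter. My first attempt is to test against $\langle\nabla u,x\rangle$ and $|x|^2/2$, using the identity $L(|x|^2/2)=n-\langle\nabla u,x\rangle$. Integrating $\langle\nabla\varphi,x\rangle$ by parts against $L\varphi$ constant should then express $\int_{\partial K}\rho\di\mu$ through $\int_K\langle\nabla\varphi,x\rangle\di\mu$ plus boundary terms that involve $\langle x,\nu_K\rangle$. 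Next, I would apply the Reilly/Bochner formula for $L$ on the convex set $K$, which has a nonnegative boundary term by convexity and a curvature term controlled by $\nabla^2u\ge k_1\Id$. Evenness of $\varphi$ makes $\nabla\varphi$ odd, so $\nabla\varphi$ has zero $\mu$-mean; this is what makes the Brascamp--Lieb bound on $\nabla\varphi$ effective, in the spirit of Kolesnikov--Livshyts. The condition $\Delta u\le k_2n$, combined with $\int_K\Delta u\di\mu$-type integrations by parts, should control $\int_K\langle\nabla u,x\rangle\di\mu$ and $\int_K|x|^2\di\mu$ relative to $n\mu(K)/k_1$. This is the expected source of the constant $r+1=(k_2+k_1)/k_1$: a term $k_2/k_1$ from the pinching and a term $1$ from the $-\mathrm{Var}_\mu(\varphi)$ part.

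I expect this last bookkeeping to be the hard step: matching the Reilly boundary terms with the interaction term so that the constants come out exactly as $r+1$. If the Neumann choice of $\varphi$ does not close, I would fall back on the reformulation \eqref{eq:equivalent_form_intro} with the loss allowed by $c$. That is, I would show $\langle\rhobar,\langle\nabla u,x\rangle\rangle_{\rm I}\ge -C\int_K\langle\nabla u,x\rangle\di\mu$, bounding the interaction by Theorem \ref{thm:multiplicative_form} with $\varphi=\langle\nabla u,x\rangle$ and estimating $\langle\varphi,\varphi\rangle_{\rm BL}$ using the two-sided Hessian bounds.
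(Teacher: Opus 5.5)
Your Step 1 and the local-to-global remark are fine. The argument, however, stops exactly where the content lies: Step 3 is never carried out. It also cannot be, because the reduction in Step 2 is already too lossy.

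Test it on $u=\frac{k}{2}|x|^2$ (so $r=1$), $K=B_R$ and $\rho\equiv 1$. As $R\to 0$, your Neumann solution is $\varphi=|x|^2/(2R)$ up to relative errors $O(R^2)$. Hence $\int_{\partial K}\rho\di\mu\approx n\mu(K)/R$, $\int_K|\nabla\varphi|^2\di\mu\approx\frac{n}{n+2}\mu(K)$ and $\mathrm{Var}_\mu(\varphi)=O(R^2)\mu(K)$. The left-hand side of your target inequality divided by the right-hand side is then $\frac{n+2}{2kR^2}(1+O(R^2))$, which exceeds $1$ for small $R$.

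This is not an artefact of your choice of $\varphi$. For every $\varphi$ one has $\langle 1,\varphi\rangle_{\rm I}=\int_K\langle\nabla\varphi,\nabla w\rangle\di\mu$, where $w$ is the Neumann solution above, so $|\nabla w|\lesssim 1$. On small balls one also has $\langle\varphi,\varphi\rangle_{\rm BL}\geq\frac{1}{2k}\int_K|\nabla\varphi|^2\di\mu$. Hence $\sup_\varphi\langle 1,\varphi\rangle_{\rm I}^2/\langle\varphi,\varphi\rangle_{\rm BL}\lesssim k\,\mu(K)$, whereas $\langle 1,1\rangle_{\rm P}\approx n\mu(K)/R^2$. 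So the lower bound $\langle\rho,\rho\rangle_{\rm P}\geq\langle\rho,\varphi\rangle_{\rm I}^2/\langle\varphi,\varphi\rangle_{\rm BL}$ is hopeless when $\rho$ carries the ``Lebesgue part'' of the problem: the scales ${\rm II}^{-1}=R\,\Id$ and $(\nabla^2u)^{-1}=k^{-1}\Id$ do not match. No Reilly-type bookkeeping will repair this.

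Your fallback uses the right test function $g=\langle\nabla u,x\rangle$, but Cauchy--Schwarz applied to $\rhobar$ itself fails for the same reason. On these small balls, $(\int_{\partial K}\rhobar\di\mu)^{1/2}\langle g,g\rangle_{\rm BL}^{1/2}$ is of order $R\,\mu(K)$, while $\int_K g\di\mu$ is of order $R^2\mu(K)$. So a bound of the form $\langle\rhobar,g\rangle_{\rm I}\geq -C\int_Kg\di\mu$ is out of reach that way.

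The missing idea is to subtract the Lebesgue extremal exactly before invoking the inequality. By \eqref{eq:integral_support} and \eqref{eq:support_calculation}, $\int_{\partial K}h_K(\nu_K)\di\mu=n\mu(K)-\int_Kg\di\mu$ and $\mathcal{L}(h_K(\nu_K))=1+g-\frac{1}{\mu(K)}\int_Kg\di\mu$. Therefore $f=\rhobar-h_K(\nu_K)$ satisfies $\langle f,f\rangle_{\rm P}=-\langle f,g\rangle_{\rm I}$ and $n\mu(K)-\int_{\partial K}\rhobar\di\mu=\int_Kg\di\mu+\langle f,g\rangle_{\rm I}+\langle h_K(\nu_K),g\rangle_{\rm I}$.

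The paper applies Theorem~\ref{thm:mean_form} only to the pair $(f,-g)$. This gives $\langle f,f\rangle_{\rm P}\leq\langle g,g\rangle_{\rm BL}$, i.e. $\langle f,g\rangle_{\rm I}\geq-\langle g,g\rangle_{\rm BL}$. An explicit integration by parts for $\langle h_K(\nu_K),g\rangle_{\rm I}$ then cancels every term of $\langle g,g\rangle_{\rm BL}$ except $\int_K\langle(\nabla^2u)^{-1}\nabla u,\nabla u\rangle\di\mu$. The result is $\int_{\partial K}\rhobar\di\mu\leq n\mu(K)+\int_K\langle(\nabla^2u)^{-1}\nabla u,\nabla u\rangle\di\mu$ for arbitrary $K$ and $u$ (Proposition~\ref{prop:concavity_general_bound}).

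Symmetry and $\Delta u\leq k_2n$ enter only afterwards, via Lemma~\ref{lem:radial_bound}, which bounds the last average by $nr$. This uses $(\nabla^2u)^{-1}\leq k_1^{-1}\Id$ together with $\int_K|\nabla u|^2\di\mu\leq\int_K\Delta u\di\mu$, valid for symmetric $K$ and even $u$. Thus the $1$ in $r+1$ comes from the Lebesgue term $n\mu(K)$, not from $\mathrm{Var}_\mu(\varphi)$, and the $r$ comes from the measure.
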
\noindent
Notice that the estimate obtained in \cite{Kol_Liv_Gardner-Zvavitch} is slightly sharper (even though morally equivalent up to an absolute constant). In the notation above, it corresponds to the constant $c=2/(\sqrt{r}+1)^2$. The bound derived here, however, is obtained through a substantially different procedure, building directly on the framework developed in the preceding sections. In particular, we believe this different method to be of independent interest. Finally, we note that, when $\mu$ is the Gaussian measure, Theorem~\ref{thm:SG_intro} implies the main result of \cite{Kol_Liv_Gardner-Zvavitch}; in particular, a close inspection of the proof implies that the symmetry assumption on the convex sets involved can be weakened to the assumption that the sets merely contain the origin.

\paragraph{Structure of the paper.} 

We start in Section 2 with an exposition of the perturbative methods necessary for our computations. In the same section, we prove Theorem \ref{thm:multiplicative_form}. In Section 3, we extend the bilinear forms $\langle\cdot,\cdot \rangle_{\rm P}$ and $\langle\cdot,\cdot \rangle_{\rm BL}$ to suitable Sobolev spaces, providing the machinery needed for Theorem \ref{thm:stability_intro}, and the proof of the theorem itself. We then start Section 4 proving that $\langle\cdot,\cdot \rangle_{\rm P}$ satisfies the assumptions of the Lax--Milgram theorem, which will imply Theorem \ref{thm:minimal_power}. In the same section, we discuss the corresponding Euler--Lagrange equation and some reformulations of the dimensional Brunn--Minkowski conjecture. The manuscript is concluded in Section 5 with the proof of Theorem \ref{thm:SG_intro}.

\section{A concavity principle along functional Wulff shapes}

For the convenience of the reader, we report first some preliminaries on convex sets and convex functions. For some complete introductions on these topics, the reader can consult the monographs by Schneider \cite{SchneiderConvexBodiesBrunn2013} and Rockafellar \cite{RockafellarConvex1997}. For $A, B \subset \R^n$, their Minkowski sum is the set \[ A+B=\{ x+y \in \R^n: x\in A, y \in B\}.\] We denote by $\K^n$ the family of compact convex subsets of $\R^n$ with non-empty interior, also known as convex bodies. This space is closed under Minkowski addition. If $K \in \K^n$, then it is uniquely determined by its support function $h_K(x)=\sup_{y \in K} \langle x, y \rangle, x \in \R^n$. By construction, support functions are positively $1$-homogeneous, and can thus be identified with their restrictions to the Euclidean unit sphere $\sph^{n-1}$ of $\R^n$. A convex body $K \in \K^n$ can be represented via its support function as \[ K=\{x \in \R^n : \langle x,\xi \rangle \leq h_K(\xi) \text{ for every }\xi \in \sph^{n-1} \}.\] This representation is connected with the so-called Wulff shape construction. Consider $f \in C(\sph^{n-1})$. Then, the corresponding Wulff shape is the set \[ [f]=\{x \in \R^n: \langle x , \xi \rangle \leq f(\xi) \text{ for every }\xi \in \sph^{n-1} \}.\] It is immediate to prove that the set thus obtained is always convex. Moreover, $[f] \in \K^n$ whenever $f>0$ up to the addition of a linear function.

Consider now the space of convex functions \[ \Conv(\R^n)=\{u: \R^n \to \R\cup \{\infty\}: u \text{ is convex, lower semi-continuous, and proper} \},\] where a function $u$ is proper if it is not identically equal to $\infty$. Equivalently, $\Conv(\R^n)$ is the space of functions $u: \R^n \to \R\cup\infty$ such that the corresponding epigraph $\epi(u)=\{(x,z) \in \R^n \times \R: u(x) \leq z \}$ is convex, closed, and non-empty.
On $\Conv(\R^n)$, there is an operation which is equivalent to the Minkowski addition. Given $u, v \in \Conv(\R^n)$, their infimal convolution is the function $u \square v$, which is characterized by the property $\epi(u \square v)=\epi(u)+\epi(v)$. Thus, $u \square v \in \Conv(\R^n)$. Alternatively, this operation can be explicitly written as \[ u \square v(x)= \inf_{y \in \R^n} \{ u(y)+v(x-y) \}.\] A further operation on the set of convex functions is the one of epi-multiplication. For $u \in \Conv(\R^n)$ and $t>0$, the function $t \sq u$ is described by the condition $\epi(t \sq u)=t\epi(u)$. Equivalently, \[t\sq u(x) = t u\left( \frac{x}{t} \right).\]
Observe that $\K^n$ can be identified inside $\Conv(\R^n)$ as the subset of indicator functions. That is, for $K \in \K^n$ we consider $I_K \in \Conv(\R^n)$ defined as 
\begin{align*}
     I_K(x)=\begin{cases}
        0  & \text{ if }x \in \K,\\
        \infty & \text{ otherwise.}
    \end{cases}
\end{align*}
Since $h_K \in \Conv(\R^n)$ for every $K \in \K^n$, support functions provide a further representation of convex bodies inside $\Conv(\R^n)$. We say that a function on $\sph^{n-1}$ is convex if its natural positive $1$-homogeneous extension is convex. Observe that if $f \in C(\sph^{n-1})$ is convex, then $f=h_{[f]}$.

We now introduce a fundamental tool for this section. Given a function $\psi: \R^n \to \R \cup \{\infty\}$, its Fenchel--Legendre transform is \[ \psi^*(x)=\sup_{y \in \R^n}\{ \langle x, y \rangle - \psi(y)\}.\] It is well known that if $u \in \Conv(\R^n)$ then $u^* \in \Conv(\R^n)$ and $(u^*)^*=u$. We will use several well-known properties of this transform. For more details, the reader can consult \cite[Section 26]{RockafellarConvex1997}. We recall, in particular, that for $f \in C(\sph^{n-1})$ one has the identity
\begin{equation}\label{eq:FL_Wulff}
    f^*=I_{[f]},
\end{equation}
where $f$ is identified with its $1$-homogeneous extension. As a special case, observe that $h_K^*=I_K$ for every $K \in \K^n$. Another useful property of the Fenchel--Legendre transform is the following: For $u,v \in \Conv(\R^n)$ and $t,s>0$,
\begin{equation}\label{eq:sums_FL}
    (tu^*+sv^*)^*=(t \sq u) \square (s \sq v).
\end{equation} 
In \cite{Ulivelli_Wulff}, the second-named author investigated the connections between Wulff shapes and the Fenchel--Legendre transform. In particular, the latter can be regarded as a functional version of the former. We will largely use this intuition in the remainder of this section.

\medskip

Theorem \ref{thm:multiplicative_form} will be obtained by differentiating the following concavity principle, which is a direct application of \cite[Theorem 6]{Prekopa}.
\begin{lemma}\label{lemma:concavity_principle_PL}
    Consider $u \in \Conv(\R^n)$ and $K \in \K^n$. Then, for every $\psi \in C(\R^n)$ and $f \in C(\sph^{n-1})$ the functional
    \begin{equation}\label{eq:conc_pr}
        \F(t)= \log \int_{[h_K + tf]} e^{-(u^*+t\psi)^*} \di x
    \end{equation}
    is concave in $t \in [0,1]$.
\end{lemma}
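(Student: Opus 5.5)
The plan is to write $\F(t)$ as the logarithm of the marginal of a jointly log-concave function and then apply Prékopa's theorem \cite[Theorem 6]{Prekopa}. Concretely, set
\[
\Phi(t,x) = (u^*+t\psi)^*(x) + I_{[h_K+tf]}(x), \qquad (t,x)\in[0,1]\times\R^n,
\]
so that $\F(t)=\log\int_{\R^n} e^{-\Phi(t,x)}\di x$. It suffices to show that $\Phi$ is jointly convex on $[0,1]\times\R^n$. Concavity of $\F$ then follows, since marginals of log-concave functions are log-concave (equivalently, from \eqref{eq:PL_1} applied to the sections $x\mapsto e^{-\Phi(t,x)}$). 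If the integral vanishes for some $t$, we interpret $\log 0=-\infty$; concavity in the extended sense still holds, because the set of $t$ where the integral is positive is an interval.

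\textbf{Joint convexity of the first term.} By definition of the Fenchel--Legendre transform,
\[
(u^*+t\psi)^*(x)=\sup_{y\in\R^n}\bigl\{\langle x,y\rangle - u^*(y) - t\psi(y)\bigr\}.
\]
For each fixed $y$, the expression inside the supremum is affine in the pair $(t,x)$ (values $-\infty$ are allowed when $u^*(y)=\infty$). A pointwise supremum of affine functions is jointly convex, and lower semicontinuous, as a function with values in $\R\cup\{\pm\infty\}$. No convexity of $\psi$ is needed; continuity of $\psi$ only ensures that everything is well defined.

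\textbf{Joint convexity of the second term.} By \eqref{eq:FL_Wulff}, $I_{[h_K+tf]}(x)=\sup_{\xi\in\sph^{n-1}}\sup_{s>0} s\bigl(\langle x,\xi\rangle-h_K(\xi)-tf(\xi)\bigr)$. Equivalently, the set
\[
\{(t,x): \langle x,\xi\rangle\le h_K(\xi)+tf(\xi)\ \text{for all }\xi\}
\]
is an intersection of half-spaces in $(t,x)$, hence convex and closed, and its indicator is jointly convex. The sum of the two jointly convex functions is jointly convex. The case $\Phi\equiv-\infty$ somewhere cannot occur when the integral is finite: if $(u^*+t\psi)^*$ takes the value $-\infty$, then $u^*+t\psi\equiv\infty$, which is impossible since $u^*$ is proper and $\psi$ is finite.

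\textbf{Main obstacle.} The main difficulty is technical rather than conceptual: checking that Prékopa's theorem applies with possibly infinite values and possibly non-integrable sections. I will first verify that $[h_K+tf]$ is compact; it is contained in a translate of a dilate of $K$ by a bounded amount, since $f$ is bounded. This makes each integral finite, because $e^{-\Phi}$ is bounded on a compact set: $(u^*+t\psi)^*$ is bounded below there, by taking any $y$ with $u^*(y)<\infty$. Measurability comes from lower semicontinuity. Prékopa's result then gives log-concavity of $t\mapsto\int e^{-\Phi(t,x)}\di x$, which is exactly the concavity of $\F$ on $[0,1]$.
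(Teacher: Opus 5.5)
Your proposal is correct and follows essentially the same route as the paper. Both write the integrand as $e^{-\Phi(t,x)}$ with $\Phi=(u^*+t\psi)^*+I_{[h_K+tf]}$, obtain joint convexity of each term as a supremum of functions affine in $(t,x)$ (the paper via $I_{[h_K+tf]}=(h_K+tf)^*$, you equivalently via an intersection of half-spaces), and conclude with Pr\'ekopa's theorem. Your treatment of the degenerate and integrability issues is actually more careful than the paper's brief ``rescaling, if necessary'' reduction; the precise containment you need for compactness is $[h_K+tf]\subseteq K+\|f\|_\infty B^n$, with $B^n$ the unit ball, rather than a translate of a dilate of $K$.
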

\begin{proof}
    Assume that $[h_K + tf]$ has non-empty interior and $(u^*+t\psi)^* \neq \infty$ on an open set for $t$ sufficiently small (rescaling, if necessary). Otherwise, $\F(t)=-\infty$ and the statement is trivial. 
    
    Observe that \[ \int_{[h_K + tf]} e^{-(u^*+t\psi)^*} \di x = \int_{\R^n} e^{-(u^*+t\psi)^* - I_{[h_K + tf]}} \di x. \] 
    As $(u^*+t\psi)^*(x)=\sup_{y \in \R^n} \{\langle x, y \rangle - u^*(y) - t \psi(y) \}$ and $\langle x, y \rangle - u^*(y) - t \psi(y)$ is convex in $(x,t)$ for every $y \in \R^n$, one readily checks that $(u^*+t\psi)^*(x)$ is jointly convex in $(x,t)$, as it is a supremum of convex functions. By \eqref{eq:FL_Wulff}, notice that $I_{[h_K+tf]}(x)=(h_K+tf)^*(x)$ and, analogously, this function is jointly convex in $(x,t)$. Therefore, $(u^*+t\psi)^*(x) + I_{[h_K + tf]}(x)$ is jointly convex and we can apply Pr\'ekopa's concavity principle \cite[Theorem 6]{Prekopa}, which entails our claim.
\end{proof}

As we will soon have to compute the first and second derivatives of \eqref{eq:conc_pr}, we need to assume further regularity on the sets and functions involved. We say that $K \in \K^n$ is a smooth convex body if $\partial K$ is a manifold of class $C^2$ and strictly convex in the sense described in the introduction, i.e.,
${\rm II} > 0$. In particular, $h_K$ is of class $C^2$ in $\R^n \setminus \{ o\}$ (compare \cite[Section 2.5]{SchneiderConvexBodiesBrunn2013}) and the main curvatures of $\partial K$ are strictly positive and bounded as functions on $\partial K$. Additionally, in this section, we will always tacitly assume that $o \in \mathrm{int}K$ (the interior of $K$), as the problems at hand will always be invariant under translation. We say that a function $u \in \Conv(\R^n)$ is smooth and strictly convex if $u \in C^2(\R^n)$ and $\nabla^2 u$ is positive definite. We now provide a statement concerning the computation of shape derivatives. In the context we are treating, similar facts have been proved in \cite{Kol_Mil_P} and \cite{cordero2025concavity}. For a function $\Phi_t$ differentiable in $t$ (and which might depend on other variables), we adopt the notation $\Phi'_t=\frac{\partial }{\partial t} \Phi_t$ for the partial derivative on the entry of $t$ only.
\begin{proposition}\label{prop:derivative}
    Consider a family of Lipschitz vector fields $X_t: \R^n \to \R^n, t \in (-\varepsilon,\varepsilon), \varepsilon>0$, such that $X_0=\Id$ and $X_t$ is differentiable and invertible for every $t \in (-\varepsilon,\varepsilon)$. Consider, moreover, a bounded set $A \subset \R^n$ with Lipschitz boundary, and define $A_t=X_t(A)$. If $F_t \in C^1(\R^n\times(-\varepsilon,\varepsilon))$. Then, for every $t \in (-\varepsilon,\varepsilon),$
        \begin{equation}\label{eq:first_derivative}
            \frac{\di}{\di t} \int_{A_t} F_t(x) \di x =\int_{A_t} F'_t (x) \di x + \int_{\partial A_t}F_t(x) \langle X'_t(X^{-1}_t(x)), \nu_{A_t}(x) \rangle \di \haus^{n-1}(x), 
        \end{equation}
        where $\nu_A$ is the outer unit normal on the boundary of $A$. In particular,
        \begin{equation}\label{eq:first_derivative_0}
            \left.  \frac{\di}{\di t} \int_{A_t} F_t(x) \di x \right|_{t=0} =\int_{A} F'_0 (x) \di x + \int_{\partial A}F_0(x) \langle X'_0(x), \nu_{A}(x) \rangle \di \haus^{n-1}(x).
        \end{equation}
\end{proposition}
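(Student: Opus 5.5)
The plan is the classical Reynolds transport argument: pull the moving domain back to the fixed set $A$ by the change of variables $x=X_t(y)$, differentiate under the integral sign, and then convert the resulting divergence term into a boundary flux with the divergence theorem. Concretely, I write
\[
\int_{A_t}F_t(x)\di x=\int_A F_t(X_t(y))\,J_t(y)\di y,\qquad J_t=\det DX_t .
\]
Since $X_t$ is Lipschitz and invertible with $X_0=\Id$, for $t$ small we have $J_t>0$, so the absolute value in the change of variables formula can be dropped. I would prove \eqref{eq:first_derivative} at $t=0$ first. The general case then follows by applying the $t=0$ statement to the shifted family $Y_s=X_{t+s}\circ X_t^{-1}$ with base set $A_t$. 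This family satisfies $Y_0=\Id$ and $Y_s'|_{s=0}=X'_t\circ X_t^{-1}$, which is exactly the velocity field appearing in \eqref{eq:first_derivative}. Moreover, $A_t$ is again bounded with Lipschitz boundary, being the bi-Lipschitz image of $A$.

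At $t=0$ I differentiate the integrand pointwise:
\[
\frac{\di}{\di t}\Big[F_t(X_t(y))J_t(y)\Big]_{t=0}=F_0'(y)+\langle\nabla F_0(y),X_0'(y)\rangle+F_0(y)\,\mathrm{div}\,X_0'(y).
\]
Here I use Jacobi's formula $\frac{\di}{\di t}\det DX_t|_{t=0}=\tra(DX_0')=\mathrm{div}\,X_0'$, which relies on $DX_0=\Id$. The last two terms combine into $\mathrm{div}(F_0X_0')$. The divergence theorem on the Lipschitz domain $A$, applied to the Lipschitz field $F_0X_0'$, then gives $\int_{\partial A}F_0\langle X_0',\nu_A\rangle\di\haus^{n-1}$. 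This yields \eqref{eq:first_derivative_0}.

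The main obstacle is justifying the exchange of derivative and integral, since $X_t$ is only assumed Lipschitz and differentiable. I would argue with difference quotients. On the bounded set $A$, the quotients of $F_t\circ X_t$ are uniformly bounded because $F\in C^1$ on a compact neighbourhood and $X_t$ is Lipschitz in $t$. For the quotients of $J_t$, I would assume (as is implicit, and as holds for every perturbation used later, which are smooth in $(x,t)$) that $(x,t)\mapsto X_t(x)$ is $C^1$ in $t$ with $DX_t$ and $DX_t'$ bounded on $A$. Under that assumption, dominated convergence applies and the pointwise derivative identity above also holds almost everywhere. If only the minimal hypotheses are available, I would first prove the formula for smooth $X_t$ and then pass to the limit by mollification. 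Both sides of \eqref{eq:first_derivative} are stable under $W^{1,\infty}$ approximation with uniform Lipschitz bounds: the boundary term is controlled by the trace of a Lipschitz field on a Lipschitz boundary, and the left-hand side converges uniformly in $t$.
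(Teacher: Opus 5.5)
Your argument is correct and is essentially the paper's proof. Both pull back to $A$ by the area formula, differentiate under the integral using Jacobi's formula, recombine the transport and Jacobian terms into a divergence, and conclude with the divergence theorem. The only difference is organizational: you prove the case $t=0$ and transport it via $Y_s=X_{t+s}\circ X_t^{-1}$, which spares you the paper's identification of $\tra\bigl((\nabla X_t)^{-1}\nabla X'_t\bigr)\circ X_t^{-1}$ with $\nabla\cdot(X'_t\circ X_t^{-1})$. Both versions tacitly need more regularity than stated (e.g.\ $DX'_t$ bounded, so that $X_t$ is bi-Lipschitz for small $|t|$), and you at least make this explicit.
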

\begin{proof}
    Formulas analogous to \eqref{eq:first_derivative} can be found, under similar regularity assumptions, in \cite[Section 5]{Henrot_Shape}. We provide an autonomous proof for the convenience of the reader and to present the result in a fashion which is better suited for our applications.

    The proof of \eqref{eq:first_derivative} follows by the area formula \cite[Theorem 3.2.3]{The_Bible} and the divergence theorem. Indeed, by the area formula, \[ \int_{A_t} F_t(x) \di x = \int_{A} F_t(X_t(x))\det(\nabla X_t(x)) \di x,\] where no multiplicities appear since $X_t$ is invertible. The Jacobian $\nabla X_t$ is understood as an approximate Jacobian, which exists almost everywhere since $X_t$ is Lipschitz. Next, we differentiate with respect to $t$ under the integral sign. First, observe that, as $X_0=\Id$, $\nabla X_t(x)$ is invertible in a neighbourhood of $0$, which we can suppose to be $(-\varepsilon,\varepsilon)$ choosing a smaller $\varepsilon>0$ if needed. Then, by Jacobi's formula, \[ \frac{\di}{\di t}\det(\nabla X_t(x)) = \det(\nabla X_t(x)) \tra \left( (\nabla X_t (x))^{-1} \nabla X'_t(x)\right).\] Applying the chain rule to derivate $F_t(X_t(x))$ and by changing variable back to $A_t$ we infer 
    \begin{align*}
        & \frac{\di}{\di t} \int_{A_t} F_t(x) \di x  =  \int_A F'_t(X_t(x))\det(\nabla X_t (x)) \di x \\&+\int_A \left(\langle \nabla F_t(X_t(x)), X'_t(x)\rangle + F_t(X_t(x)) \tra \left( (\nabla X_t (x))^{-1} X'_t(x)\right) \right)\det(\nabla X_t (x)) \di x\\
        & = \int_{A_t} F'_t(x) \di x +\int_{A_t} \langle \nabla F_t(x), X'_t(X^{-1}_t(x))\rangle + F_t(x) \tra \left( (\nabla X_t (X^{-1}_t(x)))^{-1} X'_t(X^{-1}_t(x))\right) \di x.
    \end{align*}
    Observe now that $\nabla  X'_t(X^{-1}_t(x))=(\nabla X_t (X^{-1}_t(x)))^{-1} X'_t(X^{-1}_t(x))$ since, by computing the gradient on both sides of the identity $x=X_t \circ X^{-1}_t(x)$, it follows that $\nabla X^{-1}_t(x)=\left( \nabla X_t(X^{-1}_t (x))\right)^{-1}$. Therefore, 
    \begin{align*}
        & \int_{A_t} \langle \nabla F_t(x), X'_t(X^{-1}_t(x))\rangle + F_t(x) \tra \left( (\nabla X_t (X^{-1}_t(x)))^{-1} X'_t(X^{-1}_t(x))\right) \di x\\
        =& \int_{A_t} \nabla \cdot \left( F_t(x) X'_t(X^{-1}_t(x))\right) \di x = \int_{\partial A_t} F_t(x) \langle X'_t(X^{-1}_t(x)),\nu_{K_t}(x) \rangle \di \haus^{n-1},
    \end{align*}
    where in the last line we have applied the divergence theorem, thus proving \eqref{eq:first_derivative}. Finally, \eqref{eq:first_derivative_0} follows by direct substitution, using the fact that $X_0=\Id$.
\end{proof}

We now introduce our choice for $X_t$, which can be compared with the flow studied in \cite[Section 6.2.3]{Kol_Mil_P}. Consider a smooth convex body $K \subset \R^n$ such that $o \in {\rm int} K$ and $f \in C^2(\sph^{n-1})$. Then, for $t \in (-\varepsilon,\varepsilon)$ for $\varepsilon>0$ sufficiently small, the Wulff shape $K_t=[h_K+tf]$ is a smooth convex body such that $h_{K_t}=h_K+tf$ and $o \in {\rm int}K_t$. We first define $X_t$ on $\partial K$ as
\begin{align*}
    X_t : \partial K &\to \partial K_t \\
    x &\mapsto \nu^{-1}_{K_t}\circ \nu_K(x),
\end{align*}
where $t \in (-\varepsilon,\varepsilon)$. By a well-known property of support functions (compare, e.g., \cite[Corollary 1.7.3]{SchneiderConvexBodiesBrunn2013}) and by the representation of $h_{K_t}$, for every $\xi \in \sph^{n-1}$, \[ \nu^{-1}_{K_t}(\xi)= \nabla h_{K_t}(\xi)=\nabla (h_K+tf)=\nu^{-1}_K(\xi)+t\nabla f(\xi).\] Choosing $\xi=\nu_K(x)$ for $x \in \partial K,$ we infer \[ X_t(x)=x+t \nabla f(\nu_K(x)).\] 
With the next lemma, we extend $X_t$ to $\R^n$. For $K \in \K^n$ such that $o \in {\rm int}K$ we denote by \[ ||x||_K=\inf\{\tau \geq 0 : x \in \tau K \}, \, x \in \R^n,\] its gauge function (compare \cite[Section 1.7]{SchneiderConvexBodiesBrunn2013}).
\begin{lemma}\label{lemma:malaka_vector_field}
    Under the previous assumptions on $K$ and $f$, the vector field 
    \begin{align*}
        X_t: \R^n &\to \R^n \label{eq:vector_field}\\
    x &\mapsto x+ t||x||_K\nabla f\left( \nu_K\left( \frac{x}{||x||_K} \right) \right). 
    \notag
    \end{align*}
    satisfies the conditions of Proposition \ref{prop:derivative}. Moreover, $X_t(K)=K_t=[h_K+tf]$.
\end{lemma}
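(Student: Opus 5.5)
We must check three things for the map $X_t(x)=x+t\|x\|_K\,\nabla f(\nu_K(x/\|x\|_K))$ (with $X_t(o)=o$): it is Lipschitz, it is differentiable in $t$ with $X_0=\Id$, and it is invertible for $|t|$ small. We must also prove $X_t(K)=K_t$.

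The plan is to write $X_t(x)=x+tV(x)$ with $V(x)=\|x\|_K\,G(x/\|x\|_K)$ and $G=\nabla f\circ\nu_K$ on $\partial K$. Here $V$ is the positively $1$-homogeneous extension of $G$.

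\textbf{Regularity of $V$.} Since $\partial K$ is a $C^2$ strictly convex hypersurface, $\nu_K$ is $C^1$ on $\partial K$. Since $f\in C^2(\sph^{n-1})$ extends $1$-homogeneously, $\nabla f$ is $C^1$ on $\sph^{n-1}$. Hence $G\in C^1(\partial K)$.

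Because $o\in\mathrm{int}K$, the gauge $\|\cdot\|_K$ is $C^1$ away from the origin, and $x\mapsto x/\|x\|_K$ is a $C^1$ map $\R^n\setminus\{o\}\to\partial K$. Therefore $V\in C^1(\R^n\setminus\{o\})$.

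By homogeneity, $\nabla V$ is $0$-homogeneous, so it is bounded by its supremum over the compact set $\partial K$. Moreover $|V(x)|\le \|x\|_K\sup|G|\to 0$ as $x\to o$. So $V$ is continuous, piecewise $C^1$ with bounded gradient, and hence globally Lipschitz, say with constant $L$. Consequently $X_t$ is Lipschitz, affine (thus smooth) in $t$, satisfies $X_0=\Id$, and has $X'_t=V$.

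\textbf{Invertibility.} For $|t|<1/L$ the map $X_t=\Id+tV$ is a perturbation of the identity by a contraction. Injectivity follows from
\[
|X_t(x)-X_t(y)|\ge(1-|t|L)\,|x-y|.
\]
For surjectivity, given $z$, the equation $X_t(x)=z$ is equivalent to $x=z-tV(x)$, a fixed-point problem for a contraction, which Banach's fixed-point theorem solves uniquely. Thus $X_t$ is a bi-Lipschitz homeomorphism of $\R^n$.

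We should also check the nondegeneracy used in Proposition~\ref{prop:derivative}. Away from $o$ we have $\nabla X_t=\Id+t\nabla V$, which is invertible for small $|t|$. Since $V$ is Lipschitz, $\nabla X_t$ exists almost everywhere, and the origin is a single point, so it does not matter. Shrinking $\varepsilon$ handles all of this.

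\textbf{Image of $K$.} The identity $X_t(\lambda x)=\lambda X_t(x)$ holds for $\lambda\ge0$, since $X_t$ is positively homogeneous. We already know $X_t(\partial K)=\partial K_t$ from the pointwise formula $X_t(x)=\nu_{K_t}^{-1}(\nu_K(x))$: this map is the composition of the bijection $\nu_K:\partial K\to\sph^{n-1}$ with the bijection $\nabla h_{K_t}=\nu_{K_t}^{-1}:\sph^{n-1}\to\partial K_t$. These are bijections because $K$ and $K_t$ are smooth and strictly convex for small $t$, which was noted before the lemma.

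Since $o\in\mathrm{int}K_t$, both bodies are star-shaped with respect to $o$, so
\[
K=\{\lambda x: x\in\partial K,\ \lambda\in[0,1]\},\qquad K_t=\{\lambda y: y\in\partial K_t,\ \lambda\in[0,1]\}.
\]
Homogeneity of $X_t$ then gives $X_t(K)=K_t$.

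\textbf{Main obstacle.} The one delicate point is the behaviour at the origin, where $V$ need not be differentiable. I would handle it with the Lipschitz bound: $V$ is $C^1$ on $\R^n\setminus\{o\}$ with bounded gradient there and is continuous at $o$. Segments avoiding $o$ give the Lipschitz estimate directly, and segments through $o$ are handled by approximating them with segments that miss $o$, using the continuity of $V$. This confirms that $V$ is Lipschitz on all of $\R^n$, and that is all Proposition~\ref{prop:derivative} requires.
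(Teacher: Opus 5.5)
Your proof is correct, but it obtains invertibility differently from the paper.

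\textbf{The paper's route.} The argument is purely geometric. For every $\tau\ge 0$, the restriction of $X_t$ to $\partial(\tau K)$ is $\nu^{-1}_{\tau K_t}\circ\nu_{\tau K}$, which is a bijection onto $\partial(\tau K_t)$. Since $o$ lies in the interior of both $K$ and $K_t$, the families $\{\partial(\tau K)\}_{\tau\ge0}$ and $\{\partial(\tau K_t)\}_{\tau\ge0}$ each partition $\R^n$. This gives bijectivity of $X_t$ and $X_t(K)=K_t$ in one stroke. It works for every $t$ at which $K_t$ is a smooth strictly convex body containing $o$ in its interior, with no further smallness. The Lipschitz property of $X_t$, however, is simply asserted as clear.

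\textbf{Your route.} You first prove global Lipschitz continuity of the $1$-homogeneous field $V$, including at the origin, which the paper does not discuss. You then invert $\Id+tV$ by the contraction principle for $|t|<1/L$. This may cost a smaller $\varepsilon$, but it gives more:
\begin{itemize}
\item $X_t$ is bi-Lipschitz, with $|X_t(x)-X_t(y)|\ge(1-|t|L)|x-y|$, so $X_t^{-1}$ is Lipschitz.
\item $\nabla X_t=\Id+t\nabla V$ is invertible almost everywhere.
\end{itemize}
Both facts are what the change of variables in the proof of Proposition~\ref{prop:derivative} tacitly relies on.

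Your proof of $X_t(K)=K_t$, via the boundary bijection through the Gauss maps plus positive homogeneity and star-shapedness, is the same idea as the paper's layer decomposition, restricted to $\tau\in[0,1]$.
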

\begin{proof}
    For every $\tau \geq 0$ we define 
    \begin{align*}
    X^\tau_t : \partial (\tau K) &\to \partial (\tau K_t) \\
    x &\mapsto \nu^{-1}_{\tau K_t}\circ \nu_{\tau K}(x).
\end{align*}
Exploiting the properties of the support functions (in particular, $h_{\tau K}=\tau h_K$ for every $\tau \geq 0$), for every $x \in \partial (\tau K)$ 
\begin{align*}
    X^\tau_t(x)=\nu^{-1}_{\tau K_t} \circ \nu_{\tau K}(x)= \nabla h_{\tau K_t}(\nu_{\tau K}(x))=\nabla(h_{\tau K}+\tau t f)(\nu_{\tau K}(x))=x+\tau t \nabla f(\nu_{\tau K}(x)).
\end{align*}
From the properties of the gauge function, $x \in \partial (||x||_K K)$ for every $x \in \R^n$. Therefore, if $x \in \partial \tau K$ then $\tau = ||x||_K$ and $\nu_{\tau K}(x)=\nu_K(x/||x||_K)$. Under the imposed regularity assumptions, if we now define 
\[ X_t(x)=x+t ||x||_K \nabla f \left( \nu_K\left( \frac{x}{||x||_K}\right)\right), \] this vector field is clearly well-defined on $\R^n$, Lipschitz, differentiable for every $t \in (-\varepsilon,\varepsilon)$, and $X_0=\Id$. Finally, to check that $X_t$ is invertible for every $t \in (-\varepsilon,\varepsilon)$, observe that, for every $\tau\geq 0$ and every fixed $t \in (-\varepsilon,\varepsilon),$ $X_t$ is, by construction, a bijection (in fact, a diffeomorphism) between $\partial (\tau K)$ and $\partial (\tau K_t)$. Since both $K$ and $K_t$ have the origin inside their interior, $\R^n$ can be represented as the disjoint unions \[ \dot{\bigcup}_{\tau \geq 0} \partial (\tau K) = \R^n = \dot{\bigcup}_{\tau \geq 0}( \partial (\tau K_t)),\] proving that $X_t$ is a bijection on $\R^n$ for every $t \in (-\varepsilon,\varepsilon)$. Analogously, one checks that $X_t(K)=K_t$, concluding the proof.
\end{proof}

The next step is to establish hypotheses under which the integrand in \eqref{eq:conc_pr} is sufficiently regular to apply Proposition \ref{prop:derivative}. Consider a smooth and strictly convex function $u$. Additionally, assume that there exist $k_2 \geq k_1 >0$ such that $k_1 \Id \leq \nabla^2 u \leq k_2 \Id$. Equivalently, all the eigenvalues of $\nabla^2 u$ lie between $k_1$ and $k_2$. By the properties of the Fenchel--Legendre transform, this implies that $k_2^{-1} \Id \leq \nabla^2 u^* \leq k_1^{-1} \Id$. Therefore, if we choose $\psi \in C^2(\R^n)$ such that its second derivatives are uniformly bounded on $\R^n$, the functions $u^*+t\psi$ and $(u^*+t\psi)^*$ are both smooth and strictly convex for $t$ sufficiently small. We recall the following result, which can be considered folklore under our regularity assumptions. See, for example, \cite[Propositions 5.1 and 5.3]{Artstein_diff_pol}.
\begin{lemma}\label{lemma:first_second}
    Consider $u_t=(u^*+t\psi)^*$ with $u,\psi$ as above. Then, for every $t$ sufficiently small, \[ u'_t(x) =-\psi(\nabla u_t(x)) \quad \text{ and } \quad  u''_t(x)= - \left\langle \nabla^2 u_t(x)   \nabla \psi (\nabla u_t(x)) , \nabla \psi (\nabla u_t(x)) \right \rangle\] for every $x \in \R^n$.  
\end{lemma}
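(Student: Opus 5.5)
The plan is to prove both identities through the variational characterization of the Fenchel--Legendre transform. Set $w_t=u^*+t\psi$. By the standing assumptions $k_2^{-1}\Id\le\nabla^2u^*\le k_1^{-1}\Id$ and $\nabla^2\psi$ is uniformly bounded, so for $|t|$ small $w_t$ is $C^2$ and uniformly strictly convex. Hence for each $x$ the supremum in
\[
u_t(x)=\sup_{y}\{\langle x,y\rangle-w_t(y)\}
\]
is attained at a unique point $y_t(x)$. This point is characterized by $\nabla w_t(y_t(x))=x$, and by the standard duality $\nabla u_t=(\nabla w_t)^{-1}$ it equals $\nabla u_t(x)$. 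Moreover, $\nabla^2u_t(x)=(\nabla^2w_t(y_t(x)))^{-1}$.

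\textbf{Step 1: smooth dependence on $t$.} Apply the implicit function theorem to $G(t,y)=\nabla u^*(y)+t\nabla\psi(y)-x$. Its $y$-Jacobian $\nabla^2w_t$ is invertible, so $t\mapsto y_t(x)$ is $C^1$. Differentiating $G(t,y_t)=0$ in $t$ gives
\[
\nabla^2w_t(y_t)\,\partial_ty_t+\nabla\psi(y_t)=0,
\qquad\text{that is,}\qquad
\partial_ty_t=-\nabla^2u_t(x)\,\nabla\psi(\nabla u_t(x)).
\]

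\textbf{Step 2: first derivative.} Write $u_t(x)=\langle x,y_t\rangle-w_t(y_t)$ and differentiate in $t$. The term involving $\partial_ty_t$ is $\langle x-\nabla w_t(y_t),\partial_ty_t\rangle=0$ by the optimality condition; this is the envelope theorem. What remains is $u'_t(x)=-\psi(y_t)=-\psi(\nabla u_t(x))$, which is the first claim.

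\textbf{Step 3: second derivative.} Differentiate $u'_t(x)=-\psi(y_t(x))$ once more in $t$ using the chain rule, which gives
\[
u''_t(x)=-\langle\nabla\psi(\nabla u_t(x)),\partial_ty_t\rangle .
\]
The remaining task is to substitute the expression for $\partial_ty_t$ from Step 1. The result should then be compared with the stated formula, namely $u''_t(x)=-\langle\nabla^2u_t(x)\nabla\psi(\nabla u_t(x)),\nabla\psi(\nabla u_t(x))\rangle$. Continuity in $x$ follows because everything is composed of $C^1$ maps.

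\textbf{Main obstacle: the sign in Step 3.} The delicate point is the sign bookkeeping in this final substitution. Inserting $\partial_ty_t=-\nabla^2u_t\nabla\psi$ literally gives
\[
u''_t=+\langle\nabla^2u_t\nabla\psi,\nabla\psi\rangle .
\]
This is consistent with the joint convexity of $(x,t)\mapsto u_t(x)$ established in the proof of Lemma \ref{lemma:concavity_principle_PL}, which forces $u''_t\ge0$. So, carried out as planned, Steps 1--3 produce the quadratic form with a plus sign. I do not see a way to obtain the minus sign as the lemma is worded.

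I would therefore re-check the implicit-differentiation step carefully, for instance on the one-dimensional test case $u^*(y)=y^2/2$, $\psi(y)=y$, where $u_t(x)=(x-t)^2/2$ and $u''_t=1=+\nabla^2u_t\,|\nabla\psi|^2$. This test case already confirms the plus sign. Before the lemma is used in the second-derivative computation of $\F$, it should be made explicit which sign convention for $u''_t$ enters that computation.
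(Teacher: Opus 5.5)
Your argument is correct, and the sign problem you flag is a misprint in the statement, not a gap in your proof. For fixed $x$, the map $t\mapsto u_t(x)=\sup_y\{\langle x,y\rangle-u^*(y)-t\psi(y)\}$ is a supremum of affine functions of $t$, hence convex. The printed right-hand side, by contrast, is $\le 0$, and strictly negative wherever $\nabla\psi(\nabla u_t(x))\neq 0$. So your one-dimensional example $u_t(x)=(x-t)^2/2$ is a genuine counterexample to the printed second identity. The correct version is $u''_t(x)=+\langle\nabla^2u_t(x)\nabla\psi(\nabla u_t(x)),\nabla\psi(\nabla u_t(x))\rangle$.

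Your route (implicit function theorem for $y_t(x)=\nabla u_t(x)$, then the envelope theorem, then the chain rule) is the standard proof of the first identity and of the corrected second one. The paper gives no proof of its own and simply cites the result as folklore.

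As for your last concern, the printed formula is not what enters the proof of Proposition \ref{prop:explicit_derivatives}. What is used there is $\partial_t[\psi(\nabla u_t)]=-u''_t=-\langle\nabla^2u_t\nabla\psi(\nabla u_t),\nabla\psi(\nabla u_t)\rangle$, which is your sign. This is why the Brascamp--Lieb term in \eqref{eq:explicit_00} carries a minus sign. Substituting the printed formula literally would flip that term and give a false inequality: take $\rho\equiv 0$ and any non-constant $\varphi$. Hence Proposition \ref{prop:explicit_derivatives} and Theorem \ref{thm:mean_form} are unaffected; only the sign in the displayed formula for $u''_t$ in the lemma needs correcting.
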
\noindent
We now have all the ingredients for explicitly computing the derivatives we need. In the following, we omit the dependence on $x$ when it is clear from the context to simplify the notation. 
\begin{proposition}\label{prop:explicit_derivatives}
    Consider a smooth convex body $K \subset \R^n$ and a function $u\in C^2(\R^n)$ such that there exist $k_2 \geq k_1 >0$ with $k_1 \Id \leq \nabla^2 u \leq k_2 \Id$. For $f \in C^2(\sph^{n-1})$ and $\psi \in C^2(\R^n)$ with bounded second derivatives consider the functional \[ I(t)=\int_{[h_K+tf]} e^{-(u^*+t\psi)^*}\di x.\] Then
    \begin{equation}\label{eq:first_explicit}
        I'(0)=\int_{K} \psi(\nabla u)e^{-u}\di x + \int_{\partial K} f(\nu_K)e^{-u}\di \haus^{n-1}
    \end{equation}
    and
    \begin{align}\label{eq:second_explicit}
        I''(0)  = & \int_K \psi(\nabla u)^2 e^{-u} \di x -\int_K \langle(\nabla^2 u)^{-1}\nabla \left(\psi(\nabla u)\right),\nabla \left(\psi(\nabla u)\right) \rangle e^{-u}\di x  \notag  \\  + & 2\int_{\partial K} f(\nu_K)\psi(\nabla u)e^{-u}\di \haus^{n-1}+ 
        \int_{\partial K}(\tra({\rm II})-\langle \nabla u, \nu_K \rangle)f(\nu_K)^2 e^{-u}\di \haus^{n-1}\\ - & \int_{\partial K} -\langle {\rm II}^{-1}\nabla_{\partial K} (f(\nu_K)),\nabla_{\partial K} (f(\nu_K))\rangle e^{-u}\di \haus^{n-1}. \notag
    \end{align}
\end{proposition}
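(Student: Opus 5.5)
We prove Proposition~\ref{prop:explicit_derivatives} by applying Proposition~\ref{prop:derivative} twice, with the vector field $X_t$ of Lemma~\ref{lemma:malaka_vector_field}, so that $A_t=K_t=[h_K+tf]$, and the integrand $F_t=e^{-u_t}$, $u_t=(u^*+t\psi)^*$. The regularity needed ($F_t\in C^1$ jointly, in fact $C^2$ in $t$) follows from the pinching $k_1\Id\le\nabla^2u\le k_2\Id$ and the boundedness of $\nabla^2\psi$, which keep $u_t$ smooth and strictly convex for small $t$.

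\emph{First derivative.} By \eqref{eq:first_derivative_0} and Lemma~\ref{lemma:first_second}, $F'_0=-u'_0e^{-u}=\psi(\nabla u)e^{-u}$. On $\partial K$ we have $X'_0(x)=\nabla f(\nu_K(x))$. Since $f$ is identified with its $1$-homogeneous extension, Euler's identity gives $\langle\nabla f(\xi),\xi\rangle=f(\xi)$. Hence $\langle X_0',\nu_K\rangle=f(\nu_K)$, which yields \eqref{eq:first_explicit}.

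\emph{Second derivative.} Rather than differentiating the boundary term of \eqref{eq:first_derivative} directly, I would rewrite $I'(t)$ in a form whose $t$-dependence is cleaner. Apply the same computation at time $t$: because $K_{t+s}=[h_{K_t}+sf]$, the derivative at $t$ is the $s$-derivative at $s=0$ for the body $K_t$ and the function $u_t$. Using $(u_t^*+s\psi)^*=u_{t+s}$, this gives
\[
I'(t)=\int_{K_t}\psi(\nabla u_t)e^{-u_t}\di x+\int_{\partial K_t} f(\nu_{K_t})e^{-u_t}\di\haus^{n-1}.
\]
We then differentiate each term at $t=0$.

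\emph{Volume term.} Apply Proposition~\ref{prop:derivative} with $F_t=\psi(\nabla u_t)e^{-u_t}$. This produces $\int_{\partial K}f(\nu_K)\psi(\nabla u)e^{-u}$, together with
\[
\int_K\Bigl(\langle\nabla\psi(\nabla u),\nabla u_0'\rangle+\psi(\nabla u)^2\Bigr)e^{-u},
\]
since $\frac{\partial}{\partial t}\nabla u_t=\nabla u_t'=-\nabla(\psi(\nabla u_t))$. Write $\varphi=\psi(\nabla u)$. Then $\nabla\varphi=\nabla^2u\,\nabla\psi(\nabla u)$, so $\nabla\psi(\nabla u)=(\nabla^2u)^{-1}\nabla\varphi$. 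The cross term is therefore $-\int_K\langle(\nabla^2u)^{-1}\nabla\varphi,\nabla\varphi\rangle e^{-u}$, which gives the Brascamp--Lieb part of \eqref{eq:second_explicit}.

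\emph{Boundary term.} Pull back to $\partial K$ via $X_t$. Because $\nu_{K_t}(X_t(x))=\nu_K(x)$, the factor $f(\nu_{K_t})$ becomes the fixed function $g=f(\nu_K)$ on $\partial K$. It then remains to differentiate
\[
\int_{\partial K} g\,e^{-u_t(X_t)}J_t\di\haus^{n-1},
\]
where $J_t$ is the tangential Jacobian of $X_t|_{\partial K}$. The derivative of the exponential factor gives two pieces.
\begin{itemize}
\item The term $-u'_0=\psi(\nabla u)$ gives a second copy of $\int_{\partial K}g\,\psi(\nabla u)e^{-u}$, producing the factor $2$.
\item The term $-\langle\nabla u,\nabla f(\nu_K)\rangle$ is handled by decomposing $\nabla f(\nu_K)=g\nu_K+{\rm II}^{-1}\nabla_{\partial K}g$. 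This decomposition holds because the tangential part of $\nabla f$ on $\sph^{n-1}$ is the spherical gradient, and $d\nu_K={\rm II}$ transforms $\nabla_{\partial K}g$ accordingly.
\end{itemize}
The Jacobian derivative is $J_0'=\nabla_{\partial K}\cdot\nabla f(\nu_K)=\tra({\rm II})g+\nabla_{\partial K}\cdot({\rm II}^{-1}\nabla_{\partial K}g)$, where the normal component contributes mean curvature times $g$. Integrating by parts on the closed manifold $\partial K$ against the weight $g e^{-u}$, the tangential divergence term combines with the tangential part of $-\langle\nabla u,\cdot\rangle$ and yields $-\int_{\partial K}\langle{\rm II}^{-1}\nabla_{\partial K}g,\nabla_{\partial K}g\rangle e^{-u}$. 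The normal parts give $\int_{\partial K}(\tra{\rm II}-\langle\nabla u,\nu_K\rangle)g^2e^{-u}$. Together these reproduce \eqref{eq:second_explicit}; the last line of that display, $-\int_{\partial K}-\langle\cdot\rangle$, I read as a typographical slip for a term with a single minus sign.

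The main obstacle is this boundary computation. One must verify rigorously the identity $\nabla f(\nu_K)=g\nu_K+{\rm II}^{-1}\nabla_{\partial K}g$ and the formula for the first variation of the area element. One must also justify differentiating under the integral sign, which requires $C^2$ regularity of $h_K$ and $f$ and the uniform control of $u_t$ coming from the pinching bounds.
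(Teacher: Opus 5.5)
Your proposal is correct and follows essentially the paper's route: Proposition~\ref{prop:derivative} with the field of Lemma~\ref{lemma:malaka_vector_field}, a second application of it to the volume term, and a pull-back of the boundary term to $\partial K$ followed by a tangential integration by parts. Your semigroup derivation of $I'(t)$ reproduces the paper's formula, since $\langle X'_t(X_t^{-1}),\nu_{K_t}\rangle=f(\nu_{K_t})$, and your $J'_0=\nabla_{\partial K}\cdot\nabla f(\nu_K)$ equals the paper's Jacobi-formula expression $\tra(\nabla^2_{\sph^{n-1}}f(\nu_K){\rm II})+f(\nu_K)\tra({\rm II})$; you are also right that the double minus in the last line is a typo, since the paper's own computation and its use in the proof of Theorem~\ref{thm:mean_form} carry a single minus sign.
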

\begin{proof}
       By the regularity assumptions above, there exists $\varepsilon>0$ such that, for every $t \in (-\varepsilon,\varepsilon)$ $K_t=[h_K+tf]$ is a smooth and strictly convex body and $u_t=(u^*+t\psi)^* \in C^2(\R^n)$ is convex and the eigenvalues of its Hessian matrix are bounded from below and above by strictly positive constants (see the discussion before Lemma \ref{lemma:first_second}). In particular, $I(t)$ is always well-defined and finite. By construction, $K_t=X_t(K)$, where $X_t$ is chosen as in Lemma \ref{lemma:malaka_vector_field} and we can suppose, without loss of generality, since $I(t)$ is invariant under simultaneous translation of integrand and domain of integration, that $o \in {\rm int}K$. Additionally, by Lemma \ref{lemma:first_second}, $F_t=e^{-{u_t}}$ satisfies the assumptions of Proposition \ref{prop:derivative} and, therefore, we can apply \eqref{eq:first_derivative} to compute $I'(t)$ with the further choice $A=K$. By Lemma \ref{lemma:first_second} \[ F'_t(x)=\varphi(\nabla u_t(x))e^{-u_t(x)}.\] Moreover,
       \begin{align*}
           X'_t(x)= &||x||_K\nabla f(\nu_K(x/||x||_K))\\ =&||x||_K\left( \nabla_{\sph^{n-1}} f(\nu_K(x/||x||_K))+f(\nu_K(x/||x||_K)\nu_K(x/||x||_K) \right),
       \end{align*}
       where we have used that, since $f$ is $1$-homogeneous, $\nabla f (\xi)=\nabla_{\sph^{n-1}}f(\xi)+f(\xi)\xi$ when $\xi \in \sph^{n-1}$, and $\nabla_{\sph^{n-1}}$ denotes the covariant derivative on $\sph^{n-1}$. Notice that, under our regularity assumptions, $F'_t$ and $X'_t$ are continuous for $t \in (-\varepsilon,\varepsilon)$, which implies that we can compute $I'(t)$ in the whole interval, obtaining by \eqref{eq:first_derivative},
    \begin{equation}\label{eq:first_d_explicit}
        I'(t)=\int_{K_t} \psi(\nabla u_t) e^{-u_t} \di x+\int_{\partial K_t} e^{-u_t} \langle X'_t(X^{-1}_t),\nu_{K_t} \rangle \di \haus^{n-1}.
    \end{equation}
    In particular, substituting $t=0$ we obtain \eqref{eq:first_explicit}.

    We now compute $I''(0)$. First, we apply \eqref{eq:first_derivative} to the first integral on the right-hand side of \eqref{eq:first_d_explicit} with the choice $F_t=\psi(\nabla u_t)e^{-u_t}$, where the differentiability in $t$ is guaranteed by Lemma \ref{lemma:first_second}. By the explicit derivatives from Lemma \ref{lemma:first_second} we infer 
    \begin{align} \label{eq:explicit_00}
        \left. \frac{\di}{\di t}\int_{K_t} \psi(\nabla u_t)e^{-u_t}\di x \right|_{t=0}=&\int_K \psi(\nabla u)^2 e^{-u} \di x + \int_{\partial K} f(\nu_K)\psi(\nabla u)e^{-u}\di \haus^{n-1} \\ -&\int_K \langle(\nabla^2 u)^{-1}\nabla \left(\psi(\nabla u)\right),\nabla \left(\psi(\nabla u)\right) \rangle e^{-u}\di x, \notag
    \end{align}
    where we have used the identity \[ \langle(\nabla^2 u)\nabla \psi(\nabla u),\nabla \psi(\nabla u) \rangle=\langle(\nabla^2 u)^{-1}\nabla \left(\psi(\nabla u)\right),\nabla \left(\psi(\nabla u)\right) \rangle.\]
    Concerning the boundary integral in \eqref{eq:first_d_explicit}, observe that, by change of variable,  
    \begin{align} \notag
         &\int_{\partial K_t} e^{-u_t} \langle X'_t(X^{-1}_t),\nu_{K_t} \rangle \di \haus^{n-1}  \\ \label{eq:explicit_0}= & \int_{\partial K}e^{-u_t(X_t)}\langle \nabla f (\nu_K),\nu_K\rangle \det( \nabla_{\partial K} X_t)\di \haus^{n-1}\\ = & \int_{\partial K} f(\nu_K)e^{-u_t(X_t)} \det( \nabla_{\partial K} X_t) \di \haus^{n-1}, \notag
    \end{align}
    where $\nabla_{\partial K} X_t$ is the tangential Jacobian of $X_t$ (see, for example, \cite[Definition 5.4.2]{Henrot_Shape}), which in our case, can be computed explicitly as
    \begin{equation}\label{eq:tan_Jac}
         \nabla_{\partial K} X_t = \Id + t {\rm II}\left(\nabla_{\sph^{n-1}}^2 f (\nu_K) + f(\nu_K)\Id\right).
    \end{equation}
    We now compute the derivatives in $t$ of the integrands in \eqref{eq:explicit_0}. By \eqref{eq:tan_Jac} and Jacobi's rule,
    \begin{equation}\label{eq:explicit_1}
         \left. \frac{\di}{\di t}  \det( \nabla_{\partial K} X_t) \right|_{t=0}= \tra\left(  \nabla_{\sph^{n-1}}^2 f (\nu_K){\rm II}\right)+f(\nu_K)\tra( \rm II).
    \end{equation}
    Continuing our computations, by the chain rule, 
    \begin{align}\label{eq:explicit_2}
        \left. \frac{\di}{ \di t} e^{-u_t(X_t)}\right|_{t=0} = & \psi(\nabla u)e^{-u}-\langle \nabla u,  \nabla f(\nu_K)\rangle e^{-u} \\ = & \psi(\nabla u)e^{-u}-\langle \nabla u, \nu_K\rangle f(\nu_K) e^{-u}-\langle \nabla_{\partial K} u, \nabla_{\sph^{n-1}} f(\nu_K) \rangle e^{-u}. \notag
    \end{align}
    Now, observe that
    \begin{align} \label{eq:explicit_3}
        -&\langle \nabla_{\partial K} u, \nabla_{\sph^{n-1}} f(\nu_K) \rangle f(\nu_K) e^{-u}=\nabla_{\partial K} \cdot \left(e^{-u}f(\nu_K)\nabla_{\sph^{n-1}} f(\nu_K) \right)\\- &\langle \nabla_{\sph^{n-1}}f(\nu_K),\nabla_{\partial K}(f(\nu_K))\rangle -  \tra( \nabla_{\sph^{n-1}}^2 f (\nu_K){\rm II}). \notag
    \end{align}
    By \eqref{eq:explicit_1}, \eqref{eq:explicit_2}, and \eqref{eq:explicit_3}, 
    \begin{align}
       & \left. \frac{\di}{\di t}  f(\nu_K)e^{-u_t(X_t)} \det( \nabla_{\partial K} X_t) \right|_{t=0} \notag \\ \label{eq:explicit_4}
        = & f(\nu_K)\psi(\nabla u) e^{-u} -\langle {\rm II}^{-1}\nabla_{\partial K} (f(\nu_K)),\nabla_{\partial K} (f(\nu_K))\rangle\\+&(\tra({\rm II})-\langle \nabla u, \nu_K \rangle)f(\nu_K)^2 +\nabla_{\partial K} \cdot \left(e^{-u}f(\nu_K)\nabla_{\sph^{n-1}} f(\nu_K) \right), \notag
    \end{align}
    where we have used that, by the chain rule, $\nabla_{\partial K} (f(\nu_K))= \nabla_{\sph^{n-1}}f(\nu_K)  {\rm II}$. Plugging \eqref{eq:explicit_4} in \eqref{eq:explicit_0} and by the divergence theorem (recall that $\partial K_{t}$ has no boundary!), we finally infer 
    \begin{align} \label{eq:explicit_5}
        &\left. \int_{\partial K_t} e^{-u_t} \langle X'_t(X^{-1}_t),\nu_{K_t} \rangle \di \haus^{n-1} \right|_{t=0}=  \int_{\partial K}  f(\nu_K)\psi(\nabla u) e^{-u} \di \haus^{n-1}
        \\ + &\int_{\partial K}(\tra({\rm II})- \langle \nabla u, \nu_K \rangle)f(\nu_K)^2- \langle {\rm II}^{-1}\nabla_{\partial K} (f(\nu_K)),\nabla_{\partial K} (f(\nu_K))\rangle \di \haus^{n-1}. \notag
    \end{align}
    The proof is concluded observing that $I''(0)=\eqref{eq:explicit_00}+\eqref{eq:explicit_5}$, thus proving \eqref{eq:second_explicit}.
\end{proof}

We now have the following.
\begin{theorem}\label{thm:mean_form}
    Consider a smooth and strictly convex body $K \subset \R^n$, and a smooth and strictly convex function $u: \R^n \to \R$. Denote by $\mu$ the log-concave measure with density $\di \mu(x)=e^{-u(x)}\di x$. Then, for every Lipschitz function $\rho : \partial K \to \R$ and locally Lipschitz function $\varphi: \R^n \to \R$, 
    \begin{equation}\label{eq:mean_form}
        \langle \rho, \varphi\rangle_{\rm I} \leq \frac{\langle \rho, \rho \rangle_{\rm P}+\langle \varphi, \varphi \rangle_{\rm BL}}{2},
    \end{equation}
    where the bilinear forms in \eqref{eq:mean_form} are those defined in \eqref{eq:I},\eqref{eq:P}, and \eqref{eq:BL}, respectively.
\end{theorem}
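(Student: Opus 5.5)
The plan is to differentiate the concavity principle of Lemma~\ref{lemma:concavity_principle_PL} twice at $t=0$. Concavity of $\F(t)=\log I(t)$ gives $\F''(0)\le 0$, i.e. $I''(0)I(0)-I'(0)^2\le 0$. We then read this inequality through the explicit formulas of Proposition~\ref{prop:explicit_derivatives}. We first prove \eqref{eq:mean_form} under extra regularity, and afterwards remove the regularity by approximation.

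\textbf{Step 1 (regular case).} Assume $k_1\Id\le\nabla^2u\le k_2\Id$, and that $\rho$ and $\varphi$ are smooth, with $\varphi$ of compact support. Since $\nu_K:\partial K\to\sph^{n-1}$ is a $C^1$ diffeomorphism, we define $f\in C^2(\sph^{n-1})$ by $f(\nu_K(x))=-\rho(x)$, after a preliminary smoothing of $\rho$ so that $f$ is $C^2$. Since $\nabla u$ is a diffeomorphism of $\R^n$, we define $\psi$ by $\psi(\nabla u(x))=\varphi(x)$, i.e. $\psi=\varphi\circ\nabla u^*$. The second derivatives of $\psi$ are bounded because $\varphi$ has compact support and $u^*$ is $C^2$ with controlled Hessian; a mild additional smoothing of $u$ may be needed to make $\psi$ of class $C^2$.

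With these choices, \eqref{eq:first_explicit} reads
\[
I'(0)=\int_K\varphi\di\mu-\int_{\partial K}\rho\di\mu.
\]
From \eqref{eq:second_explicit} we get
\[
I''(0)=\int_K\varphi^2\di\mu-\int_K\langle(\nabla^2u)^{-1}\nabla\varphi,\nabla\varphi\rangle\di\mu-2\int_{\partial K}\rho\varphi\di\mu+\int_{\partial K}{\rm H}_\mu\rho^2\di\mu-\int_{\partial K}\langle{\rm II}^{-1}\nabla_{\partial K}\rho,\nabla_{\partial K}\rho\rangle\di\mu,
\]
taking the sign of the last term as the natural one (the displayed formula has a typographical double minus). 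Dividing $I''(0)I(0)-I'(0)^2\le0$ by $I(0)=\mu(K)$ and expanding the square of $I'(0)$ gives $-\langle\varphi,\varphi\rangle_{\rm BL}-\langle\rho,\rho\rangle_{\rm P}+2\langle\rho,\varphi\rangle_{\rm I}\le0$. This is exactly \eqref{eq:mean_form}; the cross term $\frac{2}{\mu(K)}\int_{\partial K}\rho\di\mu\int_K\varphi\di\mu$ matches the mean-correction in \eqref{eq:I}. The sign choice $f=-\rho$ is what makes the interaction term appear with the correct sign. Choosing $f=+\rho$ would instead give the statement for $-\varphi$, which by symmetry of the forms is also fine.

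\textbf{Step 2 (approximation).} We relax the hypotheses in three stages.
\begin{enumerate}
\item Only $\varphi|_K$ and $\nabla\varphi|_K$ enter the inequality. So a locally Lipschitz $\varphi$ can be replaced by mollified, compactly supported approximations converging in $H^1(K)$, and all three forms are continuous under this convergence.
\item A Lipschitz $\rho$ on the $C^2$ manifold $\partial K$ is approximated by $C^2$ functions $\rho_j$ with $\rho_j\to\rho$ uniformly and $\nabla_{\partial K}\rho_j\to\nabla_{\partial K}\rho$ in $L^2$. Since ${\rm II}^{-1}$ and ${\rm H}_\mu$ are bounded on $\partial K$, the form $\langle\cdot,\cdot\rangle_{\rm P}$ passes to the limit.
\item We remove the uniform bounds $k_1\Id\le\nabla^2 u\le k_2\Id$. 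All quantities involve $u$ only on the compact set $K$, where $\nabla^2u$ is continuous and positive definite, hence pinched between two positive constants. We then modify $u$ outside a neighbourhood of $K$ into a function $\tilde u$ with globally pinched Hessian. For instance, take a convex combination with $\frac{k}{2}|x|^2$ using a cutoff, and check convexity by choosing the transition carefully. Alternatively, use $\tilde u=(u^*|_{B}\text{ extended})^*$.
\end{enumerate}
The applications of Lemma~\ref{lemma:first_second} and Proposition~\ref{prop:explicit_derivatives} only need $u$ near $K$, but they were stated with global bounds, so this extension is required.

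\textbf{Main obstacle.} I expect the delicate point to be this last extension, namely producing a globally pinched, $C^2$, strictly convex $\tilde u$ that agrees with $u$ on a neighbourhood of $K$. Together with it comes the regularity of $\psi=\varphi\circ\nabla u^*$, which needs $u^*\in C^3$ for $\psi\in C^2$. My first attempt would sidestep the latter issue by approximating $u$ by smooth pinched functions $u_j\to u$ in $C^2(K)$ and passing to the limit in all integrals. All integrals in \eqref{eq:mean_form} depend continuously on $u$ in $C^2$ on a neighbourhood of $K$, so this limit is harmless once the approximants exist.
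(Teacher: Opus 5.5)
Your proposal follows the paper's proof: you differentiate the concavity principle of Lemma~\ref{lemma:concavity_principle_PL} twice at $t=0$ via Proposition~\ref{prop:explicit_derivatives}, read $I''(0)\mu(K)-I'(0)^2\le 0$ through $\rho=\pm f\circ\nu_K$ and $\varphi=\psi\circ\nabla u$, and then remove the extra regularity by approximation. Your handling of that last step, including the local-to-global extension of $u$ to a globally pinched potential, is more explicit than the paper's one-line appeal to ``standard approximation''. There is one bookkeeping slip: with $f\circ\nu_K=-\rho$ the cross terms combine to $-2\langle\rho,\varphi\rangle_{\rm I}$, not $+2\langle\rho,\varphi\rangle_{\rm I}$, so it is the paper's choice $f\circ\nu_K=+\rho$ that gives \eqref{eq:mean_form} directly. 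This is harmless, since, as you note yourself, the statement is invariant under $\rho\mapsto-\rho$.
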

\begin{proof}
     As a reduction step, consider $u,\psi$, and $f$ as in Proposition \ref{prop:explicit_derivatives}. The final result will then follow by standard approximation results and the dominated convergence theorem. Using the previous notation, we can write \[\F(t)=\log I(t)=\log \int_{K_t}e^{-u_t} \di x,\] which is concave by Lemma \ref{lemma:concavity_principle_PL}. If $\F(t)$ is twice differentiable at $0$, its concavity is equivalent to the inequality
     \begin{equation}\label{eq:concavity}
         I''(0)-\frac{I'(0)^2}{I(0)} \leq 0,
     \end{equation}
     where $I(0)=\mu(K)\neq 0$. By Proposition \ref{prop:explicit_derivatives}, $\F(t)$ is indeed twice differentiable at $0$, and we write \eqref{eq:concavity} as 
     \begin{align*}
         & 0 \geq \int_K \psi(\nabla u)^2\di \mu  -\int_K \langle(\nabla^2 u)^{-1}\nabla \left(\psi(\nabla u)\right),\nabla \left(\psi(\nabla u)\right) \rangle \di\mu + 2\int_{\partial K} f(\nu_K)\psi(\nabla u)\di \mu \\ + &
        \int_{\partial K}(\tra({\rm II})-\langle \nabla u, \nu_K \rangle)f(\nu_K)^2 e^{-u}-\langle {\rm II}^{-1}\nabla_{\partial K} (f(\nu_K)),\nabla_{\partial K} (f(\nu_K))\rangle e^{-u}\di \mu\\
        -& \frac{1}{\mu(K)}\left(\int_K \psi(\nabla u) \di \mu \right)^2 - \frac{1}{\mu(K)}\left(\int_{\partial K} f(\nu_K) \di \mu \right)^2 -\frac{2}{\mu(K)}\left(\int_K \psi(\nabla u) \di \mu \right)\left(\int_{\partial K} f(\nu_K) \di \mu \right).
     \end{align*}
     Notice now that any two functions $\rho : \partial K \rightarrow \mathbb{R}$ and $\varphi: K \rightarrow \mathbb{R}$ can be written as $\rho=f \circ \nu_K$ and $\varphi=\psi \circ \nabla u$ for some functions $f:\sph^{n-1}\rightarrow \mathbb{R}$ and $\psi: \nabla u(K) \rightarrow \mathbb{R}$, under our assumptions. Also, again by the assumptions on $K$ and $u$, $\nu_K$, and $\nabla u$ preserve the Lipschitz and locally Lipschitz conditions respectively and \eqref{eq:mean_form} is obtained rewriting the inequality above with the notations introduced in \eqref{eq:P}, \eqref{eq:BL}, and \eqref{eq:I}, completing the proof.
\end{proof}

Notice that, for the trivial choice $\rho\equiv 0$, \eqref{eq:mean_form} yields \eqref{eq:Brascamp_Lieb}. Analogously, $\varphi \equiv 0$ in \eqref{eq:mean_form} yields \eqref{eq:Poincare}. To finish this section, we rewrite Theorem \ref{thm:mean_form} in the form of Theorem \ref{thm:multiplicative_form}. To do so, we need the following elementary lemma.
\begin{lemma}\label{lemma:bilinear}
    Consider two real vector spaces $X, Y$, some positive semidefinite bilinear forms $\langle \cdot,\cdot\rangle_a : X \times X \to \R$, $\langle \cdot,\cdot\rangle_b : Y \times Y \to \R$, and a bilinear form $\langle \cdot,\cdot\rangle_c : X \times Y \to \R$. Then,
    \begin{equation}\label{eq:bilinear_av}
        \langle x, y\rangle_c \leq \frac{\langle x,x\rangle_a+\langle y,y\rangle_b}{2} \text{  for every } x \in X, y \in Y,
    \end{equation}
    if and only if
    \begin{equation}\label{eq:bilinear_prod}
        \langle x, y\rangle^2_c \leq \langle x,x\rangle_a\langle y,y\rangle_b \text{  for every } x \in X, y \in Y.
    \end{equation}
    Moreover, equality in \eqref{eq:bilinear_av} implies equality in \eqref{eq:bilinear_prod}.
\end{lemma}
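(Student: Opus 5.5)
The plan is to prove the two implications separately, using only the bilinearity of $\langle\cdot,\cdot\rangle_c$ and the non-negativity of $\langle\cdot,\cdot\rangle_a$ and $\langle\cdot,\cdot\rangle_b$. Throughout, set $A=\langle x,x\rangle_a\ge 0$, $B=\langle y,y\rangle_b\ge 0$ and $C=\langle x,y\rangle_c$.

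\emph{From \eqref{eq:bilinear_prod} to \eqref{eq:bilinear_av}.} If $C\le 0$ there is nothing to prove, since the right-hand side of \eqref{eq:bilinear_av} is non-negative. If $C>0$, then \eqref{eq:bilinear_prod} gives $C\le\sqrt{AB}$, and the arithmetic--geometric mean inequality gives $\sqrt{AB}\le (A+B)/2$.

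\emph{From \eqref{eq:bilinear_av} to \eqref{eq:bilinear_prod}.} Apply \eqref{eq:bilinear_av} to the pair $(\lambda x,\pm\lambda^{-1}y)$ with $\lambda>0$. Bilinearity makes the left-hand side equal to $\pm C$, while the right-hand side becomes $(\lambda^2A+\lambda^{-2}B)/2$. Hence
\[
|C|\le \tfrac12\bigl(\lambda^2A+\lambda^{-2}B\bigr)\qquad\text{for every }\lambda>0 .
\]
\begin{itemize}
\item If $A,B>0$, choose $\lambda^2=\sqrt{B/A}$; this gives $|C|\le\sqrt{AB}$, and squaring yields \eqref{eq:bilinear_prod}.
\item If $A=0$, let $\lambda\to\infty$ to get $|C|\le 0$, so $C=0$ and \eqref{eq:bilinear_prod} holds trivially.
\item If $B=0$, the symmetric argument with $\lambda\to 0$ again gives $C=0$.
\end{itemize}
The degenerate cases are the only point needing care, because there the optimal $\lambda$ does not exist and one must take a limit instead.

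\emph{Equality statement.} Suppose equality holds in \eqref{eq:bilinear_av}, so $C=(A+B)/2\ge 0$. The inequalities \eqref{eq:bilinear_av}, which we now know imply \eqref{eq:bilinear_prod}, give $C\le\sqrt{AB}$. Combining with AM--GM,
\[
\frac{A+B}{2}=C\le\sqrt{AB}\le\frac{A+B}{2},
\]
so all three quantities coincide. In particular $C^2=AB$, which is equality in \eqref{eq:bilinear_prod}.

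With this lemma, Theorem \ref{thm:multiplicative_form} follows directly from Theorem \ref{thm:mean_form}. Take $X$ to be the Lipschitz functions on $\partial K$ with $\langle\cdot,\cdot\rangle_a=\langle\cdot,\cdot\rangle_{\rm P}$, and $Y$ the locally Lipschitz functions with $\langle\cdot,\cdot\rangle_b=\langle\cdot,\cdot\rangle_{\rm BL}$. Both forms are positive semidefinite by \eqref{eq:Poincare} and \eqref{eq:Brascamp_Lieb}.
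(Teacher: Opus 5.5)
Your proof is correct and takes essentially the same route as the paper. Both use rescaling and optimization of the parameter for the forward implication, the arithmetic--geometric mean inequality for the converse, and the same sandwich $\frac{A+B}{2}=C\le\sqrt{AB}\le\frac{A+B}{2}$ for the equality claim. The only cosmetic difference is in the forward implication: you use the balanced substitution $(\lambda x,\pm\lambda^{-1}y)$, which keeps $C$ fixed and handles the degenerate cases by letting $\lambda\to\infty$ or $\lambda\to 0$. The paper instead substitutes $tx$ with $t\in\mathbb{R}$, minimizes the quadratic $t^2A-2tC+B\ge 0$, and treats $A=B=0$ by replacing $x$ with $-x$.
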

\begin{proof}
    If \eqref{eq:bilinear_av} is true, we may substitute $tx$ in place of $x$. In particular, we obtain \[ 0\leq t^2\langle x,x\rangle_a- 2t\langle x, y\rangle_c +\langle y,y\rangle_b\] for every $x \in X, y \in Y, t \in \mathbb{R}$. Suppose that $\langle x,x\rangle_a\neq 0$. Substituting the value $t=\langle x,y \rangle_c/\langle x, x\rangle_a$, which minimizes the right-hand side, we infer \eqref{eq:bilinear_prod} in this case. The analogous procedure works if $\langle y,y\rangle_b \neq 0$ instead. If $\langle x,x\rangle_a=\langle y,y\rangle_b=0$, then \eqref{eq:bilinear_av} states that $\langle x, y\rangle_c \leq 0$. Replacing $x$ with $-x$, we similarly obtain that $\langle x, y\rangle_c \geq 0$. Therefore, $\langle x, y\rangle_c = 0$ and \eqref{eq:bilinear_prod} holds trivially. The converse implication is an immediate application of the Arithmetic/Geometric-mean inequality.

    To conclude the proof, observe that if equality holds in \eqref{eq:bilinear_av}, again by the inequality between Arithmetic and Geometric means, 
    \[ \langle x, y \rangle_c = \frac{\langle x,x\rangle_a+\langle y,y\rangle_b}{2} \geq \sqrt{\langle x,x\rangle_a\langle y,y\rangle_b}  \geq \langle x, y \rangle_c, \] and equality holds in \eqref{eq:bilinear_prod}.
\end{proof}\noindent
\begin{proof}[Proof of Theorem \ref{thm:multiplicative_form}.] This is just an immediate application of Lemma \ref{lemma:bilinear} to Theorem \ref{thm:mean_form}, where $X$ is the space of Lipschitz functions on $\partial K$ with the product $\langle\cdot,\cdot\rangle_{\rm P}$, $Y$ is the space of locally Lipschitz functions with the product $\langle \cdot, \cdot \rangle_{\rm BL}$, and the bilinear form between $X$ and $Y$ is $\langle\cdot,\cdot\rangle_{\rm I}$.
\end{proof}

We conclude this section with a remark on the equality cases of Theorem \ref{thm:multiplicative_form}. As anticipated in Theorem \ref{thm:stability_intro}, $\langle \rho, \rho \rangle_{\rm P}=0$ if and only if $\rho\equiv 0$. Moreover, Livshyts \cite{GalynaStability} has proved that the equality cases of the Brascamp--Lieb inequality are saturated by constants when the inequality is restricted to compact sets. We shall now show that our new infinitesimal forms \eqref{eq:mean_form} and \eqref{eq:multiplicative_form} of the Pr\'ekopa--Leindler inequality \eqref{eq:PL_1} admit further non-trivial equality cases. Observe that, by Lemma \ref{lemma:bilinear}, equality in \eqref{eq:mean_form} implies equality in \eqref{eq:multiplicative_form}, but not vice-versa. That is, \eqref{eq:multiplicative_form} is sharper than \eqref{eq:mean_form}. A full characterization is out of the scope of this paper, and will be the subject of future research.
\begin{proposition}\label{prop:equality}
    Consider $K$ and $\mu$ as in Theorem \ref{thm:multiplicative_form}. Then, for every $\alpha \geq 0, z \in \R,$ and $x_0 \in \R^n$, the functions $\rho=\alpha h_{K+x_0}(\nu_K)$ and $\varphi(x)=\alpha u^*(\nabla u(x-x_0))+z$ give equality in \eqref{eq:mean_form} and, therefore, in \eqref{eq:multiplicative_form}. 
\end{proposition}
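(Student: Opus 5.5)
The plan is a direct computation, organized so that it reduces to two elementary pieces. Set
\[
Q(\rho,\varphi)=\langle \rho,\rho\rangle_{\rm P}+\langle \varphi,\varphi\rangle_{\rm BL}-2\langle \rho,\varphi\rangle_{\rm I}.
\]
By Theorem \ref{thm:mean_form}, $Q$ is a positive semidefinite quadratic form on the product space. Equality in \eqref{eq:mean_form} means $Q(\rho,\varphi)=0$, i.e.\ $(\rho,\varphi)$ lies in the kernel $\{Q=0\}$. For a positive semidefinite form this kernel is a linear subspace, by the Cauchy--Schwarz argument of Lemma \ref{lemma:bilinear} applied to the associated bilinear form. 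Equality in \eqref{eq:multiplicative_form} then follows from the last assertion of Lemma \ref{lemma:bilinear}. Hence it suffices to check that a few generating pairs lie in the kernel and then use linearity, with the scaling $\alpha\ge 0$ being free.

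First, constants are in the kernel. For $\varphi\equiv 1$ we have $\langle 1,1\rangle_{\rm BL}=-\mu(K)+\mu(K)=0$. Also $\langle \rho,1\rangle_{\rm I}=\int_{\partial K}\rho\,\di\mu-\frac{1}{\mu(K)}\int_{\partial K}\rho\,\di\mu\,\mu(K)=0$, and $\langle 1,\varphi\rangle_{\rm BL}=0$ for every $\varphi$. This disposes of $z$.

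Next, I would use $h_{K+x_0}(\nu_K)=\langle x,\nu_K\rangle+\langle x_0,\nu_K\rangle$ on $\partial K$. I would try to split the pair into a dilation part, $\rho_1=\langle x,\nu_K\rangle$ with a $\varphi_1$ built from $u^*(\nabla u)=\langle x,\nabla u\rangle-u$, and a translation part, $\rho_2=\langle x_0,\nu_K\rangle$ with the corresponding first-order term of $\varphi$ in $x_0$.

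To verify $Q=0$ on these pieces, I would compute each term explicitly and integrate by parts. On the boundary side the identities are $\nabla_{\partial K}\langle x,\nu_K\rangle={\rm II}\,x^{T}$ and $\nabla_{\partial K}\langle x_0,\nu_K\rangle={\rm II}\,x_0^{T}$. Together with the weighted divergence theorem on $\partial K$ (as in \eqref{eq:explicit_3}), these turn $\int\langle {\rm II}^{-1}\nabla\rho,\nabla\rho\rangle\di\mu-\int {\rm H}_\mu\rho^2\di\mu$ into boundary integrals of $\langle x,\nabla u\rangle\rho$-type. On the interior side, $\nabla(u^*(\nabla u))=\nabla^2u\,x$, so $\langle(\nabla^2u)^{-1}\nabla\varphi,\nabla\varphi\rangle=\langle \nabla^2 u\,x,x\rangle$. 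The divergence theorem on $K$ applied to $e^{-u}$ times vector fields such as $x\,\varphi$ then converts the interior integrals into boundary terms, which should cancel against the cross term $2\langle\rho,\varphi\rangle_{\rm I}$.

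As a cross-check I would also interpret the pair through Lemma \ref{lemma:concavity_principle_PL}. With $f=\alpha h_{K+x_0}$ and $\psi=\alpha u^*+\alpha\langle x_0,\cdot\rangle+z$ one gets $K_t=(1+t\alpha)K+t\alpha x_0$ and $u_t(x)=(1+t\alpha)u\big((x-t\alpha x_0)/(1+t\alpha)\big)-tz$. A change of variables then gives $I(t)$ in closed form, so $\F''(0)$ can be read off directly.

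The main obstacle is precisely matching the $x_0$-dependence of $\varphi$ as stated ($u^*(\nabla u(x-x_0))$ rather than a term linear in $x_0$) with a pair that actually lies in the kernel. I would first carry out the closed-form computation of $I(t)$ above, to see which combination of the dilation and translation pairs makes $\F$ affine to second order at $t=0$. Only then would I commit to the integration-by-parts bookkeeping for that specific pair.
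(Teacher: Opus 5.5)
Your reduction is sound as far as it goes. The null set of the positive semidefinite form $Q$ is a subspace, and $Q(v+k)=Q(v)$ for every $k$ in it. It contains the constant pair $(0,1)$ and the translation pair $(\langle x_0,\nu_K\rangle,\langle x_0,\nabla u\rangle)$: take $f=\psi=\langle x_0,\cdot\rangle$, so $K_t=K+tx_0$, $u_t=u(\cdot-tx_0)$ and $I(t)\equiv\mu(K)$.

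What fails is the step everything rests on. The dilation pair $(h_K(\nu_K),u^*(\nabla u))$ is \emph{not} in the kernel, so no integration-by-parts bookkeeping will make the terms cancel. Your own cross-check shows this. With $f=\alpha h_{K+x_0}$ and $\psi=\alpha u^*+\alpha\langle x_0,\cdot\rangle+z$, the substitution $x=(1+t\alpha)y+t\alpha x_0$ gives
\[
I(t)=e^{tz}(1+t\alpha)^n\int_K e^{-(1+t\alpha)u(y)}\di y ,
\]
so $x_0$ drops out. Since $I''(0)-I'(0)^2/I(0)=-Q(\rho,\varphi)$,
\[
Q(\rho,\varphi)=-\mu(K)\,\F''(0)=\alpha^2\mu(K)\bigl(n-\mathrm{Var}_{\bar\mu}(u)\bigr),\qquad \bar\mu=\mu(K)^{-1}\mu|_K .
\]
This is strictly positive for all $\alpha>0$ as soon as $\max_K u-\min_K u<2\sqrt n$, because then $\mathrm{Var}_{\bar\mu}(u)\le(\max_K u-\min_K u)^2/4<n$. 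For example, take $u=\|x\|^2/2$ and $K$ the unit ball, where $\rho\equiv\alpha$ and $\varphi=\alpha\|x\|^2/2$.

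A check straight from \eqref{eq:P}, \eqref{eq:BL}, \eqref{eq:I} confirms this. Take $n=1$, $K=[-1,1]$, $u=x^2/2$, $\alpha=1$, so ${\rm H}_\mu\equiv-1$ on $\partial K=\{\pm1\}$. Then $Q=\tfrac12\mu(K)+e^{-1/2}+e^{-1}/\mu(K)>0$. Moreover $\langle\rho,\varphi\rangle_{\rm I}^2\approx 0.18<0.96\approx\langle\rho,\rho\rangle_{\rm P}\langle\varphi,\varphi\rangle_{\rm BL}$, so \eqref{eq:multiplicative_form} is strict as well. Because $Q$ is unchanged by adding kernel elements, no recombination with translation or constant pairs can repair this. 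The real obstruction is not the $x_0$-dependence you flagged but the dilation piece itself, and the statement is false for every $\alpha>0$, already at $x_0=0$.

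What your framework does establish is equality for the pairs $(\langle x_0,\nu_K\rangle,\langle x_0,\nabla u\rangle+z)$. This is exactly what the Pr\'ekopa--Leindler equality case delivers: equality there holds for translates $v(x)=w(x-x_0)+z$, not for homothetic epigraphs. The paper's proof breaks at precisely this point, in two ways. First, it invokes equality for homothetic epigraphs. Second, its identity $(u^*+t\alpha u^*)^*+I_{(1+t\alpha)K}=((1-t\alpha)\sq u)\square(t\alpha\sq u)$ is incorrect. By \eqref{eq:sums_FL} the right-hand side is just $u$, while the left-hand side is $(1+t\alpha)\sq(u+I_K)$. Its log-integral is the non-affine function $n\log(1+t\alpha)+\log\int_K e^{-(1+t\alpha)u}\di y$ found above.
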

\begin{proof}
    From the equality cases of the Pr\'ekopa--Leindler inequality it is well known that, for every $w, v \in \Conv(\R^n)$ such that $e^{-w}$ and $e^{-v}$ are integrable,
    \begin{equation}\label{eq:equality_cases}
        \int_{\R^n} e^{-((1-t)\sq w) \square (t\sq v)} \di x = \left( \int_{\R^n} e^{-w} \di x\right)^{(1-t)}\left( \int_{\R^n} e^{-v} \di x\right)^t
    \end{equation}
    if and only if the epigraphs of $w$ and $v$ are homothetic. That is, there exist $\alpha \geq 0, z \in \R,$ and $x_0 \in \R^n$ such that $v(x)=\alpha w(x-x_0)+z$. 
    Suppose now that, in Theorem \ref{thm:mean_form}, $\rho=\alpha h_K(\nu_K)$ and $\varphi=\alpha u^*(\nabla u)$, In particular, $\psi=\alpha u^*$ in the concavity principle of \eqref{eq:conc_pr}. Recall that \eqref{eq:mean_form} is obtained from differentiating twice \[ \F(t)=\log \int_{\R^n}e^{-(u^*+t\alpha u^*)^*-I_{(1+t\alpha)K}}\di x.\] By the properties of the Fenchel--Legendre transform and \eqref{eq:sums_FL}, \[ (u^*+t\alpha u^*)^*+I_{(1+t\alpha)K}=((1-t\alpha)\sq u)\square (t\alpha \sq u).\]In particular, by \eqref{eq:equality_cases}, $\F(t)$ is linear and, therefore, $\F''(0)=0$, which implies equality in \eqref{eq:mean_form}. In turn, by Lemma \ref{lemma:bilinear}, equality is attained in \eqref{eq:multiplicative_form}. The complete statement is obtained considering arbitrary translations of $h_K$ and $u^*$. 
\end{proof}

\section{Stability for the Poincar\'e inequality}

The aim of this section is to prove Theorem \ref{thm:stability_intro}. We point out that both the stability and the equality cases of \eqref{eq:Poincare} could be alternatively obtained by a careful inspection of its proof in \cite{Kol_Mil_P} via the $L_2$ method. Here, instead, we propose a different approach based on Theorem \ref{thm:multiplicative_form}. Further applications will be shown in the remaining sections.

\medskip

As a first step, we want to extend the validity of Theorem \ref{thm:multiplicative_form} to encompass two natural Sobolev spaces as the test functions in the inequality \eqref{eq:multiplicative_form}. For the convenience of the reader, we quickly recap some basics from the theory of (weighted) Sobolev spaces. Since the admissible measures $\mu$ in Theorem~\ref{thm:multiplicative_form} are absolutely continuous with respect to the Lebesgue measure restricted to $K$, and all sets involved are compact and sufficiently regular, the standard results from the classical theory of Sobolev spaces extend to our setting with only minor modifications. We refrain from presenting the full theory of traces, limiting the exposition to the essential material needed below. For complete references, see, for instance, \cite{SobBook,BrezisBook,LeoniBook}.

Let $K \subset \mathbb{R}^n$ be a smooth convex body and $u:\R^n \to \R$ a smooth and strictly convex function, and consider on $K$ the log-concave measure $\mu$ with density $\di \mu(x)=e^{-u(x)}\di x$. For $\psi \in C^1(K)$, consider the weighted Sobolev norm
\[
\|\psi\|_{H^{1}(K,\mu)}^{2}
  = \int_{K} \left(\psi^{2} + \|\nabla\psi\|^{2}\right)\, \di\mu,
\]
where $\|x \|^2=\langle x, x \rangle$ for every $x \in \R^n$. The weighted Sobolev space $H^{1}(K,\mu)$ is defined as the completion of $C^{\infty}(K)$ under $\|\cdot\|_{H^{1}(K,\mu)}$. On $\partial K$ we set, for $\rho \in C^{1}(\partial K)$,
\[
\|\rho\|_{H^{1}(\partial K,\mu)}^{2}
   = \int_{\partial K}\left(\rho^{2} + \|\nabla_{\partial K}\rho\|^{2}\right)\, \di\mu.
\]
Again for $\rho \in C^{1}(\partial K)$,  we denote its fractional Sobolev norm by
\begin{equation}\label{eq:Fractional Sobolev norm}
\|\rho\|^2_{H^{1/2}(\partial K,\mu)}
   = \int_{\partial K}\rho^2 \di \mu+\int_{\partial K}\!\int_{\partial K}
    \frac{|\rho(x)-\rho(y)|^{2}}{|x-y|^{n}}
    \di \mu(x) \di\mu(y).
\end{equation}
The spaces $H^{1}(\partial K,\mu)$ and $H^{1/2}(\partial K,\mu)$ correspond to the completions of the space $C^{\infty}(\partial K)$ with the respective norms. 

For a function $\psi \in C^{1}(K)\cap H^{1}(K,\mu)$, we can consider the trace operator, which is defined on $C^{1}(K)$ by
\[Tr(\psi) = \psi |_{\partial K}, \,\,\, \forall \psi \in C^{1}(K),\]
which extends to $H^{1}(K,\mu)$ by continuity. Given now a function $\rho \in H^{1/2}(\partial K,\mu)$, classical results in Sobolev extension theory, see, e.g., \cite[Th. 18.40]{LeoniBook}, guarantee the existence of a constant $C_{1}=C_{1}(K,\mu)$ and a function $\tilde{\psi} \in H^{1}(K,\mu)$ such that $Tr\left(\tilde{\psi}\right) = \rho $ and
\begin{equation}\label{eq:fractional_extension}
\|\tilde{\psi}\|_{H^{1}(K,\mu)} \leq C_{1} \,\|\rho\|_{H^{1/2}(\partial K,\mu)}.     
\end{equation}
Recall also that the space $H^{1}(\partial K,\mu)$, under our assumptions on $\mu$ and $K$, can be continuously embedded into  $H^{1/2}(\partial K,\mu)$, see e.g. \cite[Th.6.2]{SobBook}. As a consequence, we may find a constant $C_{2} = C_{2}(K,\mu)$ such that
\begin{equation}\label{eq:frac_boundary_embedding}
     \|\rho\|_{H^{1/2}(\partial K,\mu)}
     \le C_2\,\|\rho\|_{H^{1}(\partial K,\mu)}
\end{equation}
for all $\rho \in C^1(\partial K)$. For the rest of the section, $c,\tilde{c},C,\text{etc}$ will denote constants which depend only on the measure $\mu$ and the convex set $K$ and whose value may change at different steps. 

Since we are interested in applying \eqref{eq:multiplicative_form} to functions in $H^{1}(\partial K,\mu)$ and $H^{1}(K,\mu)$, let us justify that this passage is indeed possible. Indeed, usual approximation arguments via smooth functions will suffice. 
\begin{lemma}\label{lem:continuity_both_forms}
    Suppose that $K$ and $\mu$ satisfy the assumptions of Theorem~\ref{thm:multiplicative_form}. Then there exist constants $c,\tilde{c}$ which depend only on the measure and on $K$ such that
    \begin{equation}\label{eq:continuity_poincare}
    \langle \rho ,\rho \rangle_{\rm P} \leq c\|\rho\|^2_{H^{1}(\partial K,\mu)} \,\, \text{for all} \,\, \rho \in H^{1}(\partial K,\mu),
    \end{equation}
    and 
    \begin{equation}\label{eq:continuity_BL}
        \langle \psi, \psi \rangle_{\rm BL} 
        \leq  \tilde{c} \|\psi\|^2_{H^{1}(K,\mu)} \,\, \text{for all} \,\,\psi \in H^1(K,\mu).
    \end{equation}
Moreover, the interaction term $\langle \cdot, \cdot \rangle_{\rm I}$ is continuous in $H^{1}(\partial K,\mu)\times H^{1}(K,\mu)$.    
\end{lemma}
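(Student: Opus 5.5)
The plan is to estimate each of the three terms in the definitions \eqref{eq:P}, \eqref{eq:BL}, and \eqref{eq:I} separately for smooth functions. These estimates depend only on bounds for ${\rm II}^{-1}$, ${\rm H}_\mu$, $(\nabla^2 u)^{-1}$ and the density $e^{-u}$ on the compact sets $\partial K$ and $K$. We then pass to $H^1$ by density, since $C^\infty(\partial K)$ and $C^\infty(K)$ are dense by definition.

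For \eqref{eq:continuity_poincare} we use the following facts. Since $\partial K$ is a compact $C^2$ strictly convex hypersurface, the principal curvatures are bounded below by some $\kappa_0>0$, so ${\rm II}^{-1}\le \kappa_0^{-1}\Id$. The first term of $\langle\rho,\rho\rangle_{\rm P}$ is therefore at most $\kappa_0^{-1}\int_{\partial K}\|\nabla_{\partial K}\rho\|^2\di\mu$. The curvature term is handled by discarding it when it is nonpositive, or bounding it by $\sup_{\partial K}|{\rm H}_\mu|\int_{\partial K}\rho^2\di\mu$; here ${\rm H}_\mu=\tra({\rm II})-\langle\nabla u,\nu_K\rangle$ is continuous on the compact $\partial K$. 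The last term is bounded using Cauchy--Schwarz: $(\int_{\partial K}\rho\di\mu)^2\le \mu(\partial K)\int_{\partial K}\rho^2\di\mu$, where $\mu(\partial K)$ denotes the weighted surface measure. Summing gives the constant $c$.

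For \eqref{eq:continuity_BL} the argument is analogous on $K$. By continuity and positive definiteness of $\nabla^2 u$ on the compact set $K$, its smallest eigenvalue there is some $\lambda_0>0$, so $(\nabla^2 u)^{-1}\le\lambda_0^{-1}\Id$ on $K$. The term $-\int_K\psi^2\di\mu$ is nonpositive, and the mean term is controlled by $\int_K\psi^2\di\mu$ via Cauchy--Schwarz. The interaction term $\langle\rho,\psi\rangle_{\rm I}$ is handled in the same way. We bound $|\int_{\partial K}\rho\,Tr(\psi)\di\mu|\le\|\rho\|_{L^2(\partial K,\mu)}\|Tr(\psi)\|_{L^2(\partial K,\mu)}$ and apply the trace inequality $\|Tr(\psi)\|_{L^2(\partial K,\mu)}\le C\|\psi\|_{H^1(K,\mu)}$, which holds for Lipschitz domains since $e^{-u}$ is bounded above and below on $K$. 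The product of means is bounded by Cauchy--Schwarz. Together these give $|\langle\rho,\psi\rangle_{\rm I}|\le C\|\rho\|_{H^1(\partial K,\mu)}\|\psi\|_{H^1(K,\mu)}$, which is continuity of the bilinear form.

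Finally, all three forms are bounded bilinear forms on the dense smooth subspaces, by the polarized versions of the same estimates. Hence they extend uniquely and continuously to $H^1(\partial K,\mu)$ and $H^1(K,\mu)$, and the inequalities persist in the limit. The only step needing care is the trace estimate in the interaction term. I would obtain it from the unweighted trace theorem on the Lipschitz domain $K$, using the comparability of $\mu$ with Lebesgue measure on $K$, rather than from \eqref{eq:fractional_extension}, which goes in the opposite direction.
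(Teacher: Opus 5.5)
Your proposal is correct and follows essentially the same route as the paper. Both arguments use uniform bounds on ${\rm II}^{-1}$ over $\partial K$ and on $(\nabla^2 u)^{-1}$ over $K$, boundedness of ${\rm H}_\mu$ on the compact set $\partial K$, Cauchy--Schwarz for the squared-mean terms, and continuity of the trace $H^1(K,\mu)\to L^2(\partial K,\mu)$ for the interaction term. The differences are minor: you bound $|{\rm H}_\mu|$ by continuity rather than through the Lipschitz estimate on $\nabla u$, and you spell out the density extension and the justification of the trace inequality (via comparability of $\mu$ with Lebesgue measure on $K$), which the paper leaves implicit.
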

\begin{proof} Let us begin by noting that since $K$ is compact there exists an $R>0$ such that $K \subseteq B_{R}(0)$. Since the density of $\mu$ is strictly log-concave and $C^2$ smooth, we may find strictly positive constants $k_{1}$ and $k_{2}$ such that 
\begin{equation}\label{eq:restricted_Hessian_pinching} \notag
k_{1}\mathrm{Id} \leq \nabla^2u(x) \leq k_{2}\mathrm{Id}, \,\, \text{for all} \,\, x \in B_{R}(0).
\end{equation}
The above implies, in particular, that $\|\nabla u\|$ restricted to $B_{R}(0)$ is Lipschitz with constant bounded by $k_{2}$, thus in particular
\begin{equation}\label{eq:Lipschitz_restriction}
\|\nabla u(x)\| \leq k_{2}\|x\|+\|\nabla u(0)\| \,\, \text{for all} \,\, x \in \partial K.
\end{equation}
In addition, since $K$ is strictly convex and compact, the minimal and maximal curvatures of $\partial K$
\begin{equation}\label{eq:min_max_curvature} \notag
m_{1} = \min_{x \in \partial K, \,\,i=1,\dots,n-1} \kappa_{i}(x) \,\, \text{and} \,\, m_{2} = \max_{x \in \partial K, \,\,i=1,\dots,n-1} \kappa_{i}(x)
\end{equation}
are strictly positive and finite. In particular, the second fundamental form of the boundary satisfies $ m_{1} \mathrm{Id} \leq {\rm{II}} \leq m_{2}\mathrm{Id}$ and thus ${\rm{II}}^{-1} \leq m^{-1}_{1}\mathrm{Id} $ as well as $\mathrm{Tr}({\rm{II}}) \leq (n-1)m_{2}$. Furthermore, for the generalized mean curvature notice that \eqref{eq:Lipschitz_restriction} implies, for every $x \in \partial K$,
\begin{align} \notag 
    |{\rm H}_{\mu}| &= |\tra({\rm{II}}) - \langle \nabla u, \nu_{K}\rangle | \leq \tra({\rm{II}})+ \|\nabla u \| \\
    &\leq (n-1)m_{2}+c_{2}\|x\|+ \|\nabla u(0)\| \leq (n-1)m_{2}+c_{2}R +c \leq C  \label{eq:Generalised_Mean_Estimate}
\end{align}
With these preliminary estimates in place, notice that
\begin{align*}
    \langle \rho , \rho \rangle_{\rm P} &\leq \int_{\partial K} m^{-1}_{1}\|\nabla_{\partial K}\rho\|^2\di \mu-\int_{\partial K}{\rm H}_{\mu}\rho^2\di \mu+\frac{1}{\mu(K)}\left(\int_{\partial K} \rho \di \mu\right)^2 \\
    &\leq m^{-1}_{1}\int_{\partial K}\|\nabla_{\partial K}\rho\|^2\di \mu + \int_{\partial K} |{\rm H}_{\mu}|\rho^2\di \mu +\frac{\mu(\partial K)}{\mu(K)}\int_{\partial K} \rho^2 \di \mu \\
    &\leq c \|\rho\|^2_{H^{1}(\partial K,\mu)},
\end{align*}
where in the second line we applied Cauchy--Schwarz as well as the estimate in \eqref{eq:Generalised_Mean_Estimate}. On the other hand notice that since $\nabla^2u \geq k_{1}\mathrm{Id}$ we have that, equivalently, $(\nabla^2u)^{-1} \leq k^{-1}_{1}\mathrm{Id}$. Thus, the proof is concluded by observing that
\begin{align}\label{eq:Prop 3.1.1}
        \notag 
        \langle \psi,\psi \rangle_{\rm BL}
        &\leq k^{-1}_1\!\int_{K}\|\nabla \psi \|^2 \, \di\mu
            - \int_{K}\psi^2\, \di\mu
            + \frac{1}{\mu(K)}\left(\int_{K}\psi \, \di\mu\right)^2  
        \\
        \notag
        &\leq k^{-1}_1\int_{K} \|\nabla\psi\|^2 \di \mu \leq k^{-1}_1\!\left[\int_{K} \big(\|\nabla\psi\|^2 + \psi^2\big)\, \di\mu\right] 
        \\
        &= k^{-1}_1 \|\psi \|^{2}_{H^{1}(K,\mu)}.
    \end{align}
For the final claim of the lemma, recall that for every $\psi \in H^{1}(K,\mu),\rho \in H^{1}(\partial K,\mu) $, we have
\[\langle \rho, \psi\rangle_{\rm I} = \int_{\partial K} \rho\psi\di\mu -\frac{1}{\mu(K)}\int_{\partial K}\rho\di\mu\int_{K}\psi\di\mu\]
it just remains to observe that by the continuity of the trace operation regarded as a function $Tr:H^{1}(K,\mu)\rightarrow L^2(\partial K, \mu)$ (where $L^2(\partial K, \mu)$ denotes the space of square integrable functions on $\partial K$ with respect to $\mu$), every boundary term in the expression is continuous in $L^{2}(\partial K, \mu)$, thus in particular in $H^1(\partial K,\mu)$, since we have no derivative terms. Finally since the term $\int_{K}\psi \di \mu$ is also continuous in $L^2(K, \mu)$, as $K$ has finite measure, $\langle \cdot,\cdot\rangle_{\rm I}$ is continuous in the product space. This concludes the proof of the lemma.
\end{proof}

We now obtain a technical estimate, resembling an interpolation inequality.
\begin{proposition}\label{prop:Boundary Sobolev form}
    Assume that $K$ and $\mu$ satisfy the assumptions of Theorem~\ref{thm:multiplicative_form}. Then there exists a constant $c>0$, depending only on $K$ and $\mu$, such that
    \[
    \|\rho\|^2_{L^2(\partial K, \mu)}
    \;\le\;
    c\, \langle \rho,\rho \rangle_P^{1/2}\,
       \|\rho\|_{H^{1}(\partial K,\mu)}
    \qquad\text{for all } \rho \in H^{1}(\partial K,\mu).
    \]
\end{proposition}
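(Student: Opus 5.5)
The plan is to apply \eqref{eq:multiplicative_form}, extended to Sobolev functions by Lemma~\ref{lem:continuity_both_forms}, with $\rho$ fixed and $\varphi=\tilde\psi$ a Sobolev extension of $\rho$ to $K$. With this choice the interaction term $\langle\rho,\tilde\psi\rangle_{\rm I}$ produces $\|\rho\|^2_{L^2(\partial K,\mu)}$ up to a mean correction, while $\langle\tilde\psi,\tilde\psi\rangle_{\rm BL}$ is controlled by $\|\rho\|^2_{H^1(\partial K,\mu)}$. Taking square roots in \eqref{eq:multiplicative_form} then gives the claimed interpolation bound.

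\textbf{Step 1: reduction to zero boundary mean.} By density and the continuity statements of Lemma~\ref{lem:continuity_both_forms}, it suffices to treat $\rho\in C^\infty(\partial K)$. Split $\rho=\rho_0+m$, where $m=\mu(\partial K)^{-1}\int_{\partial K}\rho\,\di\mu$ is the boundary mean. This splitting is awkward for $\langle\cdot,\cdot\rangle_{\rm P}$, so instead I will modify the extension. Let $\tilde\psi$ be the extension of $\rho$ given by \eqref{eq:fractional_extension}. Set
\[
\varphi=\tilde\psi-\frac{1}{\mu(K)}\int_K\tilde\psi\,\di\mu ,
\]
so that $\int_K\varphi\,\di\mu=0$. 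Then
\[
\langle\rho,\varphi\rangle_{\rm I}=\int_{\partial K}\rho\varphi\,\di\mu=\|\rho\|^2_{L^2(\partial K,\mu)}-\frac{1}{\mu(K)}\left(\int_{\partial K}\rho\,\di\mu\right)\left(\int_K\tilde\psi\,\di\mu\right).
\]
The leftover term must be absorbed. To avoid handling it, I would pick the extension so that $\int_K\tilde\psi\,\di\mu=0$. Concretely, take $\tilde\psi=E\rho-\beta\,\chi$, where $E$ is the extension operator and $\chi\in C_c^\infty(\mathrm{int}K)$ is fixed with $\int_K\chi\,\di\mu=1$. Since $\chi$ vanishes near $\partial K$, the trace is unchanged, and $|\beta|=|\int_K E\rho\,\di\mu|\le C\|E\rho\|_{H^1(K,\mu)}$. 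The modified $\tilde\psi$ therefore still satisfies $\|\tilde\psi\|_{H^1(K,\mu)}\le C\|\rho\|_{H^{1/2}}\le C\|\rho\|_{H^1(\partial K,\mu)}$, by \eqref{eq:fractional_extension} and \eqref{eq:frac_boundary_embedding}. With this choice, $\langle\rho,\tilde\psi\rangle_{\rm I}=\|\rho\|^2_{L^2(\partial K,\mu)}$ exactly.

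\textbf{Step 2: conclusion.} By \eqref{eq:continuity_BL}, $\langle\tilde\psi,\tilde\psi\rangle_{\rm BL}\le \tilde c\,\|\tilde\psi\|^2_{H^1(K,\mu)}\le C\|\rho\|^2_{H^1(\partial K,\mu)}$. Applying \eqref{eq:multiplicative_form} gives
\[
\|\rho\|^4_{L^2(\partial K,\mu)}\le\langle\rho,\rho\rangle_{\rm P}\,C\|\rho\|^2_{H^1(\partial K,\mu)} .
\]
Taking square roots yields the statement.

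\textbf{Main obstacle.} The delicate point is justifying \eqref{eq:multiplicative_form} for $\rho\in H^1(\partial K,\mu)$ and $\varphi\in H^1(K,\mu)$. Theorem~\ref{thm:multiplicative_form} is stated for Lipschitz $\rho$ and locally Lipschitz $\varphi$ on $\R^n$, while $\tilde\psi$ is only defined on $K$ and is Sobolev. I would approximate $\tilde\psi$ in $H^1(K,\mu)$ by functions in $C^\infty(K)$, extend these to $\R^n$, which does not affect the forms since they only involve $K$, and similarly approximate $\rho$. Passing to the limit on both sides then uses the continuity in Lemma~\ref{lem:continuity_both_forms}: continuity of $\langle\cdot,\cdot\rangle_{\rm I}$, together with the quadratic bounds \eqref{eq:continuity_poincare} and \eqref{eq:continuity_BL} applied to differences, which imply continuity of the quadratic forms.
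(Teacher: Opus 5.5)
Your proposal is correct and follows essentially the same route as the paper: you correct a bounded Sobolev extension of $\rho$ by an interior bump (your $\chi$, the paper's $g$ vanishing on $\partial K$) to get a zero-mean extension with the same trace, so that $\langle \rho,\tilde\psi\rangle_{\rm I}=\|\rho\|^2_{L^2(\partial K,\mu)}$, and then combine \eqref{eq:multiplicative_form} with \eqref{eq:continuity_BL}, \eqref{eq:fractional_extension} and \eqref{eq:frac_boundary_embedding}. Your approximation argument for extending \eqref{eq:multiplicative_form} to $H^1(\partial K,\mu)\times H^1(K,\mu)$ is slightly more careful than the paper's, since the paper takes an extension in $C^1(K)$ whose trace is exactly $\rho$, which is not available for a general $\rho\in H^1(\partial K,\mu)$.
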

\begin{proof}
We begin by noticing that combining \eqref{eq:continuity_BL} with \eqref{eq:multiplicative_form}, for every $\psi \in H^{1}(K,\mu)$ such that $\int_{K}\psi \, \di \mu = 0$ and every $\rho \in H^{1}(\partial K,\mu)$ we have
    \begin{equation}\label{eq:Prop 3.1.2}
        \int_{\partial K} \rho\,\psi \, d\mu 
        \;\le\;
        \langle \rho,\rho \rangle_{\rm P}^{1/2}\,
        \langle \psi,\psi \rangle_{\rm BL}^{1/2}
        \;\le\;
        c\,\langle \rho,\rho \rangle_{\rm P}^{1/2}
        \|\psi \|_{H^{1}(K,\mu)} .
    \end{equation}
Fix now $\rho \in H^{1}(\partial K,\mu)$. If $\|\rho\|_{L^2(\partial K, \mu)}=0$ then the statement is trivial. Suppose instead that $\|\rho\|_{L^2(\partial K, \mu)}\neq0$. Then, by \eqref{eq:fractional_extension}, we can find $\varphi \in C^{1}(K)$ such that $Tr(\varphi) = \rho$ and
    \begin{equation}\label{eq:Prop 3.1.3}
        \|\varphi\|_{H^{1}(K,\mu)}
        \leq C_1 \|\rho\|_{H^{1/2}(\partial K,\mu)} .
    \end{equation}
Consider $g \in C^{1}(K)$ such that $g|_{\partial K} =0$ and $\int_{K} g\, \di\mu \neq 0$, and define
    \[\tilde{\varphi}(x) =  \varphi(x)- \frac{\int_K \varphi\, \di\mu}{\int_{K} g\, \di\mu}\, g(x). \]
    Notice that $\tilde{\varphi}$ still satisfies $Tr(\tilde{\varphi})= \rho$ and in addition $\int_{K}\tilde{\varphi}\, \di\mu =0$.  
    Choosing the constant $\tilde{c} =  \|g\|_{H^{1}(K,\mu)}/|\int_{K}g\, \di\mu|$, we obtain the estimate
    \begin{align}\label{eq:Prop 3.1.4}
        \notag
        \|\tilde{\varphi}\|_{H^{1}(K,\mu)}
        &\leq \|\varphi\|_{H^{1}(K,\mu)}
         + \tilde{c}\,\left|\!\int_{K}\varphi\, \di \mu\right|        \notag
        \leq \|\varphi\|_{H^{1}(K,\mu)}
          + \tilde{c}\sqrt{\mu(K)}\,\|\varphi\|_{L^2(K, \mu)}
        \\
        &\leq \left(1+C\sqrt{\mu(K)}\right)\,\|\varphi\|_{H^{1}(K,\mu)}
        \leq \tilde{C}\|\rho\|_{H^{1/2}(\partial K,\mu)},
    \end{align}
    where the last inequality follows by \eqref{eq:Prop 3.1.3}. Observe also that since $g$ was arbitrary and it does not depend on $\rho$, all the constants depend only on $K$ and $\mu$. We may now choose $\psi = \tilde{\varphi}$ in \eqref{eq:Prop 3.1.2} and, by \eqref{eq:Prop 3.1.4},
    \[
    \|\rho\|^2_{L^2(\partial K, \mu)}
    \;\leq\;
    c\,
    \langle \rho,\rho \rangle_{\rm P}^{1/2}\,
    \|\rho\|_{H^{1/2}(\partial K,\mu)}. 
    \] 
    Finally, thanks to \eqref{eq:frac_boundary_embedding}, we have
    \[
    \|\rho\|^2_{L^2(\partial K, \mu)}
    \;\leq\;
    \tilde{C}\,
    \langle \rho,\rho \rangle_{\rm P}^{1/2}\,
    \|\rho\|_{H^{1}(\partial K,\mu)},
    \]
    completing the proof.
\end{proof}

We can now prove the main result of this section (Theorem \ref{thm:stability_intro} in the introduction). 
\begin{theorem}\label{thm:Colesanti_Stability}
Assume that $K$ and $\mu$ satisfy the assumptions of Theorem~\ref{thm:multiplicative_form}. If $\rho \in H^1(\partial K,\mu)$ is such that 
\begin{equation}\label{stability_statement}
\int_{\partial K} \left\langle {\rm{II}}^{-1}\nabla_{\partial K}\rho, \nabla_{\partial K} \rho \right\rangle \di \mu \leq \int_{\partial K}\mathrm{H}_{\mu}\rho^2\di \mu  -\frac{1}{\mu(K)}\left( \int_{\partial K}\rho \di \mu  \right)^2 + \varepsilon 
\end{equation}
for some $\varepsilon \in (0,1)$, then
\[\|\rho\|_{H^{1}(\partial K,\mu)} \leq C\varepsilon^{1/2}\]
for a constant $C$ depending only on $K$ and $\mu$.
\end{theorem}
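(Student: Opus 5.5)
The hypothesis \eqref{stability_statement} says precisely that $\langle \rho,\rho\rangle_{\rm P}\le \varepsilon$. The plan is to turn this into control of the full $H^1(\partial K,\mu)$ norm. We combine Proposition~\ref{prop:Boundary Sobolev form}, which controls the $L^2$ norm, with the structure of $\langle\cdot,\cdot\rangle_{\rm P}$ itself, which controls the gradient once the $L^2$ norm is small.

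\emph{Step 1 (gradient bound).} Write $\lambda=\min_{\partial K}$ of the smallest eigenvalue of ${\rm II}^{-1}$, so $\lambda\ge m_2^{-1}>0$. Dropping the nonnegative mean term in \eqref{eq:P} gives
\[ \lambda\int_{\partial K}\|\nabla_{\partial K}\rho\|^2\di\mu \le \langle\rho,\rho\rangle_{\rm P}+\int_{\partial K}{\rm H}_\mu\rho^2\di\mu \le \varepsilon + C\|\rho\|^2_{L^2(\partial K,\mu)}, \]
using the bound $|{\rm H}_\mu|\le C$ from \eqref{eq:Generalised_Mean_Estimate}. Hence
\[ \|\rho\|^2_{H^1(\partial K,\mu)}\le C\bigl(\varepsilon+\|\rho\|^2_{L^2(\partial K,\mu)}\bigr). \]

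\emph{Step 2 (interpolation).} By Proposition~\ref{prop:Boundary Sobolev form},
\[ \|\rho\|^2_{L^2(\partial K,\mu)}\le c\,\varepsilon^{1/2}\|\rho\|_{H^1(\partial K,\mu)}. \]
Set $X=\|\rho\|_{H^1(\partial K,\mu)}$. Inserting this into Step 1 gives
\[ X^2\le C\varepsilon + C\varepsilon^{1/2}X. \]
This is a quadratic inequality in $X$. Either $X\le 2C\varepsilon^{1/2}$, or $X^2\le 2C\varepsilon$. In both cases $X\le C'\varepsilon^{1/2}$. Young's inequality $C\varepsilon^{1/2}X\le \tfrac12X^2+\tfrac{C^2}{2}\varepsilon$ gives the same conclusion directly.

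\emph{Step 3 (equality case).} If equality holds in \eqref{eq:Poincare}, then $\langle\rho,\rho\rangle_{\rm P}=0$, so the hypothesis holds for every $\varepsilon\in(0,1)$. Letting $\varepsilon\to0$ gives $\rho\equiv0$.

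The only delicate point is confirming that $\langle\rho,\rho\rangle_{\rm P}\ge0$, so that taking square roots is legitimate, and that Proposition~\ref{prop:Boundary Sobolev form} applies to all of $H^1(\partial K,\mu)$. Both follow from the density and continuity statements in Lemma~\ref{lem:continuity_both_forms} together with \eqref{eq:Poincare}. The substantive work, namely using the interaction with Brascamp--Lieb through a mean-zero extension, is already contained in Proposition~\ref{prop:Boundary Sobolev form}. What remains is the bookkeeping above, and I expect no real obstacle beyond this.
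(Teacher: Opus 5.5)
Your proposal is correct and follows the paper's proof: you use the same gradient bound (drop the mean term, use ${\rm II}^{-1}\ge m_2^{-1}\Id$ and $|{\rm H}_\mu|\le C$) and combine it with Proposition~\ref{prop:Boundary Sobolev form}. The only difference is cosmetic. You solve the resulting quadratic inequality directly for $\|\rho\|_{H^1(\partial K,\mu)}$, whereas the paper solves it for $\|\rho\|_{L^2(\partial K,\mu)}$ and then reinserts that bound into the gradient estimate.
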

\begin{proof} We claim that, under the assumptions of the theorem,
\begin{equation}\label{eq:Sobolev vs Poincare}
\|\rho\|_{H^{1}(\partial K,\mu)} \leq c\|\rho\|_{L^2(\partial K, \mu)}+\tilde{c}\varepsilon^{1/2}.
\end{equation} 
Indeed, denoting by $m_{2} >0$ the maximal curvature of $\partial K$,
 \begin{equation}\label{eq:stability_1}
        m_{2}^{-1}\int_{\partial K} \|\nabla_{\partial K} \rho \|^2 \di \mu 
        \leq \int_{\partial K} \left\langle {\rm{II}}^{-1} \nabla_{\partial K } \rho, \nabla_{\partial K } \rho \right\rangle \di \mu.
    \end{equation}
On the other hand, by \eqref{eq:Generalised_Mean_Estimate},
\begin{equation}\label{eq:stability_2}
    \int_{\partial K} {\rm H}_\mu \rho^2 \di \mu \leq \int_{\partial K} |{\rm H}_\mu| \rho^2 \di \mu \leq C\|\rho\|^2_{L^2(\partial K, \mu)} \, .
\end{equation}
Dropping the (non-positive) squared mean in \eqref{stability_statement} and combining \eqref{eq:stability_1} with \eqref{eq:stability_2}, we infer
\[\int_{\partial K} \|\nabla_{\partial K} \rho \|^2 \di \mu 
        \leq c\|\rho\|^2_{L^2(\partial K, \mu)}+ \tilde{C}\varepsilon. \]
Finally, we have that
\[\|\rho\|_{H^{1}(\partial K,\mu)} = \left[ \int_{\partial K} \|\nabla_{\partial K} \rho \|^2+\rho^2\di \mu \right]^{1/2} \leq \left[c\|\rho\|^2_{L^2(\partial K, \mu)} +\tilde{c}\varepsilon \right]^{1/2} \leq 
C\|\rho\|_{L^2(\partial K, \mu)} + \tilde{C}\varepsilon^{1/2},\]
proving our claim. Combining the above with Proposition~\ref{prop:Boundary Sobolev form} yields
\[\|\rho\|^{2}_{L^2(\partial K, \mu)} \leq c_{1} \cdot \varepsilon^{1/2}\|\rho\|_{L^2(\partial K, \mu)}+c_{2} \varepsilon \]
for some constants $c_{1},c_{2},$ which depend only on $K,\mu$. This implies that $\|\rho\|_{L^2(\partial K, \mu)}$ lies between the roots of $x^2-c_{1}\varepsilon^{1/2}x-c_{2}\varepsilon=0$. Therefore, $\|\rho\|_{L^2(\partial K, \mu)}$ is less than the maximal root. I.e.,
\[\|\rho\|_{L^2(\partial K, \mu)} \leq \frac{\varepsilon^{1/2}}{2}\left(c_{1}+\sqrt{c^2_{1}+4c_{2}}\right).  \]
The proof is concluded with another application of \eqref{eq:Sobolev vs Poincare}.
\end{proof}

As an immediate corollary, we deduce the equality cases of the weighted Poincar\'{e} inequalities in the boundary of convex sets.
\begin{corollary}\label{cor:eq_case}
Suppose that the function $\rho \in C^{1}(\partial K)$ satisfies
\[\int_{\partial K} \left\langle {\rm{II}}^{-1}\nabla_{\partial K}\rho, \nabla_{\partial K} \rho \right\rangle \di \mu =\int_{\partial K}{\rm H}_\mu\rho^2\di \mu  -\frac{1}{\mu(K)}\left( \int_{\partial K}\rho \di \mu  \right)^2 , \]
where $\mu$ and $K$ satisfy the assumptions of Theorem~\ref{thm:multiplicative_form}. Then $\rho(x)=0$, for all $x \in \partial K$.
\end{corollary}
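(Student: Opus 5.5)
The corollary follows from Theorem~\ref{thm:Colesanti_Stability} by a limiting argument in $\varepsilon$.

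First I check that $\rho$ is admissible. Since $\partial K$ is compact and $\rho \in C^1(\partial K)$, both $\rho$ and $\nabla_{\partial K}\rho$ are bounded. Hence $\rho \in H^1(\partial K,\mu)$, and in fact $\rho$ lies in the dense class on which $\langle\cdot,\cdot\rangle_{\rm P}$ was originally defined, so no approximation is needed.

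Next, the assumed equality says precisely that $\langle \rho,\rho\rangle_{\rm P}=0$. Therefore hypothesis \eqref{stability_statement} holds for every $\varepsilon\in(0,1)$, because its left-hand side equals the right-hand side with $\varepsilon$ removed. Applying Theorem~\ref{thm:Colesanti_Stability} gives
\[
\|\rho\|_{H^1(\partial K,\mu)}\le C\varepsilon^{1/2} \quad \text{for all } \varepsilon\in(0,1),
\]
and $C$ depends only on $K$ and $\mu$, not on $\varepsilon$. Letting $\varepsilon\to0^+$ yields $\|\rho\|_{H^1(\partial K,\mu)}=0$, and in particular
\[
\int_{\partial K}\rho^2\,e^{-u}\,\di\haus^{n-1}=0 .
\]

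Finally, I pass from vanishing in norm to vanishing everywhere. The weight $e^{-u}$ is continuous and strictly positive on the compact set $\partial K$, so $\rho=0$ holds $\haus^{n-1}$-almost everywhere on $\partial K$. Since $\rho$ is continuous and every nonempty relatively open subset of the $C^2$ hypersurface $\partial K$ has positive $\haus^{n-1}$-measure, it follows that $\rho(x)=0$ for every $x\in\partial K$. None of these steps is a real obstacle: the only points to watch are that $C$ is independent of $\varepsilon$ and the passage from almost-everywhere to everywhere vanishing.
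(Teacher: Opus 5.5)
Your proposal is correct and is exactly the argument the paper has in mind when it calls this an immediate corollary of Theorem~\ref{thm:Colesanti_Stability}. Equality means $\langle\rho,\rho\rangle_{\rm P}=0$, so the hypothesis holds for every $\varepsilon\in(0,1)$ with an $\varepsilon$-independent constant, which forces $\|\rho\|_{H^1(\partial K,\mu)}=0$; your final upgrade from $\mu$-a.e.\ vanishing to pointwise vanishing, using continuity of $\rho$ and positivity of $e^{-u}$, is a detail the paper leaves implicit.
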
\noindent
The conclusion of Corollary \ref{cor:eq_case} is in stark contrast to the equality cases of the unweighted version established by Colesanti in \cite{Colesanti_poincare}. The equality cases (or absence thereof) of the weighted Poincar\'{e} inequality are evidence of the fact that this inequality can be improved in various ways, as already discussed in \cite{Kol_Mil_P}. The improvement of these infinitesimal forms is extremely natural (see Proposition \ref{Prop:Bilinear_vs_gradient norm} later), and lies at the core of the recent treatment of concavity properties of log-concave measures, which will be the topic of the next section. 

\section{Concavity powers of strictly log-concave measures}

The purpose of this section is to present applications of the inequality in Theorem \ref{thm:multiplicative_form} to concavity properties of functionals restricted to sets in the class $\mathcal{K}^n$. In the present section it will be convenient to normalize $\mu$ as a probability measure supported on $\mathbb{R}^n$. We adopt some terminology and observations from various authors; see, for example, \cite[Section 3]{GalynaStability} and the references therein. We begin the exposition in a general, not necessarily symmetric, setting.

\medskip

We are interested in the largest value $p(\mu)$ such that 
\begin{equation}\label{eq:pmu}
    \mu((1-t)K+tL)^{p(\mu)} \geq (1-t)\mu(K)^{p(\mu)}+t\mu(L)^{p(\mu)}
\end{equation}
for every $K,L \in \K^n$ and $t \in [0,1]$. Notice that, as $\mu$ is log-concave, \eqref{eq:pmu} is, by definition, true for the power $p=0$. Moreover, H\"older's inequality implies that if \eqref{eq:pmu} is true for some $p(\mu)>0$, then it remains true for every $p \in [0,p(\mu))$. As mentioned in the introduction, it is conjectured that the concavity exponent in \eqref{eq:pmu} is $1/n$ if one restricts the inequality to an even log-concave probability measure and origin-symmetric convex bodies. That is, \eqref{eq:dim_BM} holds. This conjecture was verified in the contexts presented in \cite{dim_gauss, rot_B, cordero2025concavity}, but a full solution is yet to be found.  

We briefly explain how estimating  $p(\mu)$ can be approached via a local to global principle, following \cite{Kol_Liv_Gardner-Zvavitch,GalynaStability}. Fix a convex set $K$ and define the concavity power of $K$ with respect to $\mu$ by
\begin{equation}\label{eq:local_concavity_power}
p(\mu,K)= \limsup_{\varepsilon \rightarrow 0^{+}} \left\{p>0 : \forall L \in \mathcal{K}^{n},\ \mu((1-\varepsilon)K+\varepsilon L)^{p} \geq (1-\varepsilon)\mu(K)^{p}+\varepsilon\mu(L)^{p}\right\}. 
\end{equation}
Then it follows that (see, e.g., \cite{GalynaStability})
\[p(\mu) = \inf_{K \in \K^n} p(\mu,K).\]
The symmetric analogues are defined similarly, with the obvious modifications. Therefore, the general strategy is to prove that for every convex set $K$ one has $p(\mu,K) \geq c(\mu)$ for some $c(\mu)$ independent of $K$, and then to deduce the global concavity estimate \eqref{eq:pmu}.

\medskip

We now explain how the quantity in \eqref{eq:local_concavity_power} is connected to spectral inequalities. Fix a smooth and strictly convex $K\in\mathcal{K}^n$, and let $L\in\mathcal{K}^n$ be smooth and strictly convex as well. Define the function $f : \sph^{n-1} \to \mathbb{R}$ by
\begin{equation}\label{eq:addmisible_pertubations}
f = h_{L}-h_{K},
\end{equation}
where $\nu_K$ denotes the outer unit normal to $\partial K$. Then
\[(1-t)K + t L = [h_{K}+tf].\]
With this notation, \eqref{eq:local_concavity_power} can be rewritten as
\[p(\mu,K)= \sup_{p \in (0,\infty)} \left\{ p : \left.\frac{d^{2}}{dt^{2}}\mu(K(f,t))^{p}\right|_{t=0} \leq 0\quad \forall f \in C^{1}(\sph^{n-1}), \text{ where } K(f,t)=[h_K+tf] \right\},\] where we have that linear combinations of differences of support functions in $C^2(\sph^{n-1})$ are dense in $C^1(\sph^{n-1})$. With the notation $\rho=f(\nu_K)$ (which is in $C
^1(\partial K)$ by the regularity of $K$), if we now write \[J(\rho) = \frac{\mu(K)\langle \rho,\rho\rangle_{\rm P}}{\left(\int_{\partial K}\rho\,\di\mu\right)^2},\] where we recall that the bilinear form appearing on the right-hand side is the one defined in \eqref{eq:P}, explicit computations (compare again \cite{GalynaStability}) show that 
\begin{equation}\label{eq:def_concavity_power}
p(\mu,K)= \inf_{\rho \in C^{1}(\partial K)} J(\rho).
\end{equation}

In the case where the convex sets $K$ and $L$ are origin symmetric, the admissible perturbations $f$ in \eqref{eq:addmisible_pertubations} must be even. Consequently, the infimum in \eqref{eq:def_concavity_power} is taken over all even functions in $C^{1}(\partial K)$. Using the $L_2$ method, the functional $J(\rho)$ can be estimated from below using only integrals over $K$; see, for example, Proposition~3.16 in \cite{GalynaStability}. This has been the prevailing approach in all prior results in the literature.

As mentioned in the introduction, we shall propose a different approach, working directly with the functional $J(\rho)$. In particular, we shall prove that the infimum in \eqref{eq:def_concavity_power} is in fact attained, and compute the associated Euler--Lagrange equation. By a standard density argument,
\begin{equation}\label{eq:concavity_power_Sobolev}
p(\mu,K)= \inf_{\rho \in H^{1}(\partial K,\mu)} J(\rho),
\end{equation}
so that we may work in the Sobolev space $H^{1}(\partial K,\mu)$, which is better suited to our purposes since it is complete. The idea of deducing concavity principles via an associated partial differential equation is not new; see, for example, \cite{Colesanti_eigenvalue_concavity,Borell_Torsional_rigidity} for analogous approaches concerning the first Dirichlet eigenvalue of the Laplacian and torsional rigidity, as well as \cite[Section 4]{Salani_elliptic_PDE} for a broader discussion. However, to the best of our knowledge, this approach is new in the context of concavity powers of convex sets, partly because the corresponding Euler--Lagrange equation had not previously been available. In this sense, our method yields an explicit characterization of the concavity power of a given set $K$ through a precise perturbation, incorporating the structure of the underlying measure $\mu$.

\medskip

\paragraph{Existence of a minimizer and the Lax--Milgram theorem.} Now that we have concluded the necessary preparations, we shall prove the existence of a minimizer in \eqref{eq:concavity_power_Sobolev}. At the core of our approach lies the Lax--Milgram theorem. In order to apply it, we need to show that the symmetric bilinear form $\langle \cdot, \cdot \rangle_{\rm P}$ is continuous and coercive with respect to the norm $\|\cdot\|_{H^{1}(\partial K,\mu)}$. Since continuity was established in Lemma~\ref{lem:continuity_both_forms}, it remains to prove coercivity. A fundamental step is the following strengthened form of the Poincar\'e inequality \eqref{eq:Poincare}.
\begin{proposition}\label{Prop:Bilinear_vs_gradient norm}
Let $K,\mu$ satisfy the assumptions of Theorem~\ref{thm:multiplicative_form}. Then there exists a constant $\lambda_{1}=\lambda_{1}(K,\mu)>1$ such that for every $\rho \in H^{1}(\partial K,\mu)$,
\begin{equation}\label{eq:Boundary_Improved_Poincare}
   \int_{\partial K}{\rm H}_\mu\rho^2\,\di\mu -\frac{1}{\mu(K)}\left( \int_{\partial K}\rho\di\mu \right)^2 \leq  \frac{1}{\lambda_1}\int_{\partial K} \left\langle {\rm II}^{-1}\nabla_{\partial K}\rho, \nabla_{\partial K} \rho \right\rangle \di\mu.
\end{equation}
\end{proposition}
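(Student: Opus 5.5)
The plan is to turn the interpolation inequality of Proposition~\ref{prop:Boundary Sobolev form} into a uniform gap between $\langle\rho,\rho\rangle_{\rm P}$ and the Dirichlet-type energy. I will use the following notation:
\[
D(\rho)=\int_{\partial K}\langle {\rm II}^{-1}\nabla_{\partial K}\rho,\nabla_{\partial K}\rho\rangle\di\mu,
\qquad
N(\rho)=\int_{\partial K}{\rm H}_\mu\rho^2\di\mu-\frac{1}{\mu(K)}\Big(\int_{\partial K}\rho\di\mu\Big)^2 .
\]
With this notation $\langle\rho,\rho\rangle_{\rm P}=D(\rho)-N(\rho)$. The claim \eqref{eq:Boundary_Improved_Poincare} is therefore equivalent to $\langle\rho,\rho\rangle_{\rm P}\ge \eta_0 D(\rho)$ for some $\eta_0\in(0,1)$ depending only on $K$ and $\mu$, since then one may take $\lambda_1=(1-\eta_0)^{-1}>1$. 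Both sides are $2$-homogeneous in $\rho$.

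First I dispose of the degenerate case $D(\rho)=0$. Since ${\rm II}^{-1}$ is positive definite, this forces $\nabla_{\partial K}\rho=0$, so $\rho$ is constant on the connected manifold $\partial K$. Theorem~\ref{thm:multiplicative_form}, with $\varphi\equiv0$ and extended to $H^1$ by Lemma~\ref{lem:continuity_both_forms}, gives $\langle\rho,\rho\rangle_{\rm P}\ge0$, i.e. $N(\rho)\le D(\rho)=0$. Hence \eqref{eq:Boundary_Improved_Poincare} holds trivially in this case.

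Otherwise, I normalize so that $D(\rho)=1$ and write $\eta=\langle\rho,\rho\rangle_{\rm P}\ge0$ and $x=\|\rho\|_{L^2(\partial K,\mu)}$. It is enough to treat $\eta<1/2$. I collect three estimates, all with constants depending only on $K$ and $\mu$.
\begin{enumerate}
\item By \eqref{eq:Generalised_Mean_Estimate}, $1-\eta=N(\rho)\le\int_{\partial K}|{\rm H}_\mu|\rho^2\di\mu\le C x^2$, so $x^2\ge \frac{1}{2C}$.
\item As in \eqref{eq:stability_1}, $\|\nabla_{\partial K}\rho\|^2_{L^2}\le m_2 D(\rho)=m_2$, so $\|\rho\|^2_{H^1(\partial K,\mu)}\le m_2+x^2$.
\item Proposition~\ref{prop:Boundary Sobolev form} gives $x^2\le c\,\eta^{1/2}(m_2+x^2)^{1/2}$.
\end{enumerate}

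Combining the third estimate with the first yields a lower bound on $\eta$. Squaring, $x^4\le c^2\eta(m_2+x^2)$, hence $\eta\ge x^4/\big(c^2(m_2+x^2)\big)$. The map $s\mapsto s^2/(m_2+s)$ is increasing in $s=x^2$, and $x^2\ge \frac{1}{2C}$, so
\[
\eta\ \ge\ \eta_1:=\frac{(2C)^{-2}}{c^2\,(m_2+(2C)^{-1})}>0 .
\]
Set $\eta_0=\min\{\eta_1,1/2\}$. In every case $\langle\rho,\rho\rangle_{\rm P}\ge\eta_0D(\rho)$, which is the desired inequality.

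The only delicate point is that every constant must be uniform. This holds because Proposition~\ref{prop:Boundary Sobolev form}, the curvature bounds $m_1,m_2$, and \eqref{eq:Generalised_Mean_Estimate} involve only $K$ and $\mu$. It also uses that Proposition~\ref{prop:Boundary Sobolev form} is valid on all of $H^1(\partial K,\mu)$, so no density argument is needed beyond the one already made there.
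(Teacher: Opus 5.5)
Your argument is correct and rests on the same ingredients as the paper's proof: the interpolation inequality of Proposition~\ref{prop:Boundary Sobolev form}, the bound \eqref{eq:Generalised_Mean_Estimate} on $|{\rm H}_\mu|$, and the curvature bound $m_2$. The paper argues by contradiction through the stability Theorem~\ref{thm:Colesanti_Stability}, which is itself built from exactly these estimates, and normalizes by $\|\rho\|_{L^2(\partial K,\mu)}=1$; you instead normalize $D(\rho)=1$ and apply the same estimates directly, which gives an explicit $\lambda_1=(1-\eta_0)^{-1}$.
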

\begin{proof}
Observe first that the inequality holds trivially for any $\lambda_1$ whenever the left-hand side is non-positive, since by the strict convexity of $K$ one has that ${\rm II}^{-1}$ is positive definite. We may therefore restrict our attention to functions $\rho\in H^{1}(\partial K,\mu)$ for which the left-hand side of \eqref{eq:Boundary_Improved_Poincare} is positive. Denote this class by $\mathcal{P}$ and define
\[\lambda_{1}= \inf_{\rho \in \mathcal{P}} \frac{\int_{\partial K} \left\langle {\rm II}^{-1}\nabla_{\partial K}\rho, \nabla_{\partial K} \rho \right\rangle \di\mu}{\int_{\partial K}{\rm H}_\mu\rho^2\,\di\mu-\frac{1}{\mu(K)}\left( \int_{\partial K}\rho\,\di\mu \right)^2 }.\]
We claim that $\lambda_{1}>1$. Since $\langle \cdot,\cdot\rangle_{\rm P}\geq 0$, it follows immediately that $\lambda_{1}\geq 1$. Assume, by contradiction, that $\lambda_{1}=1$. Then for every $\varepsilon>0$ there would exist a function $\rho_{\varepsilon}\in\mathcal{P}$ such that
\[ \int_{\partial K} \left\langle {\rm II}^{-1}\nabla_{\partial K}\rho_{\varepsilon}, \nabla_{\partial K} \rho_{\varepsilon}\right\rangle \,\di\mu\leq (1+\varepsilon)\left[\int_{\partial K}{\rm H}_\mu\rho_{\varepsilon}^2\,\di\mu-\frac{1}{\mu(K)}\left( \int_{\partial K}\rho_{\varepsilon}\,\di\mu \right)^2\right]. \]
Thus,
\begin{align*}
    \notag \langle \rho_{\varepsilon},\rho_{\varepsilon} \rangle_{\rm P}
    &\leq \varepsilon\left[\int_{\partial K}{\rm H}_\mu\rho_{\varepsilon}^2\,\di\mu-\frac{1}{\mu(K)}\left( \int_{\partial K}\rho_{\varepsilon}\,\di\mu \right)^2
    \right] \\
    &\leq \varepsilon \int_{\partial K}|{\rm H}_\mu|\rho_{\varepsilon}^2\,\di\mu \leq c\,\varepsilon\,\|\rho_{\varepsilon}\|^2_{L^{2}(\partial K,\mu)},
\end{align*}
where in the second inequality we dropped the non-positive term and used \eqref{eq:Generalised_Mean_Estimate}. Rearranging the terms in the previous inequality we infer
\begin{equation}\label{eq:Poincare_estimate}
     \langle \rho_{\varepsilon},\rho_{\varepsilon} \rangle_{\rm P} \leq c\,\varepsilon\,\|\rho_{\varepsilon}\|^2_{L^{2}(\partial K,\mu)}
\end{equation}
for every $\varepsilon >0$. Since \eqref{eq:Poincare_estimate} is homogeneous, we may assume, without loss of generality, that $\|\rho_{\varepsilon}\|_{L^{2}(\partial K,\mu)}=1$, which yields
\[\langle \rho_{\varepsilon},\rho_{\varepsilon} \rangle_{\rm P} \leq c\,\varepsilon.\]
By Theorem~\ref{thm:Colesanti_Stability}, this implies
\[1=\|\rho_{\varepsilon}\|_{L^{2}(\partial K,\mu)}\leq \|\rho_{\varepsilon}\|_{H^{1}(\partial K,\mu)}\leq C\sqrt{c\varepsilon}= \tilde C\,\varepsilon^{1/2},\]
which, since $\varepsilon>0$ is arbitrary, leads to a contradiction. Therefore, $\lambda_{1}>1$, concluding the proof.
\end{proof}\noindent
We can now show that $\langle \cdot, \cdot\rangle_{\rm P}$ is coercive.
\begin{lemma}\label{lem:poincare_coercivity}
Let $K,\mu$ satisfy the assumptions of Theorem~\ref{thm:multiplicative_form}. Then there exists a constant $C=C(K,\mu)>0$ such that for every $\rho\in H^{1}(\partial K,\mu)$,
\begin{equation}\label{eq:control_of_H1_norm}
    \langle \rho,\rho \rangle_{\rm P}\geq C \|\rho\|^2_{H^{1}(\partial K,\mu)}.
\end{equation}
\end{lemma}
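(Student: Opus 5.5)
The plan is to combine Proposition~\ref{Prop:Bilinear_vs_gradient norm}, which gives a spectral gap $\lambda_1>1$, with Theorem~\ref{thm:Colesanti_Stability}, which gives control of the $L^2$ part. Write $D(\rho)=\int_{\partial K}\langle {\rm II}^{-1}\nabla_{\partial K}\rho,\nabla_{\partial K}\rho\rangle\di\mu$ and $E(\rho)=\int_{\partial K}{\rm H}_\mu\rho^2\di\mu-\frac{1}{\mu(K)}\left(\int_{\partial K}\rho\di\mu\right)^2$, so that $\langle\rho,\rho\rangle_{\rm P}=D(\rho)-E(\rho)$.

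\textbf{Step 1: gradient control.} By Proposition~\ref{Prop:Bilinear_vs_gradient norm}, $E(\rho)\le \lambda_1^{-1}D(\rho)$. Hence
\[
\langle\rho,\rho\rangle_{\rm P}\ge \left(1-\lambda_1^{-1}\right)D(\rho)\ge \left(1-\lambda_1^{-1}\right)m_2^{-1}\int_{\partial K}\|\nabla_{\partial K}\rho\|^2\di\mu,
\]
where ${\rm II}\le m_2\Id$, i.e.\ ${\rm II}^{-1}\ge m_2^{-1}\Id$. When $E(\rho)\le 0$ this bound still holds, since then $\langle\rho,\rho\rangle_{\rm P}\ge D(\rho)$. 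So the gradient part of the norm is controlled with a constant depending only on $K$ and $\mu$.

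\textbf{Step 2: the $L^2$ part.} The gradient alone does not control constants, so the $L^2$ term must be handled separately. The cleanest route is Theorem~\ref{thm:Colesanti_Stability} together with homogeneity. Suppose \eqref{eq:control_of_H1_norm} fails for every $C>0$. Then there are $\rho_k$ with $\|\rho_k\|_{H^1(\partial K,\mu)}=1$ and $\langle\rho_k,\rho_k\rangle_{\rm P}\le 1/k$. For large $k$ this is exactly hypothesis \eqref{stability_statement} with $\varepsilon=2/k\in(0,1)$, so Theorem~\ref{thm:Colesanti_Stability} gives $1=\|\rho_k\|_{H^1}\le C(2/k)^{1/2}\to 0$, a contradiction. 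A direct quantitative version works equally well. By scaling, the hypothesis of Theorem~\ref{thm:Colesanti_Stability} with $\varepsilon$ replaced by $\langle\rho,\rho\rangle_{\rm P}$, after normalizing $\|\rho\|_{H^1}$ so that this quantity lies in $(0,1)$, yields $\|\rho\|_{H^1}^2\le C^2\langle\rho,\rho\rangle_{\rm P}$.

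\textbf{Main obstacle.} The delicate point is the scaling in Step~2. Theorem~\ref{thm:Colesanti_Stability} is stated for $\varepsilon\in(0,1)$ with a constant independent of $\rho$, and since both sides of \eqref{eq:control_of_H1_norm} are $2$-homogeneous, I would normalize $\|\rho\|_{H^1}=1$. Two cases must be checked:
\begin{itemize}
\item If $\langle\rho,\rho\rangle_{\rm P}\ge 1/2$, the bound holds trivially with constant $1/2$.
\item Otherwise, take any $\varepsilon\in(\langle\rho,\rho\rangle_{\rm P},1)$. The stability theorem gives $1\le C\varepsilon^{1/2}$, so letting $\varepsilon\downarrow\langle\rho,\rho\rangle_{\rm P}$ yields $\langle\rho,\rho\rangle_{\rm P}\ge C^{-2}$. (If $\langle\rho,\rho\rangle_{\rm P}=0$, this would force $1\le C\varepsilon^{1/2}$ for all small $\varepsilon$, which is impossible, so this case does not occur.)
\end{itemize}
Taking $\min(1/2,C^{-2})$ gives the coercivity constant. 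Step~1 is thus only a refinement and not strictly needed, though it makes the gradient dependence explicit.
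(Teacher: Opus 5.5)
Your argument is correct, but it takes a different route from the paper.

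\textbf{The paper's route.} The paper proves $\langle\rho,\rho\rangle_{\rm P}\ge c\max\{\|\rho\|^2_{L^2(\partial K,\mu)},\|\nabla_{\partial K}\rho\|^2_{L^2(\partial K,\mu)}\}$ by splitting into two cases.
\begin{itemize}
\item When the $L^2$ norm dominates, it applies the interpolation inequality of Proposition~\ref{prop:Boundary Sobolev form} directly: $\|\rho\|^4_{L^2}\le c_1\langle\rho,\rho\rangle_{\rm P}\|\rho\|^2_{H^1}\le 2c_1\langle\rho,\rho\rangle_{\rm P}\|\rho\|^2_{L^2}$.
\item When the gradient dominates, it uses the spectral gap $\lambda_1>1$ of Proposition~\ref{Prop:Bilinear_vs_gradient norm}, exactly as in your Step~1.
\end{itemize}

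\textbf{Your route.} You observe that $\langle\rho,\rho\rangle_{\rm P}$ and $\|\rho\|^2_{H^1(\partial K,\mu)}$ are both $2$-homogeneous. Hence Theorem~\ref{thm:Colesanti_Stability}, applied to a rescaled $\rho$ with $\varepsilon=\langle\rho,\rho\rangle_{\rm P}$ (or via your contradiction argument), is already equivalent to coercivity with constant $C_{\mathrm{stab}}^{-2}$. No case split and no spectral gap are needed. Your normalization also handles the degenerate case $\langle\rho,\rho\rangle_{\rm P}=0$ correctly.

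\textbf{What each buys.} Your argument is shorter and shows that Theorem~\ref{thm:Colesanti_Stability} and the lemma are the same statement up to constants. Its constant is as explicit as the one in the stability theorem. By contrast, the paper's Case~2 relies on $\lambda_1>1$, which was obtained non-quantitatively, by contradiction from Theorem~\ref{thm:Colesanti_Stability} itself. In fact, coercivity together with ${\rm II}^{-1}\le m_1^{-1}\Id$ gives $E(\rho)\le(1-Cm_1)D(\rho)$. That is Proposition~\ref{Prop:Bilinear_vs_gradient norm} with an explicit gap, so the logical order of the two results could be reversed. The paper's route, for its part, keeps the two mechanisms visibly separate: interpolation controls the $L^2$ mass and the spectral gap controls the gradient.
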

\begin{proof}
We claim that there exists $c>0$ such that
\[\langle \rho,\rho \rangle_{\rm P}\geq c\,\max\left\{\|\rho\|^2_{L^2(\partial K, \mu)},\|\nabla_{\partial K}\rho\|^2_{L^2(\partial K, \mu)}\right\}\quad\text{for all} \,\,\,\rho\in H^{1}(\partial K,\mu).\]
We fix $\rho\in H^{1}(\partial K,\mu)$ and consider two cases.
\smallskip

\noindent
\emph{Case 1.}
Suppose that
\[\max\left\{\|\rho\|^2_{L^2(\partial K, \mu)},\|\nabla_{\partial K}\rho\|^2_{L^2(\partial K, \mu)}\right\}=\|\rho\|^2_{L^2(\partial K, \mu)}.
\]
Using Proposition~\ref{prop:Boundary Sobolev form}, we obtain
\[\|\rho\|^4_{L^2(\partial K, \mu)}\leq c_1 \langle \rho,\rho \rangle_{\rm P}\,\|\rho\|^2_{H^{1}(\partial K,\mu)}\leq 2c_1 \langle \rho,\rho \rangle_{\rm P}\,\|\rho\|^2_{L^2(\partial K, \mu)}.\]
Hence,
\begin{equation}\label{eq:coercivity_case_1}
\|\rho\|^2_{L^2(\partial K, \mu)}
\leq 
2c_1\,\langle \rho,\rho \rangle_{\rm P}.
\end{equation}

\smallskip
\noindent
\emph{Case 2.}
Suppose instead that
\[\max\left\{\|\rho\|^2_{L^2(\partial K, \mu)},\|\nabla_{\partial K}\rho\|^2_{L^2(\partial K, \mu)}\right\}=\|\nabla_{\partial K}\rho\|^2_{L^2(\partial K, \mu)}.\]
Since $\lambda_{1}>1$, by Proposition~\ref{Prop:Bilinear_vs_gradient norm} we can write
\begin{align}
\langle \rho,\rho \rangle_{\rm P}
&=\left(1-\frac{1}{\lambda_1}\right)\int_{\partial K} \left\langle {\rm II}^{-1}\nabla_{\partial K}\rho, \nabla_{\partial K} \rho \right\rangle \di \mu+ \frac{1}{\lambda_1}\int_{\partial K} \left\langle {\rm II}^{-1}\nabla_{\partial K}\rho, \nabla_{\partial K} \rho \right\rangle \di\mu  \notag \\
\notag &- \int_{\partial K}{\rm H}_\mu\rho^2\,\di\mu +\frac{1}{\mu(K)}\left( \int_{\partial K}\rho\di\mu \right)^2 \\
&\geq \left(1-\frac{1}{\lambda_1}\right)\int_{\partial K} \left\langle {\rm II}^{-1}\nabla_{\partial K}\rho,\nabla_{\partial K} \rho \right\rangle \di \mu
\geq c_2 \|\nabla_{\partial K}\rho\|^2_{L^2(\partial K, \mu)},
\end{align}
where \eqref{eq:min_max_curvature} was also used in the last step. Thus,
\begin{equation}\label{eq:coercivity_case_2}
\langle \rho,\rho \rangle_{\rm P}\geq c_2 \max\left\{\|\rho\|^2_{L^2(\partial K, \mu)},\|\nabla_{\partial K}\rho\|^2_{L^2(\partial K, \mu)}\right\}.
\end{equation}
Combining both cases yields the claim. The conclusion \eqref{eq:control_of_H1_norm} follows from the trivial inequality
\[\|\rho\|^2_{H^{1}(\partial K,\mu)}\leq 2\max\left\{\|\rho\|^2_{L^2(\partial K, \mu)},\|\nabla_{\partial K}\rho\|^2_{L^2(\partial K, \mu)}\right\}.\]
\end{proof}

\medskip

The main result of this section (Theorem \ref{thm:minimal_power} in the introduction) reads as follows. We recall that for a vector field $X \in C^1(\partial K,\R^n)$, $\nabla_{\partial K} \cdot X$ is the tangential divergence of $X$.
\begin{theorem}\label{thm:Section_4_main1}
Let $K$ and $\mu$ satisfy the assumptions of Theorem~\ref{thm:multiplicative_form}. Then there exists a unique function $\overline{\rho} \in H^{1}(\partial K,\mu)$ which is the weak solution of
\begin{equation}\label{eq:PDE_divergence_form}
-\nabla_{\partial K} \cdot ({\rm II}^{-1}\nabla_{\partial K}\rho)+\langle \nabla_{\partial K}u,{\rm II}^{-1}\nabla_{\partial K}\rho \rangle -{\rm H}_\mu\,\rho+\frac{1}{\mu(K)}\int_{\partial K}\rho\,\di\mu=1\qquad\text{on }\partial K.
\end{equation}
Furthermore, the concavity power of $K$ with respect to $\mu$ is given by
\begin{equation}\label{eq:new_concavity expression}
p(\mu,K) = \frac{\mu(K)}{\int_{\partial K} \overline{\rho}\,\di\mu}.
\end{equation}
\end{theorem}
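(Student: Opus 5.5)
The plan is to apply the Lax--Milgram theorem on the Hilbert space $H^{1}(\partial K,\mu)$ to the symmetric bilinear form $\langle\cdot,\cdot\rangle_{\rm P}$ and the linear functional $\ell(\eta)=\int_{\partial K}\eta\,\di\mu$. Continuity of $\langle\cdot,\cdot\rangle_{\rm P}$ is contained in Lemma~\ref{lem:continuity_both_forms}. Strictly, \eqref{eq:continuity_poincare} only bounds the diagonal, but the same term-by-term estimates, combined with Cauchy--Schwarz, give $|\langle\rho,\eta\rangle_{\rm P}|\le c\|\rho\|_{H^1}\|\eta\|_{H^1}$. Coercivity is Lemma~\ref{lem:poincare_coercivity}. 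The functional $\ell$ is bounded, since $|\ell(\eta)|\le \mu(\partial K)^{1/2}\|\eta\|_{L^2(\partial K,\mu)}$. Lax--Milgram then yields a unique $\rhobar\in H^1(\partial K,\mu)$ with
\[
\langle \rhobar,\eta\rangle_{\rm P}=\int_{\partial K}\eta\,\di\mu\qquad\text{for all }\eta\in H^{1}(\partial K,\mu).
\]

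Next I must check that this identity is exactly the weak formulation of \eqref{eq:PDE_divergence_form}. To do so, multiply the left-hand side of \eqref{eq:PDE_divergence_form} by $\eta e^{-u}$ and integrate over the closed manifold $\partial K$. Integrating the divergence term by parts gives
\[
-\int_{\partial K}\nabla_{\partial K}\cdot({\rm II}^{-1}\nabla_{\partial K}\rho)\,\eta e^{-u}\di\haus^{n-1}=\int_{\partial K}\langle {\rm II}^{-1}\nabla_{\partial K}\rho,\nabla_{\partial K}\eta\rangle\di\mu-\int_{\partial K}\langle{\rm II}^{-1}\nabla_{\partial K}\rho,\nabla_{\partial K}u\rangle\eta\,\di\mu .
\]
The second term on the right cancels the drift term $\langle\nabla_{\partial K}u,{\rm II}^{-1}\nabla_{\partial K}\rho\rangle$. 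The remaining zeroth-order and nonlocal terms reproduce the last two summands of \eqref{eq:P}, and the right-hand side $1$ produces $\ell(\eta)$. Hence weak solutions are precisely the Lax--Milgram solutions, and uniqueness carries over. This also shows $\ell(\rhobar)=\langle\rhobar,\rhobar\rangle_{\rm P}>0$ unless $\rhobar=0$, which is impossible since $\ell\neq0$. So the denominator in \eqref{eq:new_concavity expression} is positive.

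For the formula for $p(\mu,K)$, I use \eqref{eq:concavity_power_Sobolev}. The functional $J$ is $0$-homogeneous, so
\[
p(\mu,K)=\mu(K)\inf\{\langle\rho,\rho\rangle_{\rm P}:\ \ell(\rho)=1\}.
\]
Because $\langle\cdot,\cdot\rangle_{\rm P}$ is an inner product equivalent to the $H^1$ inner product, this is the minimal-norm problem on an affine hyperplane, and it is solved by the Riesz representative. Concretely, $\rhobar$ represents $\ell$ in $\langle\cdot,\cdot\rangle_{\rm P}$, so for any $\rho$ Cauchy--Schwarz gives
\[
\ell(\rho)^2=\langle\rhobar,\rho\rangle_{\rm P}^2\le\langle\rhobar,\rhobar\rangle_{\rm P}\langle\rho,\rho\rangle_{\rm P}=\ell(\rhobar)\langle\rho,\rho\rangle_{\rm P},
\]
with equality at $\rho=\rhobar$. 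Therefore $\inf J=\mu(K)\ell(\rhobar)/\ell(\rhobar)^2=\mu(K)/\int_{\partial K}\rhobar\,\di\mu$, which is \eqref{eq:new_concavity expression}.

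The main obstacle is the justification of \eqref{eq:concavity_power_Sobolev}, namely that passing from $C^1$ to $H^1$ perturbations does not change the infimum. I would handle it by density of $C^1(\partial K)$ in $H^1(\partial K,\mu)$ together with continuity of $\langle\cdot,\cdot\rangle_{\rm P}$ and $\ell$. One must avoid $\ell(\rho)=0$, but for $\rho$ with $\ell(\rho)\neq0$ nearby smooth functions also have $\ell\neq0$, so $J$ is continuous there. The weak-form integration by parts is routine given the $C^2$ regularity of $\partial K$ and $u$.
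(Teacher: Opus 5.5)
Your proposal is correct and follows the paper's proof essentially step by step. The paper applies Lax--Milgram to $\langle\cdot,\cdot\rangle_{\rm P}$ with the functional $\eta\mapsto\int_{\partial K}\eta\,\di\mu$, identifies the weak formulation by integration by parts, and combines Cauchy--Schwarz in $\langle\cdot,\cdot\rangle_{\rm P}$ (with equality at $\rhobar$) with \eqref{eq:concavity_power_Sobolev}, exactly as you do. Your extra remarks on off-diagonal continuity, positivity of $\int_{\partial K}\rhobar\,\di\mu$, and the density argument avoiding $\ell(\rho)=0$ fill in details the paper leaves implicit.
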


\begin{proof}
Consider the bounded linear functional $l : H^{1}(\partial K,\mu) \to \mathbb{R}$ defined by
\[l(\rho) = \int_{\partial K} \rho\,\di\mu.\]
Since the bilinear form $\langle \cdot,\cdot \rangle_{\rm P}$ is continuous (Lemma~\ref{lem:continuity_both_forms}) and coercive (Lemma~\ref{lem:poincare_coercivity}), the Lax--Milgram theorem guarantees the existence of a unique function $\overline{\rho} \in H^{1}(\partial K,\mu)$ such that
\begin{equation}\label{eq:PDE_weak}
\langle \overline{\rho}, \rho \rangle_{\rm P}= \int_{\partial K}\rho\,\di\mu,\qquad \text{for all} \,\,\, \rho \in H^{1}(\partial K,\mu).
\end{equation}

For every $\varphi \in C^{1}(\partial K)$ and every sufficiently smooth tangent vector field $X$, an integration by parts yields
\[\int_{\partial K} \langle \nabla_{\partial K}\varphi , X \rangle \,\di\mu=-\int_{\partial K} \varphi\left(\nabla_{\partial K} \cdot X-\langle \nabla_{\partial K}u,X \rangle\right)\di\mu.\]
Thus, the weak formulation \eqref{eq:PDE_weak} is equivalent to \eqref{eq:PDE_divergence_form}, which proves the existence and uniqueness of a weak solution $\overline{\rho}$. For the second part, applying the Cauchy--Schwarz inequality with respect to the bilinear form $\langle \cdot,\cdot \rangle_{\rm P}$ yields, for all $\rho \in H^{1}(\partial K,\mu)$,
\[l(\rho)^2=\langle \overline{\rho}, \rho \rangle_{\rm P}^2\leq\langle \overline{\rho}, \overline{\rho} \rangle_{\rm P}\langle \rho, \rho \rangle_{\rm P},\]
and a rearrangement of the terms gives
\begin{equation}\label{eq:Lax_Cauchy}
\frac{1}{\langle \overline{\rho}, \overline{\rho} \rangle_{\rm P}}
\leq
\frac{\langle \rho, \rho \rangle_{\rm P}}{\left(\int_{\partial K}\rho\,\di\mu\right)^2},
\qquad
\text{for all} \,\,\, \rho \in H^{1}(\partial K,\mu).
\end{equation}
Since equality holds for $\rho=\overline{\rho}$, we conclude that
\[\inf_{\rho \in C^{1}(\partial K)}\frac{\langle \rho,\rho \rangle_{\rm P}}{\left(\int_{\partial K}\rho\,\di\mu\right)^2}=\inf_{\rho \in H^{1}(\partial K,\mu)}\frac{\langle \rho,\rho \rangle_{\rm P}}{\left(\int_{\partial K}\rho\,\di\mu\right)^2}=\frac{1}{\langle \overline{\rho}, \overline{\rho} \rangle_{\rm P}}.\]
Finally, choosing $\rho=\overline{\rho}$ in \eqref{eq:PDE_weak} yields
\[\langle \overline{\rho}, \overline{\rho} \rangle_{\rm P}=\int_{\partial K}\overline{\rho}\,\di\mu,\]
which, together with \eqref{eq:concavity_power_Sobolev}, completes the proof.
\end{proof}

\medskip

If we assume symmetry with respect to the origin for $K$ and that $\mu$ is even, we obtain the following corollary of Theorem \ref{thm:Section_4_main1}. These assumptions are precisely those of the dimensional Brunn--Minkowski conjecture \eqref{eq:dim_BM}, which is known to be false for non-symmetric sets and non-even measures. Compare, e.g., \cite{nayar2013note}.
\begin{corollary}\label{cor:even}
Let $K,\mu$ satisfy the assumptions of Theorem~\ref{thm:multiplicative_form}. Suppose, in addition, that $K$ is origin-symmetric and that the measure $\mu$ is even. Then the function $\overline{\rho}$ solving \eqref{eq:PDE_weak} is even.
\end{corollary}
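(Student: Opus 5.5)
The plan is to use uniqueness in the Lax--Milgram theorem together with the invariance of the bilinear form $\langle\cdot,\cdot\rangle_{\rm P}$ and of the functional $l(\rho)=\int_{\partial K}\rho\,\di\mu$ under the reflection $x\mapsto -x$. For $\rho\in H^1(\partial K,\mu)$, define $\rho^-(x)=\rho(-x)$. This makes sense on $\partial K$ because $K=-K$ implies $\partial K=-\partial K$. The reflection preserves $H^1(\partial K,\mu)$ isometrically: $\mu$ is even, so $u(-x)=u(x)$ and the weighted Hausdorff measure on $\partial K$ is invariant, and the tangential gradient transforms as $\nabla_{\partial K}\rho^-(x)=-\nabla_{\partial K}\rho(-x)$.

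The first step is to check that every term in \eqref{eq:P} is invariant under the reflection, that is, $\langle\rho_0^-,\rho_1^-\rangle_{\rm P}=\langle\rho_0,\rho_1\rangle_{\rm P}$.
\begin{itemize}
\item Origin symmetry gives $\nu_K(-x)=-\nu_K(x)$. The second fundamental form at $-x$, acting on the tangent space $T_{-x}\partial K=T_x\partial K$, coincides with ${\rm II}(x)$, since ${\rm II}$ is the differential of the Gauss map and both the map $x\mapsto -x$ and the normal change sign. Hence $\tra({\rm II})$ is even and ${\rm II}^{-1}(-x)={\rm II}^{-1}(x)$.
\item Evenness of $u$ gives $\nabla u(-x)=-\nabla u(x)$. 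Therefore $\langle\nabla u,\nu_K\rangle$ is even, and so is ${\rm H}_\mu$.
\item In the gradient term the two sign changes cancel: $\langle {\rm II}^{-1}\nabla_{\partial K}\rho_0^-,\nabla_{\partial K}\rho_1^-\rangle(x)$ equals the corresponding expression for $\rho_0,\rho_1$ evaluated at $-x$.
\item A change of variables $x\mapsto -x$, which preserves $\mu$ on $\partial K$, then gives invariance of each integral. Invariance of $l$, namely $l(\rho^-)=l(\rho)$, follows in the same way.
\end{itemize}

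Next, for every $\rho\in H^1(\partial K,\mu)$,
\[
\langle\overline{\rho}^-,\rho\rangle_{\rm P}=\langle\overline{\rho},\rho^-\rangle_{\rm P}=l(\rho^-)=l(\rho),
\]
where the first equality applies the invariance with $\rho$ replaced by $\rho^-$ and uses $(\rho^-)^-=\rho$. So $\overline{\rho}^-$ also solves \eqref{eq:PDE_weak}. The uniqueness in Theorem~\ref{thm:Section_4_main1}, which comes from the coercivity in Lemma~\ref{lem:poincare_coercivity}, forces $\overline{\rho}^-=\overline{\rho}$, so $\overline{\rho}$ is even.

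The only delicate point is the transformation rule for ${\rm II}$ under the reflection, and in particular confirming that no sign is left over in the quadratic gradient term. I would verify this directly: differentiate the identity $\nu_K(-x)=-\nu_K(x)$ along a tangent vector $v$. Since the chain rule contributes $-v$ and the identity contributes an overall minus sign, this gives $d\nu_K|_{-x}(v)=d\nu_K|_x(v)$, so the shape operator is even. Density of smooth functions in $H^1(\partial K,\mu)$ justifies carrying out these computations on $C^1$ functions and extending by continuity.
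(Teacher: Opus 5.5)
Your proof is correct and is essentially the paper's argument: both rest on the reflection invariance of $\langle\cdot,\cdot\rangle_{\rm P}$ and of $\int_{\partial K}\rho\,\di\mu$. The paper phrases this as orthogonality of even and odd parts and then kills $\overline{\rho}_{\mathrm{odd}}$ via the equality case of the Poincar\'e inequality, while you invoke Lax--Milgram uniqueness, which applies the same positive-definiteness to $\overline{\rho}-\overline{\rho}^-=2\overline{\rho}_{\mathrm{odd}}$; your explicit check that ${\rm II}(-x)={\rm II}(x)$ and that ${\rm H}_\mu$ is even supplies the invariance the paper only asserts.
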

\begin{proof}
Since $\mu$ is even and $K$ is symmetric, one has
\[\langle \rho, \sigma \rangle_{\rm P} = 0\]
whenever $\rho$ is even and $\sigma$ is odd. Decompose
\[\overline{\rho} = \overline{\rho}_{\mathrm{even}} + \overline{\rho}_{\mathrm{odd}}.\]
Choosing $\rho=\overline{\rho}_{\mathrm{odd}}$ in \eqref{eq:PDE_weak} gives
\[\langle \overline{\rho}_{\mathrm{odd}},\overline{\rho}_{\mathrm{odd}} \rangle_{\rm P}+\langle\overline{\rho}_{\mathrm{even}},\overline{\rho}_{\mathrm{odd}} \rangle_{\rm P}=\int_{\partial K} \overline{\rho}_{\mathrm{odd}}\,\di\mu=0,\]
where the last equality follows from symmetry. Since the mixed term vanishes, we obtain
\[\langle \overline{\rho}_{\mathrm{odd}},\overline{\rho}_{\mathrm{odd}} \rangle_{\rm P}=0.\]
By the equality cases characterized in Theorem~\ref{thm:Colesanti_Stability}, this implies $\overline{\rho}_{\mathrm{odd}}=0$. Thus, $\overline{\rho}$ is even.
\end{proof}\noindent
Observe that, under the assumptions of Corollary \ref{cor:even}, we can write \eqref{eq:concavity_power_Sobolev} as \[ p(\mu,K)=\inf \{ J(\rho): \rho \in H^{1}(\partial K,\mu) \text{ and }\rho \text{ is even} \}.\] In the notation of \cite{GalynaStability}, $p_s(\mu,K)=p(\mu,K)$ or, in other words, restricting the space of admissible perturbations to even ones in \eqref{eq:concavity_power_Sobolev} is a consequence of the underlying symmetry. 

\medskip

\paragraph{A reformulation of the dimensional Brunn--Minkowski conjecture.} We now rewrite \eqref{eq:PDE_divergence_form}. We use the notation \[ \nabla_{\partial K,\mu}\cdot X = \nabla_{\partial K} \cdot X-\langle \nabla_{\partial K}u,X\rangle \] for the weighted tangential divergence of a vector field $X \in C^1(\R^n,\R^n)$. Notice that, by the divergence theorem on a manifold without boundary, the weighted divergence satisfies
\begin{equation}\label{eq:weighted_div}
    \int_{\partial K} \nabla_{\partial K, \mu} \cdot X \di \mu = 0
\end{equation}
for every $X \in C^1(\R^n,\R^n)$. With this notation, we define on $H^1(\partial K, \mu)$ the operator $\mathcal{L}$ in Theorem \ref{thm:Section_4_main1} by
\begin{equation}\label{def:Boundary_Elliptic_Operator}
\mathcal{L}(\rho)
=
-\nabla_{\partial K,\mu}\cdot({\rm II}^{-1}\nabla_{\partial K}\rho)
-{\rm H}_\mu \rho
+\frac{1}{\mu(K)}\int_{\partial K} \rho\di\mu.
\end{equation}
Thus, the function $\overline{\rho}$ is a weak solution of $\mathcal{L}(\overline{\rho})=1$. Notice that, if $K$ is smooth, then the operator $\mathcal{L}$ is uniformly elliptic since ${\rm II}^{-1}$ is positive definite. Moreover, for every $\rho \in H^1(\partial K, \mu)$ a quick computation shows that
\begin{equation}\label{eq:P_L_identity}
    \int_{\partial K} \rho \mathcal{L}(\rho) \di \mu = \langle \rho, \rho \rangle_{\rm P}.
\end{equation}

For a fixed even measure $\mu$ with smooth and strictly convex potential, \eqref{eq:dim_BM} is equivalent to proving that, for every smooth origin-symmetric convex body $K \in \K^n$, \[ p(\mu,K) \geq \frac{1}{n}.\] We now provide a reformulation of this statement in which, surprisingly, the bilinear form \eqref{eq:I} from \eqref{eq:multiplicative_form} appears. As we shall see in the proof, the interaction between $\mathcal{L}$ and the support function $h_K$ plays a central role. 
\begin{theorem}\label{thm:reformulation}
    Let $K$ and $\mu$ satisfy the assumptions of Theorem~\ref{thm:multiplicative_form}. In particular, $\di \mu(x)=e^{-u(x)}\di x$ for a smooth and strictly convex function $u$. Then, 
    \begin{equation}\label{eq:equivalent_dim_BM}
        p(\mu,K) \geq \frac{1}{n} \Longleftrightarrow \langle \rhobar,\langle \nabla u, x \rangle \rangle_{\rm I}+\int_K \langle \nabla u, x \rangle \di \mu \geq 0,
    \end{equation}
    where $\rhobar$ is the unique solution of $\mathcal{L}(\rho)=1$ on $\partial K$.
\end{theorem}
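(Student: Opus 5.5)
The plan is to test the weak Euler--Lagrange equation \eqref{eq:PDE_weak} against the support function of $K$, read on the boundary, $h:=h_K(\nu_K)=\langle x,\nu_K(x)\rangle$. This $h$ is the infinitesimal generator of dilations, so it should produce the quantity $n\mu(K)$. By Theorem~\ref{thm:Section_4_main1}, $p(\mu,K)=\mu(K)/\int_{\partial K}\rhobar\,\di\mu$. Moreover, $\int_{\partial K}\rhobar\,\di\mu=\langle\rhobar,\rhobar\rangle_{\rm P}>0$, by \eqref{eq:PDE_weak} together with Lemma~\ref{lem:poincare_coercivity} (note $\rhobar\neq 0$, since $l\neq0$). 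Hence
\[
p(\mu,K)\ge \tfrac1n \iff n\mu(K)-\int_{\partial K}\rhobar\,\di\mu\ge 0 .
\]
The goal is therefore the identity
\[
n\mu(K)-\int_{\partial K}\rhobar\,\di\mu=\langle \rhobar,\langle\nabla u,x\rangle\rangle_{\rm I}+\int_K\langle\nabla u,x\rangle\,\di\mu .
\]

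First, since $\partial K$ is $C^2$, $h\in C^1(\partial K)\subset H^1(\partial K,\mu)$, so \eqref{eq:PDE_weak} gives $\langle\rhobar,h\rangle_{\rm P}=\int_{\partial K}h\,\di\mu$. The divergence theorem applied to the field $xe^{-u}$ on $K$ then gives
\[
\int_{\partial K}h\,\di\mu=\int_K \nabla\cdot(xe^{-u})\,\di x=n\mu(K)-\int_K\langle\nabla u,x\rangle\,\di\mu .
\]

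Second, I compute $\langle\rhobar,h\rangle_{\rm P}$ by moving all derivatives onto $h$, which amounts to computing $\mathcal L(h)$. The key geometric facts are:
\begin{itemize}
\item $\nabla_{\partial K}h={\rm II}\,x^T$, where $x^T$ is the tangential part of $x$, so that ${\rm II}^{-1}\nabla_{\partial K}h=x^T$;
\item $\nabla_{\partial K}\cdot x^T=(n-1)-\tra({\rm II})\,h$, which follows from $\nabla_{\partial K}\cdot x=n-1$ and the splitting $x=x^T+h\nu_K$.
\end{itemize}
From these,
\[
-\nabla_{\partial K,\mu}\cdot x^T-{\rm H}_\mu h=-(n-1)+\tra({\rm II})h+\langle\nabla u,x^T\rangle-\tra({\rm II})h+\langle\nabla u,\nu_K\rangle h=-(n-1)+\langle\nabla u,x\rangle .
\]
Adding the mean term and inserting the first step yields
\[
\mathcal L(h)=1+\langle\nabla u,x\rangle-\frac1{\mu(K)}\int_K\langle\nabla u,x\rangle\,\di\mu .
\]
Pairing with $\rhobar$ gives $\langle\rhobar,h\rangle_{\rm P}=\int_{\partial K}\rhobar\,\mathcal L(h)\,\di\mu=\int_{\partial K}\rhobar\,\di\mu+\langle\rhobar,\langle\nabla u,x\rangle\rangle_{\rm I}$, directly from definition \eqref{eq:I}. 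Equating this with the first step gives the desired identity, and hence \eqref{eq:equivalent_dim_BM}.

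The main obstacle is regularity. Since $\partial K$ is only $C^2$, $x^T={\rm II}^{-1}\nabla_{\partial K}h$ is merely $C^1$ at best, so $\mathcal L(h)$ must be understood through the integration by parts $\int_{\partial K}\langle{\rm II}^{-1}\nabla_{\partial K}\sigma,\nabla_{\partial K}h\rangle\,\di\mu=\int_{\partial K}\langle\nabla_{\partial K}\sigma,x^T\rangle\,\di\mu=-\int_{\partial K}\sigma\,\nabla_{\partial K,\mu}\cdot x^T\,\di\mu$. I would do this for smooth $\sigma$, which requires only that $x^T$ is $C^1$ (true, since $\nu_K\in C^1$). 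I would then pass to $\sigma=\rhobar$ by density of $C^\infty(\partial K)$ in $H^1(\partial K,\mu)$, using the continuity of $\langle\cdot,\cdot\rangle_{\rm P}$ from Lemma~\ref{lem:continuity_both_forms} and the continuity of $\langle\cdot,\cdot\rangle_{\rm I}$ in its first argument. If needed, I would also approximate $K$ by smoother bodies, although the argument above does not appear to require it.
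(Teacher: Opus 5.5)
Your proposal is correct and follows essentially the same route as the paper: the divergence-theorem identity for $\int_{\partial K} h_K(\nu_K)\,\di\mu$, the computation $\mathcal{L}(h_K(\nu_K)) = 1+\langle\nabla u,x\rangle - \frac{1}{\mu(K)}\int_K\langle\nabla u,x\rangle\,\di\mu$, and then pairing with $\rhobar$. The paper phrases the pairing via the symmetry of $\mathcal{L}$, which is exactly your testing of the weak equation against $h_K(\nu_K)$; your explicit check that $\int_{\partial K}\rhobar\,\di\mu=\langle\rhobar,\rhobar\rangle_{\rm P}>0$, and your weak-sense justification of the integration by parts, are welcome details that the paper leaves implicit.
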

\begin{proof}
We start by observing that 
\begin{equation}\label{eq:integral_support}
    \int_{\partial K} h_{K}(\nu_{K})\,\di \mu
= n\,\mu(K) -\int_{K}\langle \nabla u,x\rangle \,\di \mu.
\end{equation}
Similar statements have appeared in the literature. We provide a proof for the convenience of the reader. Indeed, for $x\in \partial K$ one has $h_{K}(\nu_{K}(x))=\langle x,\nu_{K}(x)\rangle$. The divergence theorem yields
\begin{align*}
&\int_{\partial K}h_{K}(\nu_{K})\,\di\mu
=\int_{\partial K} \langle x, \nu_{K}\rangle e^{-u}\,\di x=
\int_{K} \nabla \cdot \left(x e^{-u}\right)\,\di x \\
=&\int_{K} n\,\di\mu -\int_{K}\langle \nabla u,x\rangle\,\di \mu 
=n\mu(K)-\int_{K}\langle \nabla u(x),x\rangle\,\di \mu,
\end{align*}
proving \eqref{eq:integral_support}.

Next, we claim that
\begin{equation}\label{eq:support_calculation}
\mathcal{L}\big(h_{K}(\nu_{K})\big) = 1+\langle \nabla u,x\rangle -\frac{1}{\mu(K)}\int_{K} \langle \nabla u, x\rangle \,\di \mu.
\end{equation}
First, observe that, by the properties of the support function
\begin{align*}
    \nabla_{\partial K} \cdot \left( {\rm II}^{-1} \nabla_{\partial K}(h_K(\nu_K))\right)= & \nabla_{\partial K}\cdot \left( \nabla_{\sph^{n-1}}h_K(\nu_K)\right)\\
    = \tra\left( \Id -h_K(\nu_K){\rm II}\right)= &(n-1)-h_K(\nu_K)\tra({\rm II}).
\end{align*}
Therefore,
\begin{equation}\label{eq:weighted_part}
\nabla_{\partial K, \mu} \cdot \left({\rm II}^{-1}\nabla_{\partial K}\big(h_{K}(\nu_{K})\big)\right) =(n-1)-h_K(\nu_K)\tra({\rm II})
-\langle \nabla_{\partial K}u, x\rangle .
\end{equation}
Combining \eqref{eq:weighted_part} with the definition of $\mathcal{L}$, the definition of ${\rm H}_\mu$, and \eqref{eq:integral_support}, we obtain
\begin{align*}
\mathcal{L}\big(h_{K}(\nu_{K}(x))\big)
&=
1+\langle \nabla_{\partial K}u(x), x\rangle
+\langle \nabla u(x), \nu_{K}(x)\rangle\,h_{K}(\nu_{K}(x))
-\frac{1}{\mu(K)} \int_{K} \langle \nabla u(x), x\rangle \,\di \mu(x) \\
&=
1+\langle \nabla u(x), x\rangle
-\frac{1}{\mu(K)}\int_{K} \langle \nabla u(x), x\rangle \,\di \mu(x),
\end{align*}
and \eqref{eq:support_calculation} holds. 

Observe now that, analogously to \eqref{eq:weighted_div}, the operator $\mathcal{L}$ is symmetric in the sense that for any sufficiently regular functions $\rho,\sigma: \partial K \to \R$,
\[
\int_{\partial K} \rho\mathcal{L}(\sigma)\di \mu
=
\int_{\partial K} \mathcal{L}(\rho)\,\sigma\,\di \mu.
\]
Using this property together with the fact that $\mathcal{L}(\overline{\rho})=1$ in the weak sense, we obtain
\begin{align*}
\int_{\partial K}h_{K}(\nu_{K})\,\di\mu
-
\int_{\partial K}\overline{\rho}\,\di \mu
=
\int_{\partial K} \big(h_{K}(\nu_{K})-\overline{\rho}\big)\,\mathcal{L}(\overline{\rho})\,\di \mu 
=
\int_{\partial K} \mathcal{L}\big(h_{K}(\nu_{K})-\overline{\rho}\big)\,\overline{\rho}\,\di \mu.
\end{align*}
A direct application of \eqref{eq:support_calculation} yields
\begin{equation} \label{eq:thm_dim_ref}
    \int_{\partial K}h_{K}(\nu_{K})\,\di\mu- \int_{\partial K} \overline{\rho}\,\di \mu = \langle \overline{\rho}, \langle \nabla u , x \rangle \rangle_{\rm I}.
\end{equation}

Finally, by \eqref{eq:new_concavity expression} and \eqref{eq:integral_support}, $p(\mu,K)\geq 1/n$ is equivalent to
\[n\mu(K) \geq \int_{\partial K}\rhobar \di \mu  \Leftrightarrow \int_{\partial K} h_K(\nu_K) \di \mu +\int_{K} \langle \nabla u,x \rangle \di \mu \geq \int_{\partial K} \rhobar \di \mu,\]
which, in turn, by \eqref{eq:thm_dim_ref} is equivalent to \[\langle \rhobar,\langle \nabla u, x \rangle \rangle_{\rm I}+\int_K \langle \nabla u, x \rangle \di \mu \geq 0,\] concluding the proof.
\end{proof}

We conclude this section with a further remark on the role of $\mathcal{L}$ and $h_K$. Consider indeed \eqref{eq:support_calculation} in the case $u\equiv 0$. Notice that the operator $\mathcal{L}$ is still well-defined. Then, immediate computations show that \[ \mathcal{L}(h_K)=1. \] This is equivalent to the fact that in Colesanti's version of \eqref{eq:Poincare} (that is, when $\mu$ is the Lebesgue measure), the equality case is given by homotheties of $K$. Moreover, denoting by $\vol_n$ the standard Lebesgue measure on $\R^n$, it is well known that \[ \int_{\partial K} h_K(\nu_K)\di \haus^{n-1}=n\vol_n(K),\]
which encodes the dimensional concavity of convex sets, given by the Brunn--Minkowski inequality. In this sense, for a sufficiently smooth log-concave measure $\mu$, the corresponding function $\rhobar$ may be viewed as a weighted analogue of the support function of $K$.

\section{Symmetric case and Hessian pinching}

In this final section, we shall prove Theorem \ref{thm:SG_intro}. To do so, we specialize to the case where the convex body $K$ is origin-symmetric and the measure $\mu$ is even. First we report the following lemma, which is a direct consequence of \cite[Lemma 5.1]{Kol_Liv_Gardner-Zvavitch}.
\begin{lemma}\label{lem:radial_bound}
Consider an origin-symmetric smooth convex body $K \subset \R^n$ and an even function $u \in C^2(\R^n)$ such that
\[
k_{1}\,\mathrm{Id} \leq \nabla^{2}u
\qquad\text{and}\qquad
\Delta u=\tra(\nabla^{2}u)\leq k_{2}n.
\]
Then, if $\mu$ is such that $\di\mu(x)=e^{-u(x)}\di x$,
\begin{equation}\label{eq:measure_moment_estimate}
\frac{1}{\mu(K)}\int_{K}
\langle (\nabla^{2}u)^{-1}\nabla u,\nabla u\rangle\,\di\mu
\leq
n\frac{k_{2}}{k_{1}}.
\end{equation}
\end{lemma}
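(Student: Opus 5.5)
The plan is to reduce the weighted quantity to the plain Dirichlet-type integral $\int_K\|\nabla u\|^2\,\di\mu$. We then control that integral by applying the Brascamp--Lieb inequality \eqref{eq:Brascamp_Lieb} to the partial derivatives of $u$. Symmetry removes the mean terms, so no boundary analysis is needed.

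\emph{Step 1: reduce to $\|\nabla u\|^2$.} Since $k_1\Id\leq\nabla^2u$, we have $(\nabla^2u)^{-1}\leq k_1^{-1}\Id$ pointwise. Hence
\[
\int_K\langle(\nabla^2u)^{-1}\nabla u,\nabla u\rangle\,\di\mu\leq \frac{1}{k_1}\int_K\|\nabla u\|^2\,\di\mu ,
\]
and it suffices to prove $\int_K\|\nabla u\|^2\,\di\mu\leq k_2 n\,\mu(K)$.

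\emph{Step 2: Brascamp--Lieb on coordinate derivatives.} For each $i=1,\dots,n$ we apply \eqref{eq:Brascamp_Lieb} on $K$ with $\varphi=\partial_i u$. This function is $C^1$, hence locally Lipschitz. Its gradient is $\nabla\varphi=\nabla^2u\,e_i$, so the right-hand side of \eqref{eq:Brascamp_Lieb} is
\[
\int_K\langle(\nabla^2u)^{-1}\nabla^2u\,e_i,\nabla^2u\,e_i\rangle\,\di\mu=\int_K\partial_{ii}u\,\di\mu .
\]
Since $u$ is even, $\partial_i u$ is odd. Since $K$ is origin-symmetric and $\mu$ is even, it follows that $\int_K\partial_iu\,\di\mu=0$, so the mean term in \eqref{eq:Brascamp_Lieb} vanishes. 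Therefore
\[
\int_K(\partial_iu)^2\,\di\mu\leq\int_K\partial_{ii}u\,\di\mu .
\]

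\emph{Step 3: sum over $i$.} Summing the inequalities of Step 2 gives
\[
\int_K\|\nabla u\|^2\,\di\mu\leq\int_K\Delta u\,\di\mu\leq k_2 n\,\mu(K).
\]
Combining this with Step 1 and dividing by $\mu(K)$ yields \eqref{eq:measure_moment_estimate}.

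\emph{Main obstacle.} The natural first attempt is to integrate $\|\nabla u\|^2e^{-u}$ by parts. That produces the boundary term $-\int_{\partial K}\langle\nabla u,\nu_K\rangle\,\di\mu$, whose sign is not controlled for anisotropic $u$. The Brascamp--Lieb route sidesteps this entirely, and the only point needing care is the vanishing of the means, which is exactly where the symmetry hypotheses are used. One also has to check that \eqref{eq:Brascamp_Lieb} only needs $\nabla^2u>0$, which holds here because $\nabla^2u\geq k_1\Id$; an upper Hessian bound is not required.
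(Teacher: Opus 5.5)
Your proof is correct. The paper does not prove this lemma itself; it only records it as a direct consequence of \cite[Lemma 5.1]{Kol_Liv_Gardner-Zvavitch}. So the comparison is between an external citation and your self-contained argument.

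Your Steps 2--3 derive the estimate $\int_K\|\nabla u\|^2\,\di\mu\le\int_K\Delta u\,\di\mu$, for origin-symmetric $K$ and even $u$, directly from \eqref{eq:Brascamp_Lieb}. The paper already has that inequality at hand: it is the case $\rho\equiv 0$ of Theorem~\ref{thm:mean_form}. After that, Step 1 and the trace bound $\Delta u\le k_2 n$ finish in one line.

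The citation route is shorter. Yours makes clear where each hypothesis enters:
\begin{itemize}
\item positivity of $\nabla^2u$ is used only to invoke Brascamp--Lieb;
\item symmetry of $K$ and evenness of $u$ are used only to kill the means $\int_K\partial_iu\,\di\mu$;
\item the lower bound $k_1$ enters only in Step 1;
\item neither an upper Hessian bound nor any smoothness of $\partial K$ is needed.
\end{itemize}

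Your argument also answers the obstacle you describe at the end. Combine it with the integration-by-parts identity $\int_K\|\nabla u\|^2\,\di\mu=\int_K\Delta u\,\di\mu-\int_{\partial K}\langle\nabla u,\nu_K\rangle\,\di\mu$. This shows that $\int_{\partial K}\langle\nabla u,\nu_K\rangle\,\di\mu\ge 0$ for every origin-symmetric convex body $K$ and every even $u$ with $\nabla^2u>0$. This holds even though the integrand $\langle\nabla u,\nu_K\rangle$ need not be pointwise non-negative.
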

Combining Theorem~\ref{thm:mean_form} with \eqref{eq:thm_dim_ref}, we obtain the following general bound.
\begin{proposition}\label{prop:concavity_general_bound}
Suppose that $K$ and $\mu$ satisfy the assumptions of Theorem~\ref{thm:multiplicative_form}. Then
\[
\frac{1}{p(\mu,K)}
\leq n+\frac{1}{\mu(K)}\int_{K}\langle (\nabla^{2}u)^{-1}\nabla u,\nabla u\rangle\,\di\mu.
\]
\end{proposition}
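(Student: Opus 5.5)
The plan is to combine the identity \eqref{eq:thm_dim_ref} with the multiplicative inequality \eqref{eq:multiplicative_form}, tested at $\rho=\rhobar$. I have not checked that the resulting quadratic bound closes to exactly the stated constant, so the last step is a plan rather than a finished estimate.

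Write $\psi(x)=\langle\nabla u(x),x\rangle$ and set
\[
X=\frac{1}{\mu(K)}\int_{\partial K}\rhobar\,\di\mu,\qquad a=\frac{1}{\mu(K)}\int_K\psi\,\di\mu,\qquad m=\frac{1}{\mu(K)}\int_K\langle(\nabla^2u)^{-1}\nabla u,\nabla u\rangle\,\di\mu .
\]
By Theorem~\ref{thm:Section_4_main1}, $1/p(\mu,K)=X$, so the goal is $X\le n+m$.

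\textbf{Step 1: an identity for $X$.} Combining \eqref{eq:thm_dim_ref} with \eqref{eq:integral_support} gives
\[
\mu(K)X=n\mu(K)-\mu(K)a-\langle\rhobar,\psi\rangle_{\rm I}.
\]
Since $\langle\rhobar,\cdot\rangle_{\rm I}$ depends only on the trace on $\partial K$ and the $\mu$-mean over $K$, I may replace $\psi$ by any $\varphi\in H^1(K,\mu)$ with $\varphi|_{\partial K}=\psi|_{\partial K}$ and $\int_K\varphi\,\di\mu=\int_K\psi\,\di\mu$.

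\textbf{Step 2: bound the interaction term.} I use the variant of \eqref{eq:multiplicative_form} extended to Sobolev spaces via Lemma~\ref{lem:continuity_both_forms}, applied with $-\rhobar$. Since testing \eqref{eq:PDE_weak} with $\rhobar$ gives $\langle\rhobar,\rhobar\rangle_{\rm P}=\mu(K)X$, this yields
\[
-\langle\rhobar,\varphi\rangle_{\rm I}\le \langle\rhobar,\rhobar\rangle_{\rm P}^{1/2}\langle\varphi,\varphi\rangle_{\rm BL}^{1/2}=\mu(K)\sqrt{X\,e},\qquad e=\frac{\langle\varphi,\varphi\rangle_{\rm BL}}{\mu(K)} .
\]
The equivalent additive route is Theorem~\ref{thm:mean_form} with the weights $t$ and $t^{-1}$, optimized over $t$. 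Either way, Step 1 becomes a quadratic inequality in $\sqrt X$:
\[
X\le (n-a)+\sqrt{X e}, \qquad\text{hence}\qquad X\le (n-a)+\tfrac e2+\sqrt{\tfrac{e^2}{4}+(n-a)e}.
\]

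\textbf{Step 3: choose $\varphi$ and compute $e$ (main obstacle).} The task is to pick $\varphi$ so that this right-hand side is at most $n+m$. The Brascamp--Lieb energy of the potential itself is exactly the target, since $\langle(\nabla^2u)^{-1}\nabla u,\nabla u\rangle$ is the integrand of $\int_K\langle(\nabla^2u)^{-1}\nabla\varphi,\nabla\varphi\rangle\,\di\mu$ for $\varphi=u$. So I would first try $\varphi=\psi$ itself. For this choice
\[
\langle(\nabla^2u)^{-1}\nabla\psi,\nabla\psi\rangle=\langle(\nabla^2u)^{-1}\nabla u,\nabla u\rangle+2\psi+\langle\nabla^2u\,x,x\rangle ,
\]
and the variance term in \eqref{eq:BL} is subtracted. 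The cross term $2\psi$ produces $2a$, which is meant to cancel the $-a$ in Step 1 once the square root is handled.

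The term $\langle\nabla^2u\,x,x\rangle$ has to be absorbed. I would do this with the boundary identity obtained by applying the divergence theorem to $\psi x e^{-u}$, in the spirit of \eqref{eq:integral_support}, together with the Brascamp--Lieb variance term. If this bookkeeping does not close, the fallback is to modify $\psi$ inside $K$ by a function vanishing on $\partial K$, chosen to minimize the BL energy. Such a modification is allowed by Step 1, because it can be made to keep the $\mu$-mean over $K$ fixed. The extra degree of freedom would be used to trade $\langle\nabla^2u\,x,x\rangle$ against $\psi$, giving $e\le m-a+\dots$, which is what makes Step 2 yield $X\le n+m$.
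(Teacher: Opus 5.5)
\textbf{There is a genuine gap.} Your Steps 1 and 2 are correct. The trouble is that the scheme cannot reach the stated constant, whatever $\varphi$ you choose in Step 3, and the fallback does not help either.

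\textbf{Why the square-root route fails.} Solving your quadratic, the final bound is at most $n+m$ if and only if $e\le (m+a)^2/(n+m)$. Fix $K$ and take $u(x)=\frac{\epsilon}{2}\|x\|^2$ with $\epsilon\to0^+$. Then $m=a=\epsilon\,\overline{\|x\|^2}$, where the bar denotes the normalized $\mu$-average over $K$. So you would need $e=O(\epsilon^2)$.

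Every admissible $\varphi$ has the form $\epsilon\phi$, with $\phi|_{\partial K}=\|x\|^2$ and prescribed $\mu$-mean. Since $(\nabla^2u)^{-1}=\epsilon^{-1}\Id$, the Poincar\'e inequality on $K$ for the variance term gives $\langle\varphi,\varphi\rangle_{\rm BL}\ge(\epsilon-C\epsilon^2)\int_K\|\nabla\phi\|^2\di\mu$. No constant is admissible: if $K$ is a centred ball, the trace is constant but differs from the mean of $\|x\|^2$. Hence the Dirichlet energy is bounded below by some $c_K>0$ uniformly in small $\epsilon$, and so $e\ge c\,\epsilon$.

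Your method therefore yields at best $X\le n+c'\sqrt{\epsilon}$, while the claim is $X\le n+O(\epsilon)$. Your first choice $\varphi=\psi$ is worse: it gives $e=4a-O(\epsilon^2)$ and only $X\le n+2\sqrt{na}+O(a)$.

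The loss comes from testing \eqref{eq:multiplicative_form} at $\rho=\rhobar$. Put $f=\rhobar-h_K(\nu_K)$ and $g=\langle\nabla u,x\rangle$. Then \eqref{eq:support_calculation} gives $\langle f,\sigma\rangle_{\rm P}=-\langle \sigma,g\rangle_{\rm I}$ for all $\sigma$. Hence
\[
-\langle\rhobar,g\rangle_{\rm I}=\langle f,h_K(\nu_K)\rangle_{\rm P}+\langle f,f\rangle_{\rm P}.
\]
Your Cauchy--Schwarz step replaces this by $\langle\rhobar,\rhobar\rangle_{\rm P}^{1/2}\langle\varphi,\varphi\rangle_{\rm BL}^{1/2}$. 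That throws away the exactly computable $h_K$-part. It also pairs the order-one factor $\langle\rhobar,\rhobar\rangle_{\rm P}^{1/2}\approx (n\mu(K))^{1/2}$ against a small BL energy.

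\textbf{The missing idea} is to estimate only the $f$-part. Since $\langle f,f\rangle_{\rm P}=-\langle f,g\rangle_{\rm I}$, Theorem~\ref{thm:mean_form} applied to $(f,-g)$ gives $\langle f,f\rangle_{\rm P}\le\langle g,g\rangle_{\rm BL}$, that is, $\langle f,g\rangle_{\rm I}\ge-\langle g,g\rangle_{\rm BL}$. This bound is linear in the BL energy of $g$ itself, with no modification inside $K$.

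From \eqref{eq:thm_dim_ref} and \eqref{eq:integral_support},
\[
n\mu(K)-\int_{\partial K}\rhobar\di\mu=\int_K g\di\mu+\langle h_K(\nu_K),g\rangle_{\rm I}+\langle f,g\rangle_{\rm I}.
\]
The divergence theorem applied to $g\,x\,e^{-u}$ (the identity you had in mind) gives exactly
\[
\langle h_K(\nu_K),g\rangle_{\rm I}=\int_K\langle\nabla^2u\,x,x\rangle\di\mu+\int_K g\di\mu-\int_K g^2\di\mu+\frac{1}{\mu(K)}\Bigl(\int_K g\di\mu\Bigr)^2 .
\]
Combine this with your expansion of $\langle g,g\rangle_{\rm BL}$. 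The Hessian, cross and variance terms cancel linearly, leaving $n\mu(K)-\int_{\partial K}\rhobar\di\mu\ge-\int_K\langle(\nabla^2u)^{-1}\nabla u,\nabla u\rangle\di\mu$, which is the claim. These linear cancellations are precisely what the square root in your Step 2 destroys.
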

\begin{proof}
Let $g(x)=\langle \nabla u(x),x\rangle$ and $f(x)=\overline{\rho}(x)-h_{K}(\nu_{K}(x))$. By \eqref{eq:thm_dim_ref} we infer that
\begin{align*}
\mathcal{L}(f) = \mathcal{L}(\overline{\rho}-h_{K}(\nu_K)) = -\langle \nabla u,x\rangle +\int_{ K} \langle \nabla u,x \rangle \di \mu.
\end{align*}
Multiplying both sides by $f$ and integrating on $\partial K$ with respect to $\mu$, by \eqref{eq:P_L_identity} we deduce $\langle f,f\rangle_{{\rm P}} = -\langle g,f\rangle_{\rm I}$. Now, the choice $\rho = f$ and $\varphi = -g$ in Theorem~\ref{thm:mean_form} yields
\begin{equation}\label{eq:5.2.1}
\langle f,f\rangle_{{\rm P}} = -\langle g,f\rangle_{\rm I} \geq -\frac{\langle f,f\rangle_{{\rm P}}+\langle g,g\rangle_{{\rm BL}}}{2} \implies -\langle f,f\rangle_{{\rm P}} = \langle g,f\rangle_{\rm I} \geq -\langle g,g\rangle_{{\rm BL}}.
\end{equation}
Furthermore, notice that through an integration by parts and using \eqref{eq:integral_support}
\begin{align}\label{eq:5.2.2}
    \notag \langle f,h_{K}(\nu_K)\rangle_{{\rm I}} &= \int_{\partial K} \langle \nabla u,x \rangle h_{K}(\nu_{K}) \di \mu-\frac{1}{\mu(K)} \int_{\partial K}h_{K}(\nu_{K})\di \mu\int_{K}\langle \nabla u,x \rangle \di \mu \notag \\
    \notag &= n\int_{K}\langle \nabla u,x \rangle \di \mu -\int_{K}\langle \nabla u,x \rangle^2 \di \mu+\int_{K}\langle (\nabla^2u)x,x\rangle \di \mu+ \int_{K}\langle \nabla u,x \rangle \di \mu\\ \notag &-n\int_{K}\langle \nabla u,x \rangle\di \mu +\frac{1}{\mu(K)}\left(\int_{K} \langle \nabla u, x \rangle\right)^2 \di \mu\notag \\
    &= \int_{K}\langle (\nabla^2u)x,x\rangle \di \mu+\int_{K}\langle \nabla u,x\rangle\di \mu - \int_{K}\langle \nabla u,x \rangle^2\di \mu+\frac{1}{\mu(K)}\left(\int_{K} \langle \nabla u, x \rangle \di \mu\right)^2.
\end{align}
Combining \eqref{eq:5.2.1} and\eqref{eq:5.2.2} with \eqref{eq:thm_dim_ref}, we infer 
\begin{align}
    \notag n\mu(K) - \int_{\partial K }\rhobar \di \mu&= \int_{K}\langle \nabla u, x \rangle \di \mu+ \langle g,f\rangle_{{\rm I}}+\langle g,h_{K}\rangle_{{\rm I}} \geq \int_{K}\langle \nabla u, x \rangle \di \mu- \langle g,g\rangle_{{\rm BL}}+\langle g,h_{K}\rangle_{{\rm I}} \\ \notag
    &= \int_{K}\langle (\nabla^2u)x,x\rangle \di \mu+2\int_{K}\langle \nabla u,x\rangle \di \mu- \int_{K}\langle \nabla u,x \rangle^2\di \mu+\frac{1}{\mu(K)}\left(\int_{K} \langle \nabla u, x \rangle \di \mu\right)^2 \\
    \notag &-\int_{K}\langle (\nabla^2u)x,x\rangle\di \mu-2\int_{K}\langle \nabla u,x\rangle\di \mu+\int_{K}\langle \nabla u,x \rangle^2\di \mu-\frac{1}{\mu(K)}\left(\int_{K} \langle \nabla u, x \rangle \di \mu\right)^2 \\
    &-\int_{K}\langle (\nabla^{2}u)^{-1}\nabla u,\nabla u\rangle \di \mu. \notag
\end{align}
That is, \[ \int_{\partial K }\rhobar \di \mu \leq n \mu(K) + \int_{K}\langle (\nabla^{2}u)^{-1}\nabla u,\nabla u\rangle \di \mu, \]
and the proof is concluded by recalling that $\frac{1}{p(\mu,K)} = \frac{1}{\mu(K)}\int_{\partial K} \rhobar \di \mu$.
\end{proof}

We remark that the above estimate cannot lead to a resolution of the dimensional Brunn--Minkowski conjecture, since even applying the main inequality from Theorem~\ref{thm:multiplicative_form} already yields a bound strictly larger than $n$. Nevertheless, the estimate holds for an arbitrary convex body $K$, which need not contain the origin, and for an arbitrary smooth log-concave measure $\mu=e^{-u}$. In this general setting, we obtain control of the concavity power in terms of the quantity
\[
 \frac{1}{\mu(K)}\int_{K}(\nabla^{2}u)^{-1}\nabla u,\nabla u\rangle\di\mu.
\]
Assuming, in addition, that $K$ is symmetric and that the Hessian of $u$ is pinched, we can finally prove Theorem \ref{thm:SG_intro}. We repeat the statement for the convenience of the reader.
\begin{theorem}
Consider an origin-symmetric smooth convex body $K \subset \R^n$ and an even strictly convex function $u \in C^2(\R^n)$ such that
\[
k_{1}\,\mathrm{Id} \leq \nabla^{2}u \,\,\text{ and } \,\,  \Delta u \leq k_{2}\,n
\]
for some $0 < k_{1} \leq k_{2}$. If $\mu$ is such that $\di\mu(x)=e^{-u(x)}\di x$, setting $r=\frac{k_{2}}{k_{1}}$ and $c=\frac{1}{r+1}$ we have
\begin{equation}\label{eq:radial_estimate}
p(\mu,K) \geq \frac{c}{n}.
\end{equation}
In particular, 
\[ \mu((1-t)K+tL)^{\frac{c}{n}}\geq (1-t)\mu(K)^{\frac{c}{n}}+t\mu(L)^{\frac{c}{n}}\] for every origin-symmetric $K, L \in \K^n$ and $t \in [0,1]$
\end{theorem}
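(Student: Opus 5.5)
The plan is to combine Proposition~\ref{prop:concavity_general_bound} with Lemma~\ref{lem:radial_bound}, and then pass from the local bound to the global one by the local-to-global principle recalled in Section~4.

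First, by Proposition~\ref{prop:concavity_general_bound}, which holds for any smooth strictly convex $K$ and smooth strictly convex potential $u$,
\[
\frac{1}{p(\mu,K)} \leq n + \frac{1}{\mu(K)}\int_K \langle (\nabla^2 u)^{-1}\nabla u, \nabla u\rangle \di\mu .
\]
Second, under the symmetry and pinching hypotheses, Lemma~\ref{lem:radial_bound} bounds the last average by $n k_2/k_1 = nr$. Therefore $1/p(\mu,K) \leq n(1+r)$, that is, $p(\mu,K)\geq \frac{1}{n(r+1)} = \frac{c}{n}$, which is \eqref{eq:radial_estimate}.

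There is a technical point. Proposition~\ref{prop:concavity_general_bound} and the machinery behind it (Lax--Milgram, Theorem~\ref{thm:Section_4_main1}) were set up for smooth strictly convex $K$ and $u\in C^2$ with positive definite Hessian. Here $\nabla^2u\geq k_1\Id>0$ holds globally, so only local boundedness of the Hessian on a neighborhood of $K$ is needed, and that is all the Section~3 estimates use. The derivation of the Poincaré form via Proposition~\ref{prop:explicit_derivatives} assumed the upper bound $\nabla^2 u\le k_2'\Id$ on all of $\R^n$. I would remove this by a standard approximation: modify $u$ outside a large ball, or use the fact that the inequality only involves integrals over $K$ and $\partial K$. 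I would also normalize $\mu$ to a probability measure if needed; $p(\mu,K)$ is unchanged by scaling $\mu$, since $J$ is $0$-homogeneous in $\mu$.

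For the global statement, I would use $p_s(\mu)=\inf_K p(\mu,K)$ over smooth strictly convex origin-symmetric $K$. By Corollary~\ref{cor:even}, the symmetric concavity power coincides with $p(\mu,K)$. This gives the inequality with exponent $c/n$ for smooth strictly convex origin-symmetric $K,L$. I would then extend it to arbitrary origin-symmetric $K,L\in\K^n$ by approximation in the Hausdorff metric, using continuity of $\mu$ along such approximations, since $\mu$ is absolutely continuous. The main obstacle I expect is making the local-to-global step rigorous. It requires that $t\mapsto\mu((1-t)K+tL)^{c/n}$ has nonpositive second derivative at every intermediate body $(1-t)K+tL$, which is again smooth, strictly convex, and origin-symmetric. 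That is exactly what $p(\mu,K_t)\ge c/n$ for all such $K_t$ provides, via \cite{GalynaStability}.
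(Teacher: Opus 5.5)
Your proposal is correct and is the paper's argument: Proposition~\ref{prop:concavity_general_bound} combined with Lemma~\ref{lem:radial_bound} gives $1/p(\mu,K)\le n+nr=n(r+1)$, that is, $p(\mu,K)\ge c/n$. Your extra remarks on the approximation and on the local-to-global step are consistent with what the paper takes for granted; note that you do not need Corollary~\ref{cor:even}, since restricting to even perturbations can only increase the concavity power, so a lower bound on $p(\mu,K)$ already bounds its symmetric version.
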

\begin{proof}
The proof is a straightforward application of Proposition~\ref{prop:concavity_general_bound} together with Lemma~\ref{lem:radial_bound}.
\end{proof}

\bigskip

\paragraph{Acknowledgments.} The first-named author was supported by the Austrian Science Fund (FWF): 10.55776/P36344N. The second-named author was supported, in part, by the Austrian Science Fund (FWF): 10.55776/PAT3787224.

\footnotesize
\bibliography{references}
\bibliographystyle{siam}

\parbox[t]{8.5cm}{
Sotiris Armeniakos\\
Institut f\"ur Stochastik und Wirtschaftsmathematik\\
TU Wien\\
Wiedner Hauptstra{\ss}e 8-10/1046\\
1040 Wien, Austria\\
e-mail: sotirios.armeniakos@tuwien.ac.at}

\medskip

\parbox[t]{8.5cm}{
Jacopo Ulivelli\\
Institut f\"ur Diskrete Mathematik und Geometrie\\
TU Wien\\
Wiedner Hauptstra{\ss}e 8-10/1046\\
1040 Wien, Austria\\
e-mail: jacopo.ulivelli@tuwien.ac.at}
\end{document}